\DeclareMathOperator{\Prb}{\mathbf{P}}
\DeclareMathOperator{\Mean}{\mathbf{E}}
\DeclareMathOperator{\Law}{Law}
\DeclareMathOperator{\argmin}{argmin}
\numberwithin{equation}{section}
\begin{document}

% Befehls- und Umgebungsdefinitionen

% Auslassungszeichen
\newcommand{\auslass}{(\ldots\hspace{-0.2em})}

% T fuer Matrizentransposition
\newcommand{\trans}[1]{{#1}^\mathsf{T}}

\newcommand{\BB}[1]{\boldsymbol{#1}}
\newcommand{\prbms}[2][]{\mathcal{P}_{#1}(#2)}
\newcommand{\Borel}[1]{\mathcal{B}(#1)}

% Neue Umgebungen (mit zugehoerigen Zaehlern)

\newcounter{hypcount}
\newenvironment{hypenv}{\renewcommand{\labelenumi}{(A\arabic{enumi})}\begin{enumerate}\setcounter{enumi}{\value{hypcount}}}{\setcounter{hypcount}{\value{enumi}}\end{enumerate}}

\newcounter{hypcount2}
\newenvironment{Hypenv}{\renewcommand{\labelenumi}{\textsc{(H\arabic{enumi})}}\begin{enumerate}\setcounter{enumi}{\value{hypcount2}}}{\setcounter{hypcount2}{\value{enumi}}\end{enumerate}}

\newenvironment{enumrm}{\begin{enumerate}\renewcommand{\labelenumi}{\textup{(\roman{enumi})}}}{\end{enumerate}}

% Neue Verweisbefehle (vgl. neue Umgebungen)

\newcommand{\hypref}[1]{(A\ref{#1})}
\newcommand{\hyprefall}{(A1)\,--\,(A\arabic{hypcount})}
\newcommand{\Hypref}[1]{(H\ref{#1})}
\newcommand{\Hyprefall}{(H1)\,--\,(H\arabic{hypcount2})}

\newcounter{dummy}
\newcommand{\hyprefallbutlast}{\setcounter{dummy}{\value{hypcount}}\addtocounter{dummy}{-1}(A1)\,--\,(A\arabic{dummy})}

\newcommand{\rmref}[1]{\setcounter{dummy}{\ref{#1}}(\roman{dummy})}

% AMS-Umgebungen

\theoremstyle{plain}
\newtheorem{thrm}{Theorem}[section]
\newtheorem{prop}{Proposition}[section]
\newtheorem{lemma}{Lemma}[section]
\newtheorem{crll}[thrm]{Corollary}

\theoremstyle{definition}
\newtheorem{defn}{Definition}[section]

\theoremstyle{remark}
\newtheorem{case}{Case}[section]
\newtheorem{rem}{Remark}[section]
\newtheorem{exmpl}{Example}[section]

% Skript phi / epsilon
\renewcommand{\phi}{\varphi}
\renewcommand{\epsilon}{\varepsilon}

% Ende der Befehls- und Umgebungsdefinitionen

% Titel, Danksagung
\title{On the connection between symmetric $N$-player games and mean field games}

\author{Markus Fischer\thanks{Department of Mathematics, University of Padua, via Trieste 63, 35121 Padova, Italy. The starting point for the present work was a course on mean field games given by Pierre Cardaliaguet at the University of Padua in 2013. The author is grateful to Martino Bardi and Pierre Cardaliaguet for stimulating discussions. The author thanks two anonymous Referees for their critique, comments, and helpful suggestions. Financial support was provided by the University of Padua through the Project ``Stochastic Processes and Applications to Complex Systems'' (\mbox{CPDA123182}).}
}

\date{May 14, 2014; revised September 7, 2015}

\maketitle

\begin{abstract}
Mean field games are limit models for symmetric $N$-player games with interaction of mean field type as $N\to\infty$. The limit relation is often understood in the sense that a solution of a mean field game allows to construct approximate Nash equilibria for the corresponding $N$-player games. The opposite direction is of interest, too: When do sequences of Nash equilibria converge to solutions of an associated mean field game? In this direction, rigorous results are mostly available for stationary problems with ergodic costs. Here, we identify limit points of sequences of certain approximate Nash equilibria as solutions to mean field games for problems with It{\^o}-type dynamics and costs over a finite time horizon. Limits are studied through weak convergence of associated normalized occupation measures and identified using a probabilistic notion of solution for mean field games.
\end{abstract}

\medskip {\small \textbf{2000 AMS subject classifications:} 60B10, 60K35, 91A06, 93E20}

\medskip {\small \textbf{Key words and phrases:} Nash equilibrium; mean field game; McKean-Vlasov limit; weak convergence; martingale problem; optimal control}

\section{Introduction} \label{SectIntro}

Mean field games, as introduced by J.M.~Lasry and P.-L.~Lions \citep[][]{lasrylions06a, lasrylions06b, lasrylions07} and, independently, by M.~Huang, R.P.~Malham{\'e}, and P.E.~Caines \citep[][and subsequent works]{huangetal06}, are limit models for symmetric non-zero-sum non-cooperative $N$-player games with interaction of mean field type as the number of players tends to infinity. The limit relation is often understood in the sense that a solution of the mean field game allows to construct approximate Nash equilibria for the corresponding $N$-player games if $N$ is sufficiently large; see, for instance, \citet{huangetal06}, \citet{kolokoltsovetal11}, \citet{carmonadelarue13}, and \citet{carmonalacker15}. This direction is useful from a practical point of view since the model of interest is commonly the $N$-player game with $N$ big so that a direct computation of Nash equilibria is not feasible. 

The opposite direction in the limit relation is of interest, too: When and in which sense do sequences of Nash equilibria for the $N$-player games converge to solutions of a corresponding mean field game? An answer to this question is useful as it provides information on what kind of Nash equilibria can be captured by the mean field game approach. In view of the theory of McKean-Vlasov limits and propagation of chaos for uncontrolled weakly interacting systems \citep[cf.][]{mckean66,sznitman89}, one may expect to obtain convergence results for broad classes of systems, at least under some symmetry conditions on the Nash equilibria. This heuristic was the original motivation in the introduction of mean field games by Lasry and Lions. Rigorous results supporting it are nonetheless few, and they mostly apply to stationary problems with ergodic costs and special structure (in particular, affine-linear dynamics and convex costs); see \citet{lasrylions07}, \citet{feleqi13}, \citet{bardipriuli13, bardipriuli14}. For non-stationary problems, the passage to the limit has been established rigorously in \citet{gomesetalii13} for a class of continuous-time finite horizon problems with finite state space, but only if the time horizon is sufficiently small. Moreover, in the situation studied there, Nash equilibria for the $N$-player games are unique in a class of symmetric Markovian feedback strategies. The above cited works on the passage to the limit all employ methods from the theory of ordinary or partial differential equations, in particular, equations of Hamilton-Jacobi-Bellman-type. In \citet{lacker15b}, which appeared as preprint three months after submission of the present paper, a general characterization of the limit points of $N$-player Nash equilibria is obtained through probabilistic methods. We come back to that work, which also covers mean field games with common noise, in the second but last paragraph of this section. 

Here, we study the limit relation between symmetric $N$-player games and mean field games in the direction of the Lasry-Lions heuristic for continuous time finite horizon problems with fairly general cost structure and It{\^o}-type dynamics. The aim is to identify limit points of sequences of symmetric Nash equilibria for the $N$-player games as solutions of a mean field game. For a general introduction to mean field games, see \cite{cardaliaguet13} or \citet{carmonaetalii13}. The latter work also explains the difference in the limit relation that distinguishes mean field games from optimal control problems of McKean-Vlasov-type.

To describe the prelimit systems, let $X^{N}_{i}(t)$ denote the state of player $i$ at time $t$ in the $N$-player game, and denote by $u_{i}(t)$ the control action that he or she chooses at time $t$. Individual states will be elements of $\mathbb{R}^{d}$, while control actions will be elements of some closed set $\Gamma \subset \mathbb{R}^{d_{2}}$. The evolution of the individual states is then described by the It{\^o} stochastic differential equations
\begin{equation} \label{EqPrelimitSDE}
	dX^{N}_{i}(t) = b\bigl(t,X^{N}_{i}(t),\mu^{N}(t),u_{i}(t)\bigr)dt + \sigma\bigl(t,X^{N}_{i}(t),\mu^{N}(t)\bigr)dW^{N}_{i}(t),
\end{equation}
$i\in \{1,\ldots,N\}$, where $W^{N}_{1},\ldots,W^{N}_{N}$ are independent standard Wiener processes, and $\mu^{N}(t)$ is the empirical measure of the system at time $t$:
\[
	\mu^{N}(t)\doteq \frac{1}{N} \sum_{i=1}^{N} \delta_{X^{N}_{i}(t)}.
\]
Notice that the coefficients $b$, $\sigma$ in Eq.~\eqref{EqPrelimitSDE} are the same for all players. We will assume $b$, $\sigma$ to be continuous in the time variable, Lipschitz continuous in the state and measure variable, where we use the square Wasserstein metric as a distance on probability measures, and of sub-linear growth. The dispersion coefficient $\sigma$ does not depend on the control variable, but it may depend on the measure-variable. Moreover, $\sigma$ is allowed to be degenerate. Deterministic systems are thus covered as a special case.
 
The individual dynamics are explicitly coupled only through the empirical measure process $\mu^{N}$. There is also an implicit coupling, namely through the strategies $u_{1},\ldots,u_{N}$, which may depend on non-local information; in particular, a strategy $u_{i}$ might depend, in a non-anticipative way, on $X^{N}_{j}$ or $W^{N}_{j}$ for $j\neq i$. In this paper, strategies will always be stochastic open-loop. In particular, strategies will be processes adapted to a filtration that represents the information available to the players. We consider two types of information: full information, which is the same for all players and is represented by a filtration at least as big as the one generated by the initial states and the Wiener processes, and local information, which is player-dependent and, for player $i$, is represented by the filtration generated by his/her own initial state and the Wiener process $W^{N}_{i}$. 

Let $\boldsymbol{u} = (u_{1},\ldots,u_{N})$ be a strategy vector, i.e., an $N$-vector of $\Gamma$-valued processes such that $u_{i}$ is a strategy for player $i$, $i\in \{1,\ldots,N\}$. Player $i$ evaluates the effect of the strategy vector $\boldsymbol{u}$ according to the cost functional
\[
	J^{N}_{i}\left(\boldsymbol{u}\right)\doteq \Mean\left[ \int_{0}^{T} f\left(s,X^{N}_{i}(s),\mu^{N}(s),u_{i}(s)\right)ds + F\left(X^{N}_{i}(T),\mu^{N}(T) \right) \right],
\]
where $T > 0$ is the finite time horizon, $(X^{N}_{1},\ldots,X^{N}_{N})$ the solution of the system \eqref{EqPrelimitSDE} under $\boldsymbol{u}$, and $\mu^{N}$ the corresponding empirical measure process. The cost coefficients $f$, $F$, which quantify running and terminal costs, respectively, are assumed to be continuous in the time and control variable, locally Lipschitz continuous in the state and measure variable, and of sub-quadratic growth. The action space $\Gamma$ is assumed to be closed, but not necessarily compact; in the non-compact case, $f$ will be quadratically coercive in the control. The assumptions on the coefficients are chosen so that they cover some linear-quadratic problems, in addition to many genuinely non-linear problems.

If there were no control in Eq.~\eqref{EqPrelimitSDE} (i.e., $b$ independent of the control variable) and if the initial states for the $N$-player games were independent and identically distributed with common distribution not depending on $N$, then $(X^{N}_{1},\ldots,X^{N}_{N})$ would be exchangeable for every $N\in \mathbb{N}$ and, under our assumptions on $b$, $\sigma$, the sequence $(\mu^{N})$ of empirical measure processes would converge to some deterministic flow of probability measures:
\begin{align*}
	&\mu^{N}(t) \stackrel{N\to\infty}{\longrightarrow} \mathfrak{m}(t),& &\text{in distribution\,/\,probability}.&
\end{align*}
This convergence would also hold for the sequence of path-space empirical measures, which, by symmetry and the Tanaka-Sznitman theorem, is equivalent to the propagation of chaos property for the triangular array $(X^{N}_{i})_{i\in \{1,\ldots,N\},N\in \mathbb{N}}$. In particular, $\Law(X^{N}_{i}(t)) \to \mathfrak{m}(t)$ as $N\to \infty$ for each fixed index $i$, and $\mathfrak{m}$ would be the flow of laws for the uncontrolled McKean-Vlasov equation
\begin{align*}
	&dX(t) = b\bigl(t,X(t),\mathfrak{m}(t)\bigr)dt + \sigma\bigl(t,X(t),\mathfrak{m}(t)\bigr)dW(t),& &\mathfrak{m}(t) = \Law(X(t)).&
\end{align*}
The above equation would determine the flow of measures $\mathfrak{m}$.

Now, for $N\in \mathbb{N}$, let $\boldsymbol{u}^{N}$ be a strategy vector for the $N$-player game. For the sake of argument, let us suppose that $\boldsymbol{u}^{N} = (u^{N}_{1},\ldots,u^{N}_{N})$ is a symmetric Nash equilibrium for each $N$ (symmetric in the sense that the finite sequence $((X^{N}_{1}(0),u^{N}_{1},W^{N}_{1}),\ldots,(X^{N}_{N}(0),u^{N}_{N},W^{N}_{N}))$ is exchangeable). If the mean field heuristic applies, then the associated sequence of empirical measure processes $(\mu^{N})_{N\in\mathbb{N}}$ converges in distribution to some deterministic flow of probability measures $\mathfrak{m}$. In this case, $\mathfrak{m}$ should be the flow of measures induced by the solution of the controlled equation
\begin{equation} \label{EqLimitSDE}
	dX(t) = b\bigl(t,X(t),\mathfrak{m}(t),u(t))\bigr)dt + \sigma\bigl(t,X(t),\mathfrak{m}(t))\bigr)dW(t),
\end{equation}
where the control process $u$ should, by the Nash equilibrium property of the $N$-player strategies, be optimal for the control problem
\begin{equation} \label{EqLimitCP}
\begin{split}
	&\text{minimize } J_{\mathfrak{m}}\left(v\right)\doteq \Mean\left[ \int_{0}^{T} f\left(s,X(s),\mathfrak{m}(t),v(s)\right)ds + F\left(X(T),\mathfrak{m}(T) \right) \right] \\
	&\text{over all admissible $v$ subject to: $X$ solves Eq.~\eqref{EqLimitSDE} under $v$.}
\end{split}
\end{equation}

The mean field game, which is the limit system for the $N$-player games, can now be described as follows: For each flow of measures $\mathfrak{m}$, solve the optimal control problem \eqref{EqLimitCP} to find an optimal control $u^{\mathfrak{m}}$ with corresponding state process $X^{\mathfrak{m}}$. Then choose a flow of measures $\mathfrak{m}$ according to the mean field condition $\mathfrak{m}(\cdot) = \Law(X^{\mathfrak{m}}(\cdot))$. This yields a solution of the mean field game, which can be identified with the pair $(\Law(X^{\mathfrak{m}},u^{\mathfrak{m}},W),\mathfrak{m})$; see Definition~\ref{DefMFGSol} below. We include the driving noise process $W$ in the definition of the solution, as it is the joint distribution of initial condition, control process and noise process that determines the law of a solution to Eq.~\eqref{EqLimitSDE}. If $(\Law(X^{\mathfrak{m}},u^{\mathfrak{m}},W),\mathfrak{m})$ is a solution of the mean field game, then, thanks to the mean field condition, $X^{\mathfrak{m}}$ is a McKean-Vlasov solution of the controlled equation \eqref{EqLimitSDE}; moreover, $X^{\mathfrak{m}}$ is an optimally controlled process for the standard optimal control problem \eqref{EqLimitCP} with cost functional $J_{\mathfrak{m}}$. Clearly, neither existence nor uniqueness of solutions of the mean field game are a priori guaranteed.

In order to connect sequences of Nash equilibria with solutions of the mean field game in a rigorous way, we associate strategy vectors for the $N$-player games with normalized occupation measures or path-space empirical measures; see Eq.~\eqref{ExOccupationMeasure} in Section~\ref{SectConvergence} below. Those occupation measures are random variables with values in the space of probability measures on an extended canonical space $\mathcal{Z}\doteq \mathcal{X}\times \mathcal{R}_{2}\times \mathcal{W}$, where $\mathcal{X}$, $\mathcal{W}$ are path spaces for the individual state processes and the driving Wiener processes, respectively, and $\mathcal{R}_{2}$ is a space of $\Gamma$-valued relaxed controls. Observe that $\mathcal{Z}$ contains a component for the trajectories of the driving noise process. Let $(\boldsymbol{u}^{N})$ be a sequence such that, for each $N\in \mathbb{N}$, $\boldsymbol{u}^{N}$ is a strategy vector for the $N$-player game (not necessarily a Nash equilibrium). Let $(Q_{N})$ be the associated normalized occupation measures; thus, $Q_{N}$ is the empirical measure of $((X^{N}_{1},u^{N}_{1},W^{N}_{1}),\ldots,(X^{N}_{N},u^{N}_{N},W^{N}_{N}))$ seen as a random element of $\prbms{\mathcal{Z}}$. We then show the following:
\begin{enumerate}
	\item \label{PointTightness} The family $(Q_{N})_{N\in \mathbb{N}}$ is pre-compact under a mild uniform integrability condition on strategies and initial states; see Lemma~\ref{LemmaTightness}.
	
	\item \label{PointMcKeanVlasov} Any limit random variable $Q$ of $(Q_{N})$ takes values in the set of McKean-Vlasov solutions of Eq.~\eqref{EqLimitSDE} with probability one; see Lemma~\ref{LemmaConvergence}.
	
	\item \label{PointOptimality} Suppose that $(\boldsymbol{u}^{N})$ is a sequence of local approximate Nash equilibria (cf.\ Definition~\ref{DefNash}). If $Q$ is a limit point of $(Q_{N})$ such that the flow of measures induced by $Q$ is deterministic with probability one, then $Q$ takes values in the set of solutions of the mean field game with probability one; see Theorem~\ref{ThConnection}.
\end{enumerate}

The hypothesis in Point~\ref{PointOptimality} above that the flow of measures induced by $Q$ is deterministic with probability one means that the corresponding subsequence of $(\mu^{N})$, the empirical measure processes, converges in distribution to a deterministic flow of probability measures $\mathfrak{m}$. This is a strong hypothesis, essentially part of the mean field heuristic; nonetheless, it is satisfied if $\boldsymbol{u}^{N}$ is a vector of independent and identically distributed individual strategies for each $N$, where the common distribution is allowed to vary with $N$; see Corollary~\ref{CorConnection}. While Nash equilibria for the $N$-player games with independent and identically distributed individual strategies do not exist in general, local approximate Nash equilibria with i.i.d.\ components do exist, at least under the additional assumption of compact action space $\Gamma$ and bounded coefficients; see Proposition~\ref{PropNashExistence}. In this situation, the passage to the mean field game limit is justified.

For the passage to the limit required by Point~\ref{PointMcKeanVlasov} above, we have to identify solutions of Eq.~\eqref{EqLimitSDE}, which describes the controlled dynamics of the limit system. To this end, we employ a local martingale problem in the spirit of \citet{stroockvaradhan79}. The use of martingale problems, together with weak convergence methods, has a long tradition in the analysis of McKean-Vlasov limits for uncontrolled weakly interacting systems \citep[for instance,][]{funaki84,oelschlaeger84} as well as in the study of stochastic optimal control problems. Controlled martingale problems are especially powerful in combination with relaxed controls; see \citet{elkarouietalii87, kushner90}, and the references therein. In the context of mean field games, a martingale problem formulation has been used by \citet{carmonalacker15} to establish existence and uniqueness results for non-degenerate systems and, more recently, by \citet{lacker15}, where existence of solutions is established for mean field games of the type studied here; the assumptions on the coefficients are rather mild, allowing for degenerate as well as control-dependent diffusion coefficient. The notion of solution we give in Definition~\ref{DefMFGSol} below corresponds to the notion of ``relaxed mean field game solution'' introduced in \citet{lacker15}.

The martingale problem formulation for the controlled limit dynamics we use here  is actually adapted from the joint work \citet{budhirajaetal12}, where we studied large deviations for weakly interacting It{\^o} processes through weak convergence methods. While the passage to the limit needed there for obtaining convergence of certain Laplace functionals is analogous to the convergence result of Point~\ref{PointMcKeanVlasov} above, the limit problems in \citet{budhirajaetal12} are not mean field games; they are, in fact, optimal control problems of McKean-Vlasov type, albeit with a particular structure. As a consequence, optimality has to be verified in a different way: In order to establish Point~\ref{PointOptimality} above, we construct an asymptotically approximately optimal competitor strategy in noise feedback form (i.e., as a function of time, initial condition, and the trajectory of the player's noise process up to current time), which is then applied to exactly one of the $N$ players for each $N$; this yields optimality of the limit points thanks to the Nash equilibrium property of the prelimit strategies. If the limit problem were of McKean-Vlasov type, one would use a strategy selected according to a different optimality criterion and apply it to all components (or players) of the prelimit systems.

In the work by \citet{lacker15b} mentioned in the second paragraph, limit points of normalized occupation measures associated with a sequence of $N$-player Nash equilibria are shown to be concentrated on solutions of the corresponding mean field game even if the induced limit flow of measures is stochastic (in contrast to Point~\ref{PointOptimality} above). This characterization is established for mean field systems over a finite time horizon as here, but possibly with a common noise (represented as an additional independent Wiener process common to all players). There as here, Nash equilibria are considered in stochastic open-loop strategies, and the methods of proof are similar to ours. 
The characterization of limit points in \citet{lacker15b} relies, even in the situation without common noise studied here, on a new notion of solution of the mean field game (``weak \mbox{MFG} solution'') that applies to probability measures on an extended canonical space (extended with respect to our $\mathcal{Z}$ to keep track of the possibly stochastic flow of measures). In terms of that notion of solution a complete characterization of limit points is achieved. In particular, the assumption in Point~\ref{PointOptimality} that the flow of measures induced by $Q$ is deterministic can be removed. However, if that assumption is dropped, then the claim that $Q$ takes values in the set of solutions of the mean field game with probability one will in general be false. A counterexample illustrating this fact can be deduced from the discussion of subsection~3.3 in \citet{lacker15b}. The notion of ``weak \mbox{MFG} solution'' is indeed strictly weaker than what one obtains by randomization of the usual notion of solution (``strong'' solution with probability one), and this is what makes the complete characterization of Nash limit points possible.

The rest of this work is organized as follows. Notation, basic objects as well as the standing assumptions on the coefficients $b$, $\sigma$, $f$, $F$ are introduced in Section~\ref{SectNotation}. Section~\ref{SectPrelimitSystems} contains a precise description of the $N$-player games. Nash equilibria are defined and an existence result for certain local approximate Nash equilibria is given; see Proposition~\ref{PropNashExistence}. In Section~\ref{SectLimitSystems}, the limit dynamics for the $N$-player games are introduced. The corresponding notions of McKean-Vlasov solution and solution of the mean field game are defined and discussed. An approximation result in terms of noise feedback strategies, needed in the construction of competitor strategies, is given in Lemma~\ref{LemmaNoiseFeedback}. In Section~\ref{SectConvergence}, the convergence analysis is carried out, leading to Theorem~\ref{ThConnection} and its Corollary~\ref{CorConnection}, which are our main results. Existence of solutions of the mean field game falls out as a by-product of the analysis.

%-------

\section{Preliminaries and assumptions} \label{SectNotation}

Let $d, d_{1}, d_{2}\in \mathbb{N}$, which will be the dimensions of the space of private states, noise values, and control actions, respectively. Choose $T > 0$, the finite time horizon. Set
\begin{align*}
	&\mathcal{X}\doteq \mathbf{C}([0,T],\mathbb{R}^{d}),& &\mathcal{W}\doteq \mathbf{C}([0,T],\mathbb{R}^{d_{1}}),&
\end{align*}
and, as usual, equip $\mathcal{X}$, $\mathcal{W}$ with the topology of uniform convergence, which turns them into Polish spaces. Let $\|\cdot\|_{\mathcal{X}}$, $\|\cdot\|_{\mathcal{W}}$ denote the supremum norm on $\mathcal{X}$ and $\mathcal{W}$, respectively. The spaces $\mathbb{R}^{n}$ with $n\in \mathbb{N}$ are equipped with the standard Euclidean norm, always indicated by $|.|$.

For $\mathcal{S}$ a Polish space, let $\prbms{\mathcal{S}}$ denote the space of probability measures on $\Borel{\mathcal{S}}$, the Borel sets of $\mathcal{S}$. For $s\in \mathcal{S}$, let $\delta_{s}$ indicate the Dirac measure concentrated in $s$. Equip $\prbms{\mathcal{S}}$ with the topology of weak convergence of probability measures. Then $\prbms{\mathcal{S}}$ is again a Polish space. Let $\mathrm{d}_{\mathcal{S}}$ be a metric compatible with the topology of $\mathcal{S}$ such that $(\mathcal{S},\mathrm{d}_{\mathcal{S}})$ is a complete and separable metric space. A metric that turns $\prbms{\mathcal{S}}$ into a complete and separable metric space is then given by the bounded Lipschitz metric
\begin{align*}
	\mathrm{d}_{\prbms{\mathcal{S}}}(\nu,\tilde{\nu})&\doteq \sup\left\{ \int_{\mathcal{S}} g\,d\nu - \int_{\mathcal{S}} g\,d\tilde{\nu} : g\!: \mathcal{S} \rightarrow \mathbb{R} \text{ such that } \|g\|_{\mathrm{bLip}} \leq 1 \right\}, \\
\intertext{where}
	\|g\|_{\mathrm{bLip}}&\doteq \sup_{s\in \mathcal{S}} |g(s)| + \sup_{s,\tilde{s}\in \mathcal{S}: s\neq \tilde{s}} \frac{|g(s)-g(\tilde{s})|}{\mathrm{d}_{\mathcal{S}}(s,\tilde{s})}.
\end{align*}

Given a complete compatible metric $\mathrm{d}_{\mathcal{S}}$ on $\mathcal{S}$, we also consider the space of probability measures on $\Borel{\mathcal{S}}$ with finite second moments:
\[
	\prbms[2]{\mathcal{S}}\doteq \left\{\nu\in \prbms{\mathcal{S}}:\; \exists s_{0}\in \mathcal{S}: \int_{\mathcal{S}} \mathrm{d}_{\mathcal{S}}(s,s_{0})^{2}\, \nu(ds) < \infty \right\}.
\]
Notice that $\int \mathrm{d}_{\mathcal{S}}(s,s_{0})^{2} \nu(ds) < \infty$ for some $s_{0}\in \mathcal{S}$ implies that the integral is finite for every $s_{0}\in \mathcal{S}$. The topology of weak convergence of measures plus convergence of second moments turns $\prbms[2]{\mathcal{S}}$ into a Polish space. A compatible complete metric is given by
\[
	\mathrm{d}_{\prbms[2]{\mathcal{S}}}(\nu,\tilde{\nu})\doteq \left(\inf_{\alpha\in \prbms{\mathcal{S}\times\mathcal{S}}: [\alpha]_{1} = \nu\text{ and } [\alpha]_{2} = \tilde{\nu}} \int_{\mathcal{S}\times\mathcal{S}} \mathrm{d}_{\mathcal{S}}(s,\tilde{s})^{2}\, \alpha(ds,d\tilde{s}) \right)^{1/2}, 
\]
where $[\alpha]_{1}$ ($[\alpha]_{2}$) denotes the first (second) marginal of $\alpha$; $\mathrm{d}_{\prbms[2]{\mathcal{S}}}$ is often referred to as the square Wasserstein (or Vasershtein) metric. An immediate consequence of the definition of $\mathrm{d}_{\prbms[2]{\mathcal{S}}}$ is the following observation: for all $N\in \mathbb{N}$, $s_{1},\ldots,s_{N}, \tilde{s}_{1},\ldots,\tilde{s}_{N}\in \mathcal{S}$,
\begin{equation} \label{EqEmpWasserstein}
	\mathrm{d}_{\prbms[2]{\mathcal{S}}}\left( \frac{1}{N}\sum_{i=1}^{N} \delta_{s_{i}}, \frac{1}{N}\sum_{i=1}^{N} \delta_{\tilde{s}_{i}} \right) \leq \sqrt{\frac{1}{N}\sum_{i=1}^{N} \mathrm{d}_{\mathcal{S}}\bigl(s_{i},\tilde{s}_{i}\bigr)^{2}}.  
\end{equation}
The bounded Lipschitz metric and the square Wasserstein metric on $\prbms{\mathcal{S}}$ and $\prbms[2]{\mathcal{S}}$, respectively, depend on the choice of the metric $\mathrm{d}_{\mathcal{S}}$ on the underlying space $\mathcal{S}$. This dependence will be clear from context. If $\mathcal{S} = \mathbb{R}^{d}$ with the metric induced by Euclidean norm, we may write $\mathrm{d}_{2}$ to indicate the square Wasserstein metric $\mathrm{d}_{\prbms[2]{\mathbb{R}^{d}}}$. 

Let $\mathcal{M}$, $\mathcal{M}_{2}$ denote the spaces of continuous functions on $[0,T]$ with values in $\prbms{\mathbb{R}^{d}}$ and $\prbms[2]{\mathbb{R}^{d}}$, respectively:
\begin{align*}
	& \mathcal{M}\doteq \mathbf{C}([0,T],\prbms{\mathbb{R}^{d}}),& &\mathcal{M}_{2}\doteq \mathbf{C}([0,T],\prbms[2]{\mathbb{R}^{d}}).&
\end{align*}

Let $\Gamma$ be a closed subset of $\mathbb{R}^{d_{2}}$, the set of control actions, or action space. Given a probability space $(\Omega,\mathcal{F},\Prb)$ and a filtration $(\mathcal{F}_{t})$ in $\mathcal{F}$, let $\mathcal{H}_{2}((\mathcal{F}_{t}),\Prb; \Gamma)$ denote the space of all $\Gamma$-valued $(\mathcal{F}_{t})$-progressively measurable processes $u$ such that $\Mean\left[\int_{0}^{T} |u(t)|^{2}dt \right] < \infty$. The elements of $\mathcal{H}_{2}((\mathcal{F}_{t}),\Prb; \Gamma)$ might be referred to as (individual) strategies.

Denote by $\mathcal{R}$ the space of all deterministic relaxed controls on $\Gamma\times [0,T]$, that is,
\[
	\mathcal{R}\doteq \left\{ r : r\text{ positive measure on }\mathcal{B}(\Gamma\times [0,T]): r(\Gamma\times[0,t]) = t\; \forall t\in[0,T] \right\}.
\]	
If $r\in \mathcal{R}$ and $B\in \mathcal{B}(\Gamma)$, then the mapping $[0,T]\ni t\mapsto r(B\times[0,t])$ is absolutely continuous, hence differentiable almost everywhere. Since $\mathcal{B}(\Gamma)$ is countably generated, the time derivative of $r$ exists almost everywhere and is a measurable mapping $\dot{r}_{t}\!: [0,T]\rightarrow \mathcal{P}(\Gamma)$ such that $r(dy,dt) = \dot{r}_{t}(dy)dt$. Denote by $\mathcal{R}_{2}$ the space of deterministic relaxed controls with finite second moments:
\[
	\mathcal{R}_{2}\doteq \left\{ r\in \mathcal{R}:\int_{\Gamma\times [0,T]}|y|^{2}\, r(dy,dt)<\infty \right\}.
\]
By definition, $\mathcal{R}_{2}\subset \mathcal{R}$. The topology of weak convergence of measures turns $\mathcal{R}$ into a Polish space (not compact unless $\Gamma$ is bounded). Equip $\mathcal{R}_{2}$ with the topology of weak convergence of measures plus convergence of second moments, which makes $\mathcal{R}_{2}$ a Polish space, too.

Any $\Gamma$-valued process $v$ defined on some probability space $(\Omega,\mathcal{F},\Prb)$ induces an $\mathcal{R}$-valued random variable $\rho$ according to 
\begin{equation} \label{ExControlRelaxation}
	\rho_{\omega}\bigl(B\times I\bigr)\doteq \int_{I}\delta_{v(t,\omega)}(B)dt,\quad B\in \Borel{\Gamma},\; I\in \Borel{[0,T]},\;\omega \in \Omega.
\end{equation}
If $v$ is such that $\int_{0}^{T} |v(t)|^{2} dt < \infty$ $\Prb$-almost surely, then the induced random variable $\rho$ takes values in $\mathcal{R}_{2}$ $\Prb$-almost surely. If $v$ is progressively measurable with respect to a filtration $(\mathcal{F}_{t})$ in $\mathcal{F}$, then $\rho$ is adapted in the sense that the mapping $t\mapsto \rho(B\times[0,t])$ is $(\mathcal{F}_{t})$-adapted for every $B\in \mathcal{B}(\Gamma)$ \citep[cf.][Section~3.3]{kushner90}. More generally, an $\mathcal{R}$-valued random variable $\rho$ defined on some probability space $(\Omega,\mathcal{F},\Prb)$ is called adapted to a filtration $(\mathcal{F}_{t})$ in $\mathcal{F}$ if the process $t\mapsto \rho(B\times[0,t])$ is $(\mathcal{F}_{t})$-adapted for every $B\in \Borel{\Gamma}$.

Below, we will make use of the following canonical space. Set
\begin{equation*}
	\mathcal{Z}\doteq \mathcal{X}\times \mathcal{R}_{2}\times \mathcal{W},
\end{equation*}
and endow $\mathcal{Z}$ with the product topology, which makes it a Polish space. Let $\mathrm{d}_{\mathcal{R}_{2}}$ be a complete metric compatible with the topology of $\mathcal{R}_{2}$. Set
\[
	\mathrm{d}_{\mathcal{Z}}\left((\phi,r,w),(\tilde{\phi},\tilde{r},\tilde{w})\right)\doteq \|\phi - \tilde{\phi}\|_{\mathcal{X}} + \frac{\mathrm{d}_{\mathcal{R}_{2}}(r,\tilde{r})}{1+\mathrm{d}_{\mathcal{R}_{2}}(r,\tilde{r})} + \frac{\| w - \tilde{w}\|_{\mathcal{W}}}{1+ \| w - \tilde{w}\|_{\mathcal{W}}},
\]
where $(\phi,r,w)$, $(\tilde{\phi},\tilde{r},\tilde{w})$ are elements of $\mathcal{Z}$ written component-wise. This defines a complete metric compatible with the topology of $\mathcal{Z}$. Let $\mathrm{d}_{\prbms[2]{\mathcal{Z}}}$ be the square Wasserstein metric on $\prbms[2]{\mathcal{Z}}$ induced by $\mathrm{d}_{\mathcal{Z}}$. Since $\mathrm{d}_{\mathcal{Z}}$ is bounded with respect to the second and third component of $\mathcal{Z}$, the condition of finite second moment is a restriction only on the first marginal of the probability measures on $\Borel{\mathcal{Z}}$. Let us indicate by $\mathrm{d}_{\prbms{\prbms[2]{\mathcal{Z}}}}$ the bounded Lipschitz metric on $\prbms{\prbms[2]{\mathcal{Z}}}$ induced by $\mathrm{d}_{\prbms[2]{\mathcal{Z}}}$. Denote by $(\hat{X},\hat{\rho},\hat{W})$ the coordinate process on $\mathcal{Z}$:
\begin{align*}
	& \hat{X}(t,(\phi,r,w))\doteq \phi (t),& & \hat{\rho}(t,(\phi,r,w))\doteq r_{|\mathcal{B}(\Gamma\times [0,t])},& & \hat{W}(t,(\phi,r,w))\doteq w(t).&
\end{align*}
Let $(\mathcal{G}_{t})$ be the canonical filtration in $\mathcal{B}(\mathcal{Z})$, that is,
\[
	\mathcal{G}_{t}\doteq \sigma \left( (\hat{X},\hat{\rho},\hat{W})(s) : 0\leq s\leq t\right),\quad t\in [0,T].
\]

Let $b$ denote the drift coefficient and $\sigma$ the dispersion coefficient of the dynamics, and let $f$, $F$ quantify the running costs and terminal costs, respectively; we take
\begin{align*}
	& b\!: [0,T]\times \mathbb{R}^{d}\times \prbms[2]{\mathbb{R}^{d}}\times \Gamma \rightarrow \mathbb{R}^{d},& & \sigma\!: [0,T]\times \mathbb{R}^{d}\times \prbms[2]{\mathbb{R}^{d}} \rightarrow \mathbb{R}^{d\times d_{1}},& \\
	& f\!: [0,T]\times \mathbb{R}^{d}\times \prbms[2]{\mathbb{R}^{d}}\times \Gamma \rightarrow [0,\infty),& & F\!: \mathbb{R}^{d}\times \prbms[2]{\mathbb{R}^{d}} \rightarrow [0,\infty). &
\end{align*}
Notice that the dispersion coefficient $\sigma$ does not depend on the control variable and that the cost coefficients $f$, $F$ are non-negative functions. We make the following assumptions, where $K$, $L$ are some finite positive constants:
\begin{hypenv}

	\item \label{HypMeasCont} Measurability and continuity in time and control: $b$, $\sigma$, $f$, $F$ are Borel measurable and such that, for all $(x,\nu)\in \mathbb{R}^{d}\times \prbms[2]{\mathbb{R}^{d}}$, $b(\cdot,x,\nu,\cdot)$, $\sigma(\cdot,x,\nu)$, $f(\cdot,x,\nu,\cdot)$ are continuous, uniformly over compact subsets of $\mathbb{R}^{d}\times \prbms[2]{\mathbb{R}^{d}}$.
	 
	\item \label{HypLipschitz} Lipschitz continuity of $b$, $\sigma$: for all $x,\tilde{x}\in \mathbb{R}^{d}$, $\nu,\tilde{\nu}\in \prbms[2]{\mathbb{R}^{d}}$,
\begin{multline*}
	\sup_{t\in[0,T]} \sup_{\gamma\in\Gamma} \left\{ \left|b(t,x,\nu,\gamma) - b(t,\tilde{x},\tilde{\nu},\gamma) \right| \vee \left|\sigma(t,x,\nu) - \sigma(t,\tilde{x},\tilde{\nu}) \right| \right\} \\
	\leq L\left( |x-\tilde{x}| + \mathrm{d}_{2}(\nu,\tilde{\nu}) \right).
\end{multline*}

	\item \label{HypGrowth} Sublinear growth of $b$, $\sigma$: for all $x\in \mathbb{R}^{d}$, $\nu\in \prbms[2]{\mathbb{R}^{d}}$, $\gamma\in \Gamma$,
\begin{align*}
	\sup_{t\in[0,T]} \left|b(t,x,\nu,\gamma)\right| &\leq K\left(1 + |x| + |\gamma| + \sqrt{\int |y|^{2} \nu(dy)} \right), \\
	\sup_{t\in[0,T]} \left|\sigma(t,x,\nu) \right| &\leq K\left(1 + |x| + \sqrt{\int |y|^{2} \nu(dy)} \right).
\end{align*}

	\item \label{HypCostLip} Local Lipschitz continuity of $f$, $F$: for all $x,\tilde{x}\in \mathbb{R}^{d}$, $\nu,\tilde{\nu}\in \prbms[2]{\mathbb{R}^{d}}$
\begin{multline*}
	\sup_{t\in[0,T]} \sup_{\gamma\in\Gamma} \left\{ \left|f(t,x,\nu,\gamma) - f(t,\tilde{x},\tilde{\nu},\gamma) \right| + \left|F(x,\nu) - F(\tilde{x},\tilde{\nu}) \right| \right\} \\
	\leq L\left( |x-\tilde{x}| + \mathrm{d}_{2}(\nu,\tilde{\nu}) \right) \left( 1 + |x| + |\tilde{x}| + \sqrt{\int |y|^{2} \nu(dy)} + \sqrt{\int |y|^{2} \tilde{\nu}(dy)} \right).
\end{multline*}
	
	\item \label{HypCostGrowth} Subquadratic growth of $f$, $F$: for all $x\in \mathbb{R}^{d}$, $\nu\in \prbms[2]{\mathbb{R}^{d}}$, $\gamma\in \Gamma$,
\[
	\sup_{t\in[0,T]} \left\{ \left|f(t,x,\nu,\gamma)\right| \vee \left|F(x,\nu) \right| \right\} \leq K \left(1 + |x|^{2} + |\gamma|^{2} + \int |y|^{2} \nu(dy) \right). 
\]	

	\item \label{HypCostCoercivity} Action space and coercivity: $\Gamma \subset \mathbb{R}^{d_{2}}$ is closed, and there exist $c_{0} > 0$ and $\Gamma_{0}\subset \Gamma$ such that $\Gamma_{0}$ is compact and for every $\gamma\in \Gamma\setminus \Gamma_{0}$
\[
	\inf_{(t,x,\nu)\in [0,T]\times \mathbb{R}^{d}\times \prbms[2]{\mathbb{R}^{d}}} f(t,x,\nu,\gamma) \geq c_{0} |\gamma|^{2}.
\]

\end{hypenv}

%-------

\section{$N$-player games} \label{SectPrelimitSystems}

Let $N\in \mathbb{N}$. Let $(\Omega_{N},\mathcal{F}^{N},\Prb_{N})$ be a complete probability space equipped with a filtration $(\mathcal{F}^{N}_{t})$ in $\mathcal{F}^{N}$ that satisfies the usual hypotheses and carrying $N$ independent $d_{1}$-dimensional $(\mathcal{F}^{N}_{t})$-Wiener processes $W^{N}_{1},\ldots,W^{N}_{N}$. For each $i\in \{1,\ldots,N\}$, choose a random variable $\xi^{N}_{i} \in L^{2}(\Omega_{N},\mathcal{F}^{N}_{0},\Prb_{N};\mathbb{R}^{d})$, the initial state of player $i$ in the prelimit game with $N$ players. In addition, we assume that the stochastic basis is rich enough to carry a sequence $(\vartheta^{N}_{i})_{i\in \{1,\ldots,N\}}$ of independent random variables with values in the interval $[0,1]$ such that each $\vartheta^{N}_{i}$ is $\mathcal{F}^{N}_{0}$-measurable and uniformly distributed on $[0,1]$, and $(\vartheta^{N}_{i})_{i\in \{1,\ldots,N\}}$ is independent of the $\sigma$-algebra generated by $\xi^{N}_{1},\ldots,\xi^{N}_{N}$ and the Wiener processes $W^{N}_{1},\ldots,W^{N}_{N}$. The random variables $\vartheta^{N}_{i}$, $i\in \{1,\ldots,N\}$, are a technical device which we may use without loss of generality; see Remark~\ref{RemNashlocalext} below.

A vector of individual strategies, that is, a vector $\boldsymbol{u}= (u_{1},\ldots,u_{N})$ such that $u_{1},\ldots,u_{N}\in \mathcal{H}_{2}((\mathcal{F}^{N}_{t}),\Prb_{N};\Gamma)$, is called a strategy vector. Given a strategy vector $\boldsymbol{u}= (u_{1},\ldots,u_{N})$, consider the system of It{\^o} stochastic integral equations
\begin{equation} \label{EqPrelimitDynamics}
\begin{split}
	X^{N}_{i}(t) &= \xi^{N}_{i} + \int_{0}^{t} b\left(s,X^{N}_{i}(s),\mu^{N}(s),u_{i}(s)\right)ds \\
	&\quad + \int_{0}^{t} \sigma\left(s,X^{N}_{i}(s),\mu^{N}(s)\right)dW^{N}_{i}(s),\quad t\in [0,T],
\end{split}
\end{equation}
$i\in\{1,\ldots,N\}$, where $\mu^{N}(s)$ is the empirical measure of the processes $X^{N}_{1},\ldots,X^{N}_{N}$ at time $s\in [0,T]$, that is,
\[
	\mu^{N}_{\omega}(s)\doteq \frac{1}{N} \sum_{j=1}^{N} \delta_{X^{N}_{j}(s,\omega)},\quad \omega\in \Omega_{N}.
\]
The process $X^{N}_{i}$ describes the evolution of the private state of player $i$ if he/she uses strategy $u_{i}$ while the other players use strategies $u_{j}$, $j\neq i$. Thanks to assumptions \hypref{HypLipschitz} and \hypref{HypGrowth}, the system of equations \eqref{EqPrelimitDynamics} possesses a unique solution in the following sense: given any strategy vector $\boldsymbol{u}= (u_{1},\ldots,u_{N})$, there exists a vector $(X^{N}_{1},\ldots,X^{N}_{N})$ of continuous $\mathbb{R}^{d}$-valued $(\mathcal{F}^{N}_{t})$-adapted processes  such that \eqref{EqPrelimitDynamics} holds $\Prb_{N}$-almost surely, and $(X^{N}_{1},\ldots,X^{N}_{N})$ is unique (up to $\Prb_{N}$-indistinguishability) among all continuous $(\mathcal{F}^{N}_{t})$-adapted solutions.

The following estimates on the controlled state process and the associated empirical measure process will be useful in Section~\ref{SectConvergence}.

\begin{lemma} \label{LemmaGrowthBounds}
There exists a finite constant $C_{T,K}$ depending on $T$, $K$, but not on $N$, such that if $\boldsymbol{u}^{N}= (u^{N}_{1},\ldots,u^{N}_{N})$ is a strategy vector for the $N$-player game and $(X^{N}_{1},\ldots,X^{N}_{N})$ the solution of the system \eqref{EqPrelimitDynamics} under $\boldsymbol{u}^{N}$, then
\begin{multline*}
	\sup_{t\in [0,T]} \Mean_{N}\left[ |X^{N}_{i}(t)|^{2} \right] \\
	\leq C_{T,K} \left(1 + \Mean_{N}\left[ |\xi^{N}_{i}|^{2} \right] + \Mean_{N}\left[ \int_{0}^{T} \left(\mathrm{d}_{2}\left(\mu^{N}(t),\delta_{0}\right)^{2} + |u^{N}_{i}(t)|^{2}\right) dt \right] \right)
\end{multline*}
for every $i\in \{1,\ldots,N\}$, and
\begin{multline*}
	\sup_{t\in [0,T]} \Mean_{N}\left[ \mathrm{d}_{2}\left(\mu^{N}(t),\delta_{0}\right)^{2} \right] \leq \sup_{t\in [0,T]} \Mean_{N}\left[ \frac{1}{N}\sum_{j=1}^{N}|X^{N}_{j}(t)|^{2} \right] \\
	\leq C_{T,K} \left(1+  \frac{1}{N}\sum_{j=1}^{N} \Mean_{N}\left[ |\xi^{N}_{j}|^{2} + \int_{0}^{T} |u^{N}_{j}(t)|^{2}dt \right] \right).
\end{multline*}
\end{lemma}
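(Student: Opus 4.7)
The plan is to derive a Gronwall-type inequality from the stochastic integral equation \eqref{EqPrelimitDynamics} and then exploit it twice: once for a fixed index $i$ to obtain the first bound, and once in the form of an average over $i\in\{1,\ldots,N\}$ to obtain the second bound, taking advantage of the identity $\mathrm{d}_{2}(\mu^{N}(t),\delta_{0})^{2}=\frac{1}{N}\sum_{j=1}^{N}|X^{N}_{j}(t)|^{2}$.

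First I would fix $i$ and $t\in[0,T]$ and apply to \eqref{EqPrelimitDynamics} the Cauchy--Schwarz inequality on the drift term and It\^{o}'s isometry on the diffusion term. Using the sublinear growth \hypref{HypGrowth} and the identity $\sqrt{\int|y|^{2}\nu(dy)}=\mathrm{d}_{2}(\nu,\delta_{0})$, I would get, for some constant $\tilde{C}$ depending only on $T$ and $K$,
\[
\Mean_{N}\bigl[|X^{N}_{i}(t)|^{2}\bigr]\leq \tilde{C}\left(1+\Mean_{N}\bigl[|\xi^{N}_{i}|^{2}\bigr]+\int_{0}^{t}\Mean_{N}\Bigl[|X^{N}_{i}(s)|^{2}+|u^{N}_{i}(s)|^{2}+\mathrm{d}_{2}(\mu^{N}(s),\delta_{0})^{2}\Bigr]\,ds\right).
\]
A standard localization by the stopping times $\tau_{M}=\inf\{t\leq T:|X^{N}_{i}(t)|\geq M\}\wedge T$ makes this rigorous, since each $\Mean_{N}[|X^{N}_{i}(t\wedge\tau_{M})|^{2}]$ is a priori finite, and one recovers the stated bound by sending $M\to\infty$ and invoking Fatou.

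For the first estimate, I would treat the terms involving $|u^{N}_{i}|^{2}$ and $\mathrm{d}_{2}(\mu^{N},\delta_{0})^{2}$ as given data and apply the integral form of Gronwall's lemma to $t\mapsto\Mean_{N}[|X^{N}_{i}(t)|^{2}]$; this delivers the first inequality with $C_{T,K}=\tilde{C}e^{\tilde{C}T}$ (possibly after enlarging constants). For the second estimate, I would average the displayed Gronwall-type inequality over $i=1,\ldots,N$ and use the key identity $\Mean_{N}\bigl[\frac{1}{N}\sum_{j=1}^{N}|X^{N}_{j}(s)|^{2}\bigr]=\Mean_{N}[\mathrm{d}_{2}(\mu^{N}(s),\delta_{0})^{2}]$, which holds because the optimal coupling between any $\nu\in\prbms[2]{\mathbb{R}^{d}}$ and $\delta_{0}$ is $\nu\otimes\delta_{0}$. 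This yields a \emph{closed} inequality
\[
\Mean_{N}\bigl[\mathrm{d}_{2}(\mu^{N}(t),\delta_{0})^{2}\bigr]\leq \tilde{C}\biggl(1+\frac{1}{N}\sum_{j=1}^{N}\Mean_{N}\bigl[|\xi^{N}_{j}|^{2}\bigr]+\frac{1}{N}\sum_{j=1}^{N}\Mean_{N}\!\int_{0}^{T}|u^{N}_{j}(s)|^{2}ds+2\!\int_{0}^{t}\!\Mean_{N}\bigl[\mathrm{d}_{2}(\mu^{N}(s),\delta_{0})^{2}\bigr]ds\biggr),
\]
to which Gronwall applies directly; taking the supremum in $t$ and recalling that $\mathrm{d}_{2}(\mu^{N}(t),\delta_{0})^{2}=\frac{1}{N}\sum_{j=1}^{N}|X^{N}_{j}(t)|^{2}$ deals with both inequalities in the second display.

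There is no serious obstacle here: the argument is a routine stochastic Gronwall estimate. The only delicate points are (i) making sure the sublinear growth is applied in the right variable so that the measure-dependence is absorbed through the square Wasserstein distance to $\delta_{0}$, and (ii) noting that averaging over $i$ closes the Gronwall loop for the empirical-measure bound. Both are handled by the identity relating $\mathrm{d}_{2}(\mu^{N}(t),\delta_{0})^{2}$ to the empirical second moment of the $X^{N}_{j}(t)$, which is what makes the constant $C_{T,K}$ independent of $N$.
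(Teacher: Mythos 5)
Your proposal is correct and matches the paper's proof essentially step for step: both derive a Gronwall-type bound via the growth condition \hypref{HypGrowth}, H\"older/Cauchy--Schwarz, It\^o's isometry, and Fubini, then for the first inequality treat the $\mathrm{d}_2(\mu^N,\delta_0)^2$ and $|u^N_i|^2$ terms as data, and for the second inequality average over $i$ and close the loop with the identity $\mathrm{d}_2(\mu^N(t),\delta_0)^2=\frac{1}{N}\sum_j|X^N_j(t)|^2$. The localization step you add is a reasonable (and slightly more careful) way to justify the a priori finiteness the paper takes for granted from standard $L^2$ existence theory; for the averaged estimate one should of course stop with respect to $\sum_j|X^N_j|^2$ rather than a single $|X^N_i|$.
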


\begin{proof}
By Jensen's inequality, H{\"o}lder's inequality, It{\^o}'s isometry, assumption~\hypref{HypGrowth}, and the Fubini-Tonelli theorem, we have for every $t\in [0,T]$,
\[
\begin{split}
	\Mean_{N}\left[ |X^{N}_{i}(t)|^{2} \right] &\leq 3\Mean_{N}\left[ |\xi^{N}_{i}|^{2} \right] + 12(T+1)K^{2} \int_{0}^{t} \Mean_{N}\left[ |X^{N}_{i}(s)|^{2} \right] ds\\
	&\hspace{-2ex}+ 12(T+1)K^{2} \Mean_{N}\left[\int_{0}^{T}\left(1 + \mathrm{d}_{2}\left(\mu^{N}(s),\delta_{0}\right)^{2} + |u^{N}_{i}(s)|^{2} \right)ds \right],
\end{split}
\]
and the first estimate follows by Gronwall's lemma.

By definition of the square Wasserstein metric $\mathrm{d}_{2}$, we have for every $t\in [0,T]$, every $\omega\in \Omega_{N}$,
\[
	\mathrm{d}_{2}\left(\mu^{N}_{\omega}(t),\delta_{0}\right)^{2} = \frac{1}{N}\sum_{j=1}^{N}|X^{N}_{j}(t,\omega)|^{2}.
\]
Thus, using again assumption~\hypref{HypGrowth} and the same inequalities as above, we have for every $t\in [0,T]$,
\begin{align*}
	& \Mean_{N}\left[ \frac{1}{N}\sum_{j=1}^{N}|X^{N}_{j}(t)|^{2} \right] \\
%\begin{split}
%	&\leq 3\Mean_{N}\left[ \frac{1}{N}\sum_{j=1}^{N} |\xi^{N}_{j}|^{2} \right] + 12(T+1)K^{2} \Mean_{N}\left[\int_{0}^{T}\left(1 + \frac{1}{N}\sum_{j=1}^{N} |u^{N}_{j}(s)|^{2} \right)ds \right] \\
%	&\quad + 12(T+1)K^{2} \int_{0}^{t} \Mean_{N}\left[\mathrm{d}_{2}\left(\mu^{N}(s),\delta_{0}\right)^{2} + \frac{1}{N}\sum_{j=1}^{N}|X^{N}_{j}(s)|^{2} \right] ds.
%\end{split}\\
\begin{split}
	&\leq 3\Mean_{N}\left[ \frac{1}{N}\sum_{j=1}^{N} |\xi^{N}_{j}|^{2} \right] + 12(T+1)K^{2} \int_{0}^{T} \Mean_{N}\left[ 1 + \frac{1}{N}\sum_{j=1}^{N} |u^{N}_{j}(s)|^{2} \right] ds \\
	&\quad + 24(T+1)K^{2} \int_{0}^{t} \Mean_{N}\left[\frac{1}{N}\sum_{j=1}^{N}|X^{N}_{j}(s)|^{2} \right] ds,
\end{split}
\end{align*}
and we conclude again by Gronwall's lemma. The constant $C_{T,K}$ for both estimates need not be greater than $12(T\vee 1)(T+1)(K\vee 1)^{2} \exp\left(24(T\!+\!1)K^{2}T\right)$.
\end{proof}

\begin{lemma} \label{LemmaGrowthBounds2}
Let $p \geq 2$. Then there exists a finite constant $\tilde{C}_{p,T,K,d}$ depending on $p$, $T$, $K$, $d$, but not on $N$ such that if $\boldsymbol{u}^{N}= (u^{N}_{1},\ldots,u^{N}_{N})$ is a strategy vector for the $N$-player game and $(X^{N}_{1},\ldots,X^{N}_{N})$ the solution of the system \eqref{EqPrelimitDynamics} under $\boldsymbol{u}^{N}$, then 
\begin{multline*}
	\Mean_{N}\left[\sup_{t\in [0,T]} \mathrm{d}_{2}\left(\mu^{N}(t),\delta_{0}\right)^{p} \right] \leq \frac{1}{N}\sum_{i=1}^{N} \Mean_{N}\left[ \|X^{N}_{i}\|_{\mathcal{X}}^{p} \right] \\
	\leq \tilde{C}_{p,T,K,d} \left(1 + \frac{1}{N}\sum_{i=1}^{N} \Mean_{N}\left[ |\xi^{N}_{i}|^{p} + \int_{0}^{T} |u^{N}_{i}(t)|^{p} dt \right] \right).
\end{multline*}
\end{lemma}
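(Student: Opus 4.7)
The first inequality follows from pure measure-theoretic considerations: since $p/2 \geq 1$, Jensen's inequality applied to $x\mapsto x^{p/2}$ gives
\[
	\mathrm{d}_{2}\bigl(\mu^{N}(t),\delta_{0}\bigr)^{p} = \left(\frac{1}{N}\sum_{j=1}^{N}|X^{N}_{j}(t)|^{2}\right)^{p/2} \leq \frac{1}{N}\sum_{j=1}^{N}|X^{N}_{j}(t)|^{p},
\]
and then $\sup_{t}\frac{1}{N}\sum_{j} |X^{N}_{j}(t)|^{p} \leq \frac{1}{N}\sum_{j}\|X^{N}_{j}\|_{\mathcal{X}}^{p}$. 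So I would open with one display deriving this bound.

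For the second inequality, the plan is the standard $L^{p}$ moment estimate for SDEs, performed simultaneously for all $N$ players and closed by Gronwall on the averaged moment. Starting from \eqref{EqPrelimitDynamics}, I would apply the elementary inequality $(a+b+c)^{p}\leq 3^{p-1}(a^{p}+b^{p}+c^{p})$ to bound $\|X^{N}_{i}\|_{\mathcal{X}}^{p}$ by the sum of $|\xi^{N}_{i}|^{p}$, the drift term raised to $p$, and the stochastic integral term raised to $p$. Jensen's inequality converts the drift sup into $T^{p-1}\int_{0}^{T}|b(s,X^{N}_{i}(s),\mu^{N}(s),u^{N}_{i}(s))|^{p}ds$, and the Burkholder-Davis-Gundy inequality applied to the vector-valued martingale bounds the stochastic integral sup by (a constant depending on $p$ and $d$ times) $T^{p/2-1}\int_{0}^{T}|\sigma(s,X^{N}_{i}(s),\mu^{N}(s))|^{p}ds$. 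I would then invoke assumption \hypref{HypGrowth} together with the identity $\sqrt{\int|y|^{2}\mu^{N}(s,dy)} = \mathrm{d}_{2}(\mu^{N}(s),\delta_{0})$ to obtain, after taking expectations,
\[
	\Mean_{N}\bigl[\|X^{N}_{i}\|_{\mathcal{X}}^{p}\bigr] \leq C\Mean_{N}\bigl[|\xi^{N}_{i}|^{p}\bigr] + C\int_{0}^{T}\Mean_{N}\bigl[1 + |X^{N}_{i}(s)|^{p} + |u^{N}_{i}(s)|^{p} + \mathrm{d}_{2}(\mu^{N}(s),\delta_{0})^{p}\bigr]ds
\]
for some constant $C = C_{p,T,K,d}$ not depending on $N$ or $i$.

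The key trick is now to average this bound over $i\in\{1,\dots,N\}$ in order to close the loop created by the coupling through $\mu^{N}$. Setting $\Psi_{N}(t)\doteq \frac{1}{N}\sum_{i=1}^{N}\Mean_{N}[\sup_{s\leq t}|X^{N}_{i}(s)|^{p}]$ and using the first inequality of the lemma (in its time-local form) to dominate $\Mean_{N}[\mathrm{d}_{2}(\mu^{N}(s),\delta_{0})^{p}]$ by $\Psi_{N}(s)$, the averaged inequality becomes
\[
	\Psi_{N}(t) \leq C\bigl(1 + \bar{A}_{N}\bigr) + 2C\int_{0}^{t}\Psi_{N}(s)\,ds,
\]
where $\bar{A}_{N}\doteq \frac{1}{N}\sum_{i=1}^{N}\Mean_{N}[|\xi^{N}_{i}|^{p} + \int_{0}^{T}|u^{N}_{i}(s)|^{p}ds]$. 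Gronwall's lemma then yields the desired bound on $\Psi_{N}(T)$, which is exactly the right-hand side of the second inequality. The only subtlety I anticipate is the bookkeeping to ensure that $|X^{N}_{i}(s)|^{p}$ inside the integral is indeed controlled by $\Psi_{N}(s)$ (which uses that $\Mean_{N}[|X^{N}_{i}(s)|^{p}] \leq \Mean_{N}[\sup_{r\leq s}|X^{N}_{i}(r)|^{p}]$ term by term before averaging), but no new ideas beyond those already used in Lemma~\ref{LemmaGrowthBounds} are required.
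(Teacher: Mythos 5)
Your proposal is correct and follows essentially the same route as the paper: Jensen for the first inequality, then a Burkholder--Davis--Gundy/H\"older $L^{p}$ moment estimate on $\sup_{s\le t}|X^{N}_{i}(s)|^{p}$, averaged over $i$ to absorb the coupling through $\mu^{N}$, and closed by Gronwall's lemma. No meaningful differences from the paper's argument.
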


\begin{proof}
The inequality
\[
	\Mean_{N}\left[\sup_{t\in [0,T]} \mathrm{d}_{2}\left(\mu^{N}(t),\delta_{0}\right)^{p} \right] \leq \frac{1}{N}\sum_{i=1}^{N} \Mean_{N}\left[ \|X^{N}_{i}\|_{\mathcal{X}}^{p} \right]
\]
follows by \eqref{EqEmpWasserstein} and Jensen's inequality. In verifying the second part of the assertion, we may assume that
\[
	\frac{1}{N}\sum_{i=1}^{N}\Mean_{N}\left[ |\xi^{N}_{i}|^{p} + \int_{0}^{T} |u^{N}_{i}(t)|^{p} dt \right] < \infty.
\]
By Jensen's inequality, H{\"o}lder's inequality, \hypref{HypGrowth}, the Fubini-Tonelli theorem, and the Burkholder-Davis-Gundy inequalities, we have for every $t\in [0,T]$,
\begin{align*}
	& \frac{1}{N}\sum_{i=1}^{N} \Mean_{N}\left[\sup_{s\in [0,t]} |X^{N}_{i}(s)|^{p} \right] \\
%\begin{split}
%	&\leq \frac{3^{p-1}}{N}\sum_{i=1}^{N} \left( \Mean_{N}\left[ |\xi^{N}_{i}|^{p} \right] + T^{p-1} \int_{0}^{t} \Mean_{N}\left[ \left|b\bigl(s,X^{N}_{i}(s),\mu^{N}(s),u^{N}_{i}(s)\bigr)\right|^{p} \right] ds\right.\\
%	&\qquad  \left. + \hat{C}_{p,d}\, T^{p/2-1} \int_{0}^{t} \Mean_{N}\left[ \left|\sigma\bigl(s,X^{N}_{i}(s),\mu^{N}(s)\bigr)\right|^{p} \right] ds \right)
%\end{split}\\
%\begin{split}
%	&\leq 3^{p-1}\Mean_{N}\left[\frac{1}{N}\sum_{i=1}^{N} |\xi^{N}_{i}|^{p} \right] \\
%	&\;+ \frac{12^{p-1} T^{p-1} K^{p}}{N}\sum_{i=1}^{N} \int_{0}^{t} \Mean_{N}\left[ 1 + |X^{N}_{i}(s)|^{p} + \mathrm{d}_{2}\left(\mu^{N}(s),\delta_{0}\right)^{p} + |u^{N}_{i}(s)|^{p} \right] ds \\
%	&\quad+ \frac{9^{p-1} T^{p/2-1} K^{p} \hat{C}_{p,d}}{N}\sum_{i=1}^{N} \int_{0}^{t} \Mean_{N}\left[ 1 + |X^{N}_{i}(s)|^{p} + \mathrm{d}_{2}\left(\mu^{N}(s),\delta_{0}\right)^{p} \right] ds
%\end{split} \\
%\begin{split}
%	&\leq \hat{C}_{p,T,K,d}\left(1 + \frac{1}{N}\sum_{i=1}^{N}\Mean_{N}\left[|\xi^{N}_{i}|^{p} + \int_{0}^{T}  |u^{N}_{i}(s)|^{p} \right] ds \right) \\
%	&\quad + \hat{C}_{p,T,K,d} \int_{0}^{t} \frac{1}{N}\sum_{i=1}^{N} \Mean_{N}\left[ |X^{N}_{i}(s)|^{p} + \mathrm{d}_{2}\left(\mu^{N}(s),\delta_{0}\right)^{p} \right] ds,
%\end{split} \\
\begin{split}
	&\leq \hat{C}_{p,T,K,d} \left(1 + \frac{1}{N}\sum_{i=1}^{N}\Mean_{N}\left[|\xi^{N}_{i}|^{p} + \int_{0}^{T}  |u^{N}_{i}(s)|^{p} ds \right] \right) \\
	&\quad +\, 2\hat{C}_{p,T,K,d} \int_{0}^{t} \frac{1}{N}\sum_{i=1}^{N} \Mean_{N}\left[ \sup_{s\in [0,\tilde{s}]}|X^{N}_{i}(s)|^{p} \right] d\tilde{s},
\end{split}
\end{align*}
where $\hat{C}_{p,T,K,d}\doteq 12^{p-1} (T\vee 1)^{p} (K\vee 1)^{p}(1+\hat{C}_{p,d})$ and $\hat{C}_{p,d}$, which depends only on $p$ and $d$, is the finite ``universal'' constant from the Burkholder-Davis-Gundy inequalities \citep[for instance, Theorem~3.3.28 and Remark~3.3.30 in ][pp.\,166-167]{karatzasshreve91}. The assertion now follows thanks to Gronwall's lemma.
\end{proof}

Player $i$ evaluates a strategy vector $\boldsymbol{u} = (u_{1},\ldots,u_{N})$ according to the cost functional
\[
	J^{N}_{i}\left(\boldsymbol{u}\right)\doteq \Mean_{N}\left[ \int_{0}^{T} f\left(s,X^{N}_{i}(s),\mu^{N}(s),u_{i}(s)\right)ds + F\left(X^{N}_{i}(T),\mu^{N}(T) \right) \right],
\]
where $(X^{N}_{1},\ldots,X^{N}_{N})$ is the solution of the system \eqref{EqPrelimitDynamics} under $\boldsymbol{u}$ and $\mu^{N}$ is the empirical measure process induced by $(X^{N}_{1},\ldots,X^{N}_{N})$.

Given a strategy vector $\boldsymbol{u} = (u_{1},\ldots,u_{N})$ and an individual strategy $v\in \mathcal{H}_{2}((\mathcal{F}^{N}_{t}),\Prb_{N};\Gamma)$, let $[\boldsymbol{u}^{-i},v]\doteq (u_{1},\ldots,u_{i-1},v,u_{i+1},\ldots,u_{N})$ indicate the strategy vector that is obtained from $\boldsymbol{u}$ by replacing $u_{i}$, the strategy of player $i$, with $v$. Let $(\mathcal{F}^{N,i}_{t})$ denote the filtration generated by $\vartheta^{N}_{i}$, $\xi^{N}_{i}$, and the Wiener process $W^{N}_{i}$, that is,
\[
	\mathcal{F}^{N,i}_{t}\doteq \sigma\left(\vartheta^{N}_{i},\xi^{N}_{i},W^{N}_{i}(s) : s\in [0,t]\right),\quad t\in [0,T].
\]
The filtration $(\mathcal{F}^{N,i}_{t})$ represents the local information available to player $i$. Clearly, $(\mathcal{F}^{N,i}_{t}) \subset \mathcal{F}^{N}_{t}$ and $\mathcal{H}_{2}((\mathcal{F}^{N,i}_{t}),\Prb_{N};\Gamma) \subset \mathcal{H}_{2}((\mathcal{F}^{N}_{t}),\Prb_{N};\Gamma)$. We may refer to the elements of $\mathcal{H}_{2}((\mathcal{F}^{N,i}_{t}),\Prb_{N};\Gamma)$ as narrow strategies or narrow individual strategies for player $i$.  

\begin{defn} \label{DefNash}
	Let $\epsilon \geq 0$, $u_{1},\ldots,u_{N}\in \mathcal{H}_{2}((\mathcal{F}^{N}_{t}),\Prb_{N};\Gamma)$. The strategy vector $\boldsymbol{u} \doteq (u_{1},\ldots,u_{N})$ is called a \emph{local $\epsilon$-Nash equilibrium} for the $N$-player game if for every $i\in \{1,\ldots,N\}$, every $v\in \mathcal{H}_{2}((\mathcal{F}^{N,i}_{t}),\Prb_{N}; \Gamma)$,
\begin{equation} \label{EqDefNash}
	J^{N}_{i}(\boldsymbol{u}) \leq J^{N}_{i}\left([\boldsymbol{u}^{-i},v]\right) + \epsilon.
\end{equation}

If inequality \eqref{EqDefNash} holds for all $v\in \mathcal{H}_{2}((\mathcal{F}^{N}_{t}),\Prb_{N}; \Gamma)$, then $\boldsymbol{u}$ is called an \emph{$\epsilon$-Nash equilibrium}.

If $\boldsymbol{u}$ is a (local) $\epsilon$-Nash equilibrium with $\epsilon = 0$, then $\boldsymbol{u}$ is called a \emph{(local) Nash equilibrium}.
\end{defn}

\begin{rem} \label{RemNash}
In Definition~\ref{DefNash}, Nash equilibria are defined with respect to stochastic open-loop strategies. This is the same notion as the one used in the probabilistic approach to mean field games; see \citet{carmonadelarue13}. A Nash equilibrium in stochastic open-loop strategies may be induced by a Markov feedback strategy (or a more general closed-loop strategy); still, it need not correspond to a Nash equilibrium in feedback strategies. Given a vector of feedback strategies, varying the strategy of exactly one player means that the feedback functions defining the strategies of the other players are kept frozen. Since in general the state processes of the other players depend on the state process of the deviating player (namely, through the empirical measure of the system), the strategies of the other players seen as control processes may change when one player deviates. This is in contrast with the stochastic open-loop formulation where the control processes of the other players are frozen when one player varies her/his strategy. Now, suppose we had a Nash equilibrium in Markov feedback strategies for the $N$-player game. If the feedback functions defining that Nash equilibrium depend only on time, the current individual state, and the current empirical measure, and if they are regular in the sense of being Lipschitz continuous, then they will induce an $\epsilon_{N}$-Nash equilibrium in stochastic open-loop strategies with $\epsilon_{N}$ also depending on the Lipschitz constants of the feedback functions. Here, we do not address the question of when Nash equilibria in regular feedback strategies exist nor of how their Lipschitz constants would depend on the number of players $N$. Neither do we address the more general question of convergence of $N$-player Nash equilibria in feedback strategies, regular or not. That difficult problem was posed in \citet{lasrylions06b, lasrylions07} and is beyond the scope of this work.
\end{rem}

\begin{rem} \label{RemNashlocalext}
The random variables $\vartheta^{N}_{i}$ appearing in the definition of the local information filtrations $(\mathcal{F}^{N,i}_{t})$ are a technical device for randomization. They will be used in the sequel only in two places, namely in the proof of Proposition~\ref{PropNashExistence} on existence of local $\epsilon$-Nash equilibria, where they allow to pass from optimal relaxed controls to nearly optimal ordinary controls, and in the proof of Lemma~\ref{LemmaCoupling}, where they serve to generate a coupling of initial conditions. The presence of the random variables $\vartheta^{N}_{i}$ causes no loss of generality in the following sense. Suppose that $\boldsymbol{u} \doteq (u_{1},\ldots,u_{N})$ is a strategy vector adapted to the filtration generated by $\xi^{N}_{1},\ldots,\xi^{N}_{N}$ and the Wiener processes $W^{N}_{1},\ldots,W^{N}_{N}$ such that, for some $\epsilon \geq 0$, every $i\in \{1,\ldots,N\}$, inequality \eqref{EqDefNash} holds for all individual strategies $v$ that are adapted to the filtration generated by $\xi^{N}_{i}$ and the Wiener process $W^{N}_{i}$. Then inequality \eqref{EqDefNash} holds for all $v\in \mathcal{H}_{2}((\mathcal{F}^{N,i}_{t}),\Prb_{N}; \Gamma)$; hence $\boldsymbol{u}$ is a local $\epsilon$-Nash equilibrium. To check this, take conditional expectation with respect to $\vartheta^{N}_{i}$ inside the expectation defining the cost functional $J^{N}_{i}$ and use the independence of $\vartheta^{N}_{i}$ from the $\sigma$-algebra generated by $\xi^{N}_{1},\ldots,\xi^{N}_{N}$ and $W^{N}_{1},\ldots,W^{N}_{N}$. An analogous reasoning applies to the situation of non-local (approximate) Nash equilibria provided the strategy vector $\boldsymbol{u}$ is independent of the family $(\vartheta^{N}_{i})_{i\in \{1,\ldots,N\}}$.
\end{rem}

By Definition~\ref{DefNash}, an $\epsilon$-Nash equilibrium is also a local $\epsilon$-Nash equilibrium. Observe that the individual strategies of a local $\epsilon$-Nash equilibrium are adapted to the full filtration $(\mathcal{F}^{N}_{t})$; only the competitor strategies in the verification of the local equilibrium property have to be narrow strategies, that is, strategies adapted to one of the smaller filtrations $(\mathcal{F}^{N,1}_{t}),\ldots,(\mathcal{F}^{N,N}_{t})$.

If $\xi^{N}_{1},\ldots,\xi^{N}_{N}$ are independent and $\boldsymbol{u} = (u_{1},\ldots,u_{N})$ is a vector of narrow strategies, that is, $u_{i}\in \mathcal{H}_{2}((\mathcal{F}^{N,i}_{t}),\Prb_{N};\Gamma)$ for every $i\in \{1,\ldots,N\}$, then $(\xi^{N}_{1},u_{1},W^{N}_{1}),\ldots,(\xi^{N}_{N},u_{N},W^{N}_{N})$, interpreted as $\mathbb{R}^{d}\times \mathcal{R}_{2}\times \mathcal{W}$-valued random variables, are independent. This allows to deduce existence of local approximate Nash equilibria through Fan's fixed point theorem in a way similar to that for one-shot games \citep[cf.\ Appendix~8.1 in][]{cardaliaguet13}. For simplicity, we give the result for a compact action space, bounded coefficients and in the fully symmetric situation. In the sequel, Proposition~\ref{PropNashExistence} will be used only to provide an example of a situation in which all the hypotheses of our main result can be easily verified. 

\begin{prop} \label{PropNashExistence}
In addition to \hyprefall, assume that $\Gamma$ is compact and that $b$, $\sigma$, $f$, $F$ are bounded. Suppose that $\xi^{N}_{1},\ldots,\xi^{N}_{N}$ are independent and identically distributed. Given any $\varepsilon > 0$, there exist narrow strategies $u^{\epsilon}_{i}\in \mathcal{H}_{2}((\mathcal{F}^{N,i}_{t}),\Prb_{N};\Gamma)$, $i\in \{1,\ldots,N\}$, such that $\boldsymbol{u}^{\epsilon} \doteq (u^{\epsilon}_{1},\ldots,u^{\epsilon}_{N})$ is a local $\epsilon$-Nash equilibrium for the $N$-player game and the random variables $(\xi^{N}_{1},u^{\epsilon}_{1},W^{N}_{1}),\ldots,(\xi^{N}_{N},u^{\epsilon}_{N},W^{N}_{N})$ are independent and identically distributed.
\end{prop}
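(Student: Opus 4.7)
The plan is to apply the Kakutani--Fan--Glicksberg fixed point theorem in a symmetric relaxed-control formulation and then to pass from the resulting relaxed Nash equilibrium to an ordinary local $\epsilon$-Nash equilibrium by chattering, using the uniform randomizers $\vartheta^{N}_{i}$ as the source of randomness that keeps the i.i.d.\ structure of the individual strategies intact.

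First I would extend the dynamics \eqref{EqPrelimitDynamics} and the cost functionals $J^{N}_{i}$ to narrow \emph{relaxed} strategies: for each player $i$, an admissible narrow relaxed strategy is an $\mathcal{R}$-valued random variable $\rho_{i}$ adapted to $(\mathcal{F}^{N,i}_{t})$, with the drift and running cost in \eqref{EqPrelimitDynamics} read through the disintegration $\dot\rho_{i,s}(d\gamma)$. Since $\Gamma$ is compact, $\mathcal{R}_{2}=\mathcal{R}$ is itself a compact Polish space, and existence, uniqueness and continuous dependence of the relaxed solutions on the joint law of $(\xi^{N}_{i},\rho_{i},W^{N}_{i})_{i}$ are standard under \hyprefall\ together with the boundedness assumption. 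By symmetry I would identify a narrow relaxed strategy of player $i$ with the joint law $\lambda$ of $(\xi^{N}_{i},\rho_{i},W^{N}_{i},\vartheta^{N}_{i})$, and let $\Lambda$ denote the set of such laws whose marginal on the last three factors is the prescribed (player-independent) one and which satisfy the adaptedness constraint. Richness of the uniform randomizer $\vartheta^{N}_{i}$ allows the representation of arbitrary mixtures of two elements of $\Lambda$ by another adapted strategy, so $\Lambda$ is convex; compactness of $\mathcal{R}$ and the fixed marginals make $\Lambda$ weakly compact.

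I would then define the symmetric best-response correspondence $\Phi\!:\Lambda\rightrightarrows\Lambda$ by letting $\Phi(\lambda)$ be the set of elements of $\Lambda$ that are distributions of narrow relaxed best responses for player $1$ when players $2,\dots,N$ independently use narrow relaxed strategies with common distribution $\lambda$, each realized through her own local data $(\vartheta^{N}_{i},\xi^{N}_{i},W^{N}_{i})$. Non-emptiness of $\Phi(\lambda)$ follows from compactness of $\Lambda$ and continuity of the relaxed cost in the joint strategy distribution; convexity of $\Phi(\lambda)$ from the fact that the cost is affine in player~$1$'s marginal once the others' joint law is frozen; and the closed-graph property from the stability of the coupled relaxed $N$-player SDEs and cost functionals under weak convergence of joint laws, by arguments standard in relaxed control theory \citep[cf.][]{elkarouietalii87,kushner90}. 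Kakutani--Fan--Glicksberg then yields a fixed point $\lambda^{*}\in\Phi(\lambda^{*})$, i.e., a relaxed narrow Nash equilibrium $(\rho^{*}_{1},\dots,\rho^{*}_{N})$ with i.i.d.\ components $(\xi^{N}_{i},\rho^{*}_{i},W^{N}_{i})$.

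To finish, I would approximate each $\rho^{*}_{i}$ by an ordinary narrow strategy $u^{\epsilon}_{i}\in\mathcal{H}_{2}((\mathcal{F}^{N,i}_{t}),\Prb_{N};\Gamma)$ via the chattering lemma, constructing $u^{\epsilon}_{i}$ as the same symmetric measurable functional of player $i$'s local data $(\vartheta^{N}_{i},\xi^{N}_{i},W^{N}_{i})$ for every $i$; this guarantees that the triples $(\xi^{N}_{i},u^{\epsilon}_{i},W^{N}_{i})$ remain i.i.d. Boundedness and Lipschitz regularity of the coefficients, applied via Lemmas~\ref{LemmaGrowthBounds} and \ref{LemmaGrowthBounds2} to the coupled dynamics, make the cost change induced by replacing $\boldsymbol{\rho}^{*}$ with $\boldsymbol{u}^{\epsilon}$ uniformly of order at most $\epsilon/2$, even when one player deviates. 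Since every ordinary $v\in\mathcal{H}_{2}((\mathcal{F}^{N,i}_{t}),\Prb_{N};\Gamma)$ embeds via \eqref{ExControlRelaxation} into a narrow relaxed competitor, the relaxed Nash property of $\boldsymbol{\rho}^{*}$ combined with this uniform approximation yields the local $\epsilon$-Nash inequality \eqref{EqDefNash}. The hard part will be the closed-graph step for $\Phi$, since it requires simultaneous stability of all $N$ players' coupled relaxed dynamics and costs under weak convergence of the joint law of all strategies and initial data; the other ingredients are routine given the compactness of $\mathcal{R}$ and the boundedness of the coefficients.
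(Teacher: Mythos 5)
Your proposal follows essentially the same route as the paper's proof: a Fan/Kakutani fixed-point argument over a compact convex set of joint laws in a symmetric relaxed formulation (the paper works on $\mathcal{Y}\subset\prbms{\mathbb{R}^{d}\times\mathcal{R}\times\mathcal{W}}$, appeals to Fan's theorem, and checks continuity of the solution map $\Psi$ via a martingale-problem characterization), followed by a chattering-lemma approximation that converts the resulting relaxed Nash equilibrium into an ordinary local $\epsilon$-Nash equilibrium realized through a common measurable functional of each player's local data $(\vartheta^{N}_{i},\xi^{N}_{i},W^{N}_{i})$, so that the triples stay i.i.d. The only cosmetic differences are that you carry $\vartheta^{N}_{i}$ inside the fixed-point space and justify convexity by randomization whereas the paper omits $\vartheta$ from $\mathcal{Y}$ (convexity there is immediate) and introduces the randomizers only in the transfer step, and that you bound the cost change under chattering via Lipschitz estimates while the paper simply invokes uniform continuity of $J$ on the compact $\mathcal{Y}\times\mathcal{Y}$; note also a small slip where you fix ``the marginal on the last three factors''---it is the marginal on $(\xi^{N}_{i},W^{N}_{i},\vartheta^{N}_{i})$ that should be prescribed, not on $\rho_{i}$.
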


\begin{proof}
Since $\Gamma$ is compact by hypothesis, we have $\mathcal{R} = \mathcal{R}_{2}$ as topological spaces, and $\prbms{\mathcal{R}}$ is compact.

Let $\mathfrak{m}_{0}$ denote the common distribution of the initial states $\xi^{N}_{1},\ldots,\xi^{N}_{N}$; thus $\mathfrak{m}_{0}\in \prbms[2]{\mathbb{R}^{d}}$. With a slight abuse of notation, let $(\hat{X}(0),\rho,\hat{W})$ denote the restriction to $\mathbb{R}^{d}\times \mathcal{R}\times \mathcal{W}$ of the canonical process on $\mathcal{Z}$. Let $(\tilde{\mathcal{G}}_{t})$ indicate the corresponding canonical filtration, that is, $\tilde{\mathcal{G}}_{t}\doteq \sigma(\hat{X}(0),\rho(s),\hat{W}(s): s\leq t)$, $t\in [0,T]$. Let $\mathcal{Y}$ be the space of all $\nu\in \prbms{\mathbb{R}^{d}\times \mathcal{R}\times \mathcal{W}}$ such that $[\nu]_{1} = \mathfrak{m}_{0}$ and $\hat{W}$ is a $(\tilde{\mathcal{G}}_{t})$-Wiener process under $\nu$ (in particular, $\hat{W}(0) = 0$ $\nu$-almost surely). Then $\mathcal{Y}$ is a non-empty compact convex subset of $\prbms{\mathbb{R}^{d}\times \mathcal{R}\times \mathcal{W}}$, which in turn is contained in a locally convex topological linear space (under the topology of weak convergence of measures).

The proof proceeds in two steps. First, we show that there exists $\nu_{\ast}\in \mathcal{Y}$ such that $\otimes^{N} \nu_{\ast}$ corresponds to a local Nash equilibrium in relaxed controls on the canonical space $\mathcal{Z}^{N}$. In the second step, given any $\epsilon > 0$, we use $\nu_{\ast}$ to construct a local $\epsilon$-Nash equilibrium for the $N$-player game.

\textbf{First step}. Let $\nu,\bar{\nu}\in \mathcal{Y}$. Then there exists a unique $\Psi(\nu;\bar{\nu}) \in \prbms[2]{\mathcal{Z}^{N}}$ such that
\[
	\Psi(\nu;\bar{\nu}) = \Prb\circ \left(\boldsymbol{X},\boldsymbol{\rho},\boldsymbol{W} \right)^{-1},
\]
where $\boldsymbol{W} = (W_{1},\ldots,W_{N})$ is a vector of independent $d_{1}$-dimensional $(\mathcal{F}_{t})$-adapted Wiener processes defined on some stochastic basis $((\Omega,\mathcal{F},\Prb),(\mathcal{F}_{t}))$ satisfying the usual hypotheses and carrying a vector $\boldsymbol{\rho} = (\rho^{1},\ldots,\rho^{N})$ of $(\mathcal{F}_{t})$-adapted $\mathcal{R}$-valued random variables such that
\[
	\Prb\circ \left(\boldsymbol{X}(0),\boldsymbol{\rho},\boldsymbol{W} \right)^{-1} = \nu \otimes^{N-1}\bar{\nu},
\]
and $\boldsymbol{X} = (X_{1},\ldots,X_{N})$ is the vector of continuous $\mathbb{R}^{d}$-valued $(\mathcal{F}_{t})$-adapted processes determined through the system of equations
\begin{equation} \label{EqPrelimitDynamicsRel}
\begin{split}
	X_{i}(t) &= X_{i}(0) + \int_{\Gamma\times [0,t]} b\left(s,X_{i}(s),\frac{1}{N} \sum_{j=1}^{N} \delta_{X^{N}_{j}(s)},\gamma\right) \rho^{i}(d\gamma,ds) \\
	&\quad + \int_{0}^{t} \sigma\left(s,X_{i}(s),\frac{1}{N} \sum_{j=1}^{N} \delta_{X^{N}_{j}(s)}\right)dW_{i}(s),\quad t\in [0,T],
\end{split}
\end{equation}
$i\in\{1,\ldots,N\}$, which is the relaxed version of \eqref{EqPrelimitDynamics}. The mapping
\[
	(\nu,\bar{\nu})\mapsto \Psi(\nu;\bar{\nu})
\]
defines a continuous function $\mathcal{Y}\times \mathcal{Y} \to \prbms[2]{\mathcal{Z}^{N}}$. The continuity of $\Psi$ can be checked by using a martingale problem characterization of solutions to \eqref{EqPrelimitDynamicsRel}; cf.\ \citet{elkarouietalii87, kushner90}, and also Section~\ref{SectLimitSystems} below. Define a function $J\!: \mathcal{Y}\times \mathcal{Y} \rightarrow [0,\infty)$ by
\[
\begin{split}
	&J(\nu;\bar{\nu})\\
	&\doteq \Mean_{\Psi(\nu;\bar{\nu})}\left[ \int_{\Gamma\times [0,T]} f\left(s,\hat{X}_{1}(s),\hat{\mu}(s),\gamma\right)d\hat{\rho}^{1}(d\gamma,ds) + F\left(\hat{X}_{1}(T),\hat{\mu}(T) \right) \right],
\end{split}
\]
where $\hat{\mu}(s)\doteq \frac{1}{N} \sum_{j=1}^{N} \delta_{\hat{X}_{j}(s)}$ and $(\hat{X}_{1},\ldots,\hat{X}_{N})$, $(\hat{\rho}^{1},\ldots,\hat{\rho}^{N})$ are components of the canonical process on $\mathcal{Z}^{N}$ with the obvious interpretation. Thanks to the continuity of $\Psi$ and the boundedness and continuity of $f$, $F$, we have that $J$ is a continuous mapping on $\mathcal{Y}\times \mathcal{Y}$. On the other hand, for any fixed $\bar{\nu}\in \mathcal{Y}$, all $\nu,\tilde{\nu}\in \mathcal{Y}$, all $\lambda\in [0,1]$,
\begin{align*}
	\Psi\left(\lambda\nu + (1-\lambda)\tilde{\nu}; \bar{\nu}\right) &= \lambda\Psi(\nu;\bar{\nu}) + (1-\lambda)\Psi(\tilde{\nu};\bar{\nu}), \\
	J\left(\lambda\nu + (1-\lambda)\tilde{\nu}; \bar{\nu}\right) &= \lambda J(\nu;\bar{\nu}) + (1-\lambda) J(\tilde{\nu};\bar{\nu}).
\end{align*}

Define a function $\chi\!: \mathcal{Y}\rightarrow \Borel{\mathcal{Y}}$ by
\[
	\chi(\bar{\nu})\doteq \left\{\nu \in \mathcal{Y} : J(\nu;\bar{\nu}) = \min_{\tilde{\nu}\in \mathcal{Y}} J(\tilde{\nu};\bar{\nu}) \right\}.
\]
Observe that $\chi(\bar{\nu})$ is non-empty, compact and convex for every $\bar{\nu}\in \mathcal{Y}$. Thus, $\chi$ is well-defined as a mapping from $\mathcal{Y}$ to $\mathcal{K}(\mathcal{Y})$, the set of all non-empty compact convex subsets of $\mathcal{Y}$. Moreover, $\chi$ is upper semicontinuous in the sense that $\nu\in \chi(\bar{\nu})$ whenever $(\nu_{n})\subset \mathcal{Y}$, $(\bar{\nu}_{n})\subset \mathcal{Y}$ are sequences such that $\lim_{n\to\infty} \bar{\nu}_{n} = \bar{\nu}$, $\lim_{n\to\infty} \nu_{n} = \nu$, and $\nu_{n}\in \chi(\bar{\nu}_{n})$ for each $n\in\mathbb{N}$ (recall that $\mathcal{Y}$ is metrizable). We are therefore in the situation of Theorem~1 in \citet{fan52}, which guarantees the existence of a fixed point for $\chi$, that is, there exists $\nu_{\ast}\in \mathcal{Y}$ such that $\nu_{\ast}\in \chi(\nu_{\ast})$.

\textbf{Second step}. Let $\epsilon > 0$, and let $\nu_{\ast}\in \mathcal{Y}$ be such that $\nu_{\ast}\in \chi(\nu_{\ast})$. Let $\mathrm{d}_{\mathcal{Y}}$ be a compatible metric on the compact Polish space $\mathcal{Y}$, and define a corresponding metric on $\mathcal{Y}\times \mathcal{Y}$ by $\mathrm{d}_{\mathcal{Y}\times\mathcal{Y}}((\nu,\bar{\nu}),(\mu,\bar{\mu}))\doteq \mathrm{d}_{\mathcal{Y}}(\nu,\mu) + \mathrm{d}_{\mathcal{Y}}(\bar{\nu},\bar{\mu})$. Choose a stochastic basis $((\Omega,\mathcal{F},\Prb),(\mathcal{F}_{t}))$ satisfying the usual hypotheses and carrying a vector $\boldsymbol{W} = (W_{1},\ldots,W_{N})$ of independent $d_{1}$-dimensional $(\mathcal{F}_{t})$-adapted Wiener processes, a vector $\boldsymbol{\rho} = (\rho^{1},\ldots,\rho^{N})$ of $(\mathcal{F}_{t})$-adapted $\mathcal{R}$-valued random variables as well as a vector $\boldsymbol{\xi} = (\xi_{1},\ldots,\xi_{N})$ of $\mathbb{R}^{d}$-valued $\mathcal{F}_{0}$-measurable random variables such that
\[
	\Prb\circ \left(\boldsymbol{\xi},\boldsymbol{\rho},\boldsymbol{W} \right)^{-1} = \otimes^{N} \nu_{\ast}.
\]
For $i\in \{1,\ldots,N\}$, let $(\mathcal{F}^{\circ,i}_{t})$ be the filtration generated by $\xi_{i}$, $\rho^{i}$, $W_{i}$, that is, $\mathcal{F}^{\circ,i}_{t}\doteq \sigma(\xi_{i},\rho^{i}(s),W_{i}(s) : s\leq t)$, $t\in [0,T]$. By independence and a version of the chattering lemma \citep[for instance, Theorem~3.5.2 in][p.\,59]{kushner90}, for every $\delta > 0$, there exists a vector $\boldsymbol{\rho}^{\delta} = (\rho^{\delta,1},\ldots,\rho^{\delta,N})$ of $\mathcal{R}$-valued random variables such that:
\begin{enumerate}[(i)]
	\item for every $i\in \{1,\ldots,N\}$, $\rho^{\delta,i}$ is the relaxed control induced by a piece-wise constant $(\mathcal{F}^{\circ,i}_{t})$-progressively measurable $\Gamma$-valued process;
	
	\item the random variables $(\xi_{1},\rho^{\delta,1},W_{1}),\ldots,(\xi_{N},\rho^{\delta,N},W_{N})$ are independent and identically distributed;
	
	\item setting $\nu_{\delta}\doteq \Prb\circ (\xi_{1},\rho^{\delta,1},W_{1})^{-1}$, we have $\mathrm{d}_{\mathcal{Y}}(\nu_{\delta},\nu_{\ast}) \leq \delta$.
\end{enumerate}
Since $J$ is continuous on the compact space $\mathcal{Y}\times \mathcal{Y}$, it is uniformly continuous. We can therefore find $\delta = \delta(\epsilon) > 0$ such that
\begin{equation} \label{EqNashCosts}
	\left|J(\nu_{\delta};\nu_{\delta}) - J(\nu_{\ast};\nu_{\ast})\right| + \max_{\nu\in \mathcal{Y}} \left|J(\nu;\nu_{\delta}) - J(\nu;\nu_{\ast})\right| \leq \epsilon.
\end{equation}

The law $\nu_{\delta}$ (with $\delta = \delta(\epsilon)$) and the corresponding product measure can be reproduced on the stochastic basis of the $N$-player game. More precisely, there exists a measurable function $\psi\!: [0,T]\times [0,1]\times \mathbb{R}^{d}\times \mathcal{W} \rightarrow \Gamma$ such that, upon setting
\begin{align*}
	& u_{i}(t,\omega)\doteq \psi\left(t,\vartheta^{N}_{i}(\omega),\xi^{N}_{i}(\omega),W^{N}_{i}(\cdot,\omega) \right),& & (t,\omega)\in [0,T]\times \Omega_{N},&
\end{align*}
the following hold:
\begin{enumerate}[(i)]
	\item $u_{i}\in \mathcal{H}_{2}((\mathcal{F}^{N,i}_{t}),\Prb_{N}; \Gamma)$ for every $i\in \{1,\ldots,N\}$;
	
	\item $(\xi^{N}_{1},u_{1},W^{N}_{1}),\ldots,(\xi^{N}_{N},u_{N},W_{N})$, interpreted as $\mathbb{R}^{d}\times \mathcal{R}\times \mathcal{W}$-valued random variables, are independent and identically distributed;
	
	\item $\Prb_{N}\circ (\xi^{N}_{1},u_{1},W^{N}_{1})^{-1} = \nu_{\delta}$.
\end{enumerate}
The relaxed controls $\rho^{\delta,1},\ldots,\rho^{\delta,1}$ are, in fact, induced by $\Gamma$-valued processes that may be taken to be piece-wise constant in time with respect to a common equidistant grid in $[0,T]$. Existence of a function $\psi$ with the desired properties can therefore be established by iteration along the grid points, repeatedly invoking Theorem~6.10 in \citet[p.\,112]{kallenberg01} on measurable transfers; this procedure also yields progressive measurability of $\psi$.

Set $\boldsymbol{u}\doteq (u_{1},\ldots,u_{N})$ with $u_{i}\in \mathcal{H}_{2}((\mathcal{F}^{N,i}_{t}),\Prb_{N}; \Gamma)$ as above. Then
\[
	J^{N}_{1}(\boldsymbol{u}) = J(\nu_{\delta};\nu_{\delta}).
\]
Let $v\in \mathcal{H}_{2}((\mathcal{F}^{N,1}_{t}),\Prb_{N}; \Gamma)$, and set $\nu\doteq \Prb_{N}\circ (\xi^{N}_{1},v,W^{N}_{1})^{-1}$, where $v$ is identified with its relaxed control. By independence and construction,
\[
	J^{N}_{1}\left([\boldsymbol{u}^{-1},v]\right) = J(\nu;\nu_{\delta}).
\]
Now, thanks to \eqref{EqNashCosts} and the equilibrium property of $\nu_{\ast}$,
\begin{align*}
	& J(\nu;\nu_{\delta}) - J(\nu_{\delta};\nu_{\delta}) \\
	&= J(\nu;\nu_{\delta}) - J(\nu;\nu_{\ast}) + J(\nu_{\ast};\nu_{\ast}) - J(\nu_{\delta};\nu_{\delta}) + J(\nu;\nu_{\ast}) - J(\nu_{\ast};\nu_{\ast}) \\
	&\geq -\epsilon.
\end{align*}
It follows that 
\[
	J^{N}_{1}(\boldsymbol{u}) \leq J^{N}_{1}\left([\boldsymbol{u}^{-1},v]\right) + \epsilon \quad\text{for all }v\in \mathcal{H}_{2}((\mathcal{F}^{N,1}_{t}),\Prb_{N}; \Gamma).
\]
This establishes the local approximate equilibrium property of the strategy vector $\boldsymbol{u}$ with respect to deviations in narrow strategies of player one. By symmetry, the property also holds with respect to deviations of the other players. We conclude that $\boldsymbol{u}$ is a local $\epsilon$-Nash equilibrium.
\end{proof}

%-------

\section{Mean field games} \label{SectLimitSystems}

In order to describe the limit system for the $N$-player games introduced above, consider the stochastic integral equation
\begin{equation} \label{EqLimitDynamics}
\begin{split}
	X(t) &= X(0) + \int_{0}^{t} b\bigl(s,X(s),\mathfrak{m}(s),u(s)\bigr)ds \\
	&\quad + \int_{0}^{t} \sigma\bigl(s,X(s),\mathfrak{m}(s)\bigr)dW(s),\quad t\in [0,T],
\end{split}
\end{equation}
where $\mathfrak{m}\in \mathcal{M}_{2}$ is a flow of probability measures, $W$ a $d_{1}$-dimensional Wiener process defined on some stochastic basis, and
$u$ a $\Gamma$-valued square-integrable adapted process.
%Our assumptions on $b$, $\sigma$ guarantee that existence and uniqueness of solutions hold for Eq.~\eqref{EqLimitDynamics}. Thus, given a stochastic basis $((\Omega,\mathcal{F},\Prb),(\mathcal{F}_{t}))$ rich enough to carry a $d_{1}$-dimensional $(\mathcal{F}_{t})$-Wiener process $W$, a strategy $u\in \mathcal{H}_{2}((\mathcal{F}_{t}),\Prb; \Gamma)$, and a flow of probability measures $\mathfrak{m}\in \mathcal{M}_{2}$ (as well as an additional $\mathcal{F}_{0}$-measurable random variable representing the initial condition if that is not deterministic), there exists a continuous $\mathbb{R}^{d}$-valued $(\mathcal{F}_{t})$-adapted process $X$ such that Eq.~\eqref{EqLimitDynamics} holds $\Prb$-almost surely, and $X$ is unique (up to $\Prb$-indistinguishability) among all continuous $(\mathcal{F}_{t})$-adapted solutions $\tilde{X}$ with $\tilde{X}(0) = X(0)$ $\Prb$-almost surely.

The notion of solution of the mean field game we introduce here makes use of a version of Eq.~\eqref{EqLimitDynamics} involving relaxed controls and varying stochastic bases. Given a flow of measures $\mathfrak{m} \in \mathcal{M}_{2}$, consider the stochastic integral equation
\begin{equation} \label{EqLimitDynamicsRel}
\begin{split}
	X(t) &=  X(0) + \int_{\Gamma\times[0,t]} b\bigl(s,X(s),\mathfrak{m}(s),\gamma\bigr)\rho(d\gamma,ds) \\
	&\quad + \int_{0}^{t} \sigma\bigl(s,X(s),\mathfrak{m}(s))\bigr)dW(s),\quad t\in [0,T].
\end{split}
\end{equation}
A solution of Eq.~\eqref{EqLimitDynamicsRel} with flow of measures $\mathfrak{m}\in \mathcal{M}_{2}$ is a quintuple
$((\Omega,\mathcal{F},\Prb),(\mathcal{F}_{t}),X,\rho,W)$ such that $(\Omega,\mathcal{F},\Prb)$ is a complete probability space, $(\mathcal{F}_{t})$ a filtration in $\mathcal{F}$ satisfying the usual hypotheses, $W$ a $d_{1}$-dimensional $(\mathcal{F}_{t})$-Wiener process, $\rho$ an $\mathcal{R}_{2}$-valued random variable adapted to $(\mathcal{F}_{t})$, and $X$ an $\mathbb{R}^{d}$-valued $(\mathcal{F}_{t})$-adapted continuous process satisfying Eq.~\eqref{EqLimitDynamicsRel} with flow of measures $\mathfrak{m}$ $\Prb$-almost surely. Under our assumptions on $b$ and $\sigma$, existence and uniqueness of solutions hold for Eq.~\eqref{EqLimitDynamicsRel} given any flow of measures $\mathfrak{m} \in \mathcal{M}_{2}$. Moreover, if $((\Omega,\mathcal{F},\Prb),(\mathcal{F}_{t}),X,\rho,W)$ is a solution, then the joint distribution of $(X,\rho,W)$ with respect to $\Prb$ can be identified with a probability measure on $\Borel{\mathcal{Z}}$. Conversely, the set of probability measures $\Theta\in \prbms{\mathcal{Z}}$ that correspond to a solution of Eq.~\eqref{EqLimitDynamicsRel} with respect to some stochastic basis carrying a $d_{1}$-dimensional Wiener process can be characterized through a local martingale problem. To this end, for $f\in \mathbf{C}^{2}(\mathbb{R}^{d}\times \mathbb{R}^{d_{1}})$, $\mathfrak{m}\in \mathcal{M}_{2}$, define the process $M_{f}^{\mathfrak{m}}$ on $(\mathcal{Z},\Borel{\mathcal{Z}})$ by
\begin{equation} \label{ExTestMartingale}
\begin{split}
	M_{f}^{\mathfrak{m}}\bigl(t,(\phi,r,w)\bigr) &\doteq f\bigl(\phi(t),w(t)\bigr) - f\bigl(\phi(0),0\bigr) \\
	&\quad - \int_{\Gamma\times [0,t]} \mathcal{A}^{\mathfrak{m}}_{\gamma,s}(f)\bigl(\phi(s),w(s)\bigr)\, r(d\gamma,ds),\; t\in [0,T],
\end{split}
\end{equation}
where
\begin{equation} \label{ExGenerator}
\begin{split}
	\mathcal{A}^{\mathfrak{m}}_{\gamma,s}(f)(x,y) &\doteq  \sum_{j=1}^{d} b_{j}\bigl(s,x,\mathfrak{m}(s),\gamma\bigr)\frac{\partial f}{\partial x_{j}}(x,y) \\
	&\quad + \frac{1}{2} \sum_{j=1}^{d}\sum_{k=1}^{d} (\sigma\trans{\sigma})_{jk}\bigl(s,x,\mathfrak{m}(s)\bigr) \frac{\partial^{2} f}{\partial x_{j}\partial x_{k}}(x,y)\\
	&\quad + \frac{1}{2} \sum_{l=1}^{d_{1}} \frac{\partial^{2} f}{\partial y_{l}^{2}}(x,y) + \sum_{k=1}^{d}\sum_{l=1}^{d_{1}} \sigma_{kl}\bigl(s,x,\mathfrak{m}(s)\bigr) \frac{\partial^{2} f}{\partial x_{k}\partial y_{l}}(x,y).
\end{split}
\end{equation}
Recall that $(\mathcal{G}_{t})$ denotes the canonical filtration in $\Borel{\mathcal{Z}}$ and $(\hat{X},\hat{\rho},\hat{W})$ the coordinate process on $\mathcal{Z}$. By construction, 
\[
	M_{f}^{\mathfrak{m}}(t) = f\bigl(\hat{X}(t),\hat{W}(t)\bigr) - f\bigl(\hat{X}(0),0\bigr) - \int_{\Gamma\times[0,t]}\! \mathcal{A}^{\mathfrak{m}}_{\gamma,s}(f)\bigl(\hat{X}(s),\hat{W}(s)\bigr)\hat{\rho}(d\gamma,ds),
\]
and $M_{f}^{\mathfrak{m}}$ is $(\mathcal{G}_{t})$-adapted.

\begin{defn} \label{DefSolution}
	A probability measure $\Theta\in \prbms{\mathcal{Z}}$ is called a \emph{solution of Eq.~\eqref{EqLimitDynamicsRel} with flow of measures $\mathfrak{m}$} if the following hold:
\begin{enumerate}[(i)]
	\item $\mathfrak{m}\in \mathcal{M}_{2}$;
	
	\item $\hat{W}(0) = 0$ $\Theta$-almost surely;
	
	\item \label{DefSolutionMart} $M_{f}^{\mathfrak{m}}$ is a local martingale with respect to the filtration $(\mathcal{G}_{t})$ and the probability measure $\Theta$ for every $f$ monomial of first or second order.
\end{enumerate}
\end{defn}

\begin{rem}
The test functions $f$ in \eqref{DefSolutionMart} of Definition~\ref{DefSolution} are the functions $\mathbb{R}^{d}\times \mathbb{R}^{d_{1}} \rightarrow \mathbb{R}$ given by $(x,y)\mapsto x_{j}$, $(x,y)\mapsto y_{l}$, $(x,y)\mapsto x_{j}\cdot x_{k}$, $(x,y)\mapsto y_{l}\cdot y_{\tilde{l}}$, and $(x,y)\mapsto x_{j}\cdot y_{l}$, where $j,k\in \{1,\ldots,d\}$, $l,\tilde{l}\in \{1,\ldots,d_{1}\}$.
\end{rem}

The following lemma justifies the terminology of Definition~\ref{DefSolution}.

\begin{lemma} \label{LemmaMPCharacterization}
Let $\mathfrak{m}\in \mathcal{M}_{2}$. If $((\Omega,\mathcal{F},\Prb),(\mathcal{F}_{t}),X,\rho,W)$ is a solution of Eq.~\eqref{EqLimitDynamicsRel} with flow of measures $\mathfrak{m}$, then $\Theta\doteq \Prb\circ (X,\rho,W)^{-1} \in \prbms{\mathcal{Z}}$ is a solution of Eq.~\eqref{EqLimitDynamicsRel} with flow of measures $\mathfrak{m}$ in the sense of Definition~\ref{DefSolution}.

Conversely, if $\Theta\in \prbms{\mathcal{Z}}$ is a solution of Eq.~\eqref{EqLimitDynamicsRel} with flow of measures $\mathfrak{m}$ in the sense of Definition~\ref{DefSolution}, then the quintuple $((\mathcal{Z},\mathcal{G}^{\Theta},\Theta), (\mathcal{G}^{\Theta}_{t+}), \hat{X}, \hat{\rho}, \hat{W})$ is a solution of Eq.~\eqref{EqLimitDynamicsRel} with flow of measures $\mathfrak{m}$, where $\mathcal{G}^{\Theta}$ is the $\Theta$-completion of $\mathcal{G} \doteq \Borel{\mathcal{Z}}$ and $(\mathcal{G}^{\Theta}_{t+})$ the right-continuous version of the $\Theta$-augmentation of the canonical filtration $(\mathcal{G}_{t})$.
\end{lemma}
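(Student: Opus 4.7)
The plan is to establish the two implications separately. Both rest on the standard identification between relaxed SDE solutions and martingale problems in the Stroock--Varadhan spirit, adapted to keep the driving Wiener process as part of the canonical variables (which is what makes the joint generator \eqref{ExGenerator} carry the mixed $\partial_{x_k}\partial_{y_l}$ term).

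For the first direction, suppose $((\Omega,\mathcal{F},\Prb),(\mathcal{F}_{t}),X,\rho,W)$ is a solution of Eq.~\eqref{EqLimitDynamicsRel} with flow $\mathfrak{m}$. Fix a test monomial $f$ of degree at most two in $(x,y)$ and apply It\^{o}'s formula to the semimartingale $(X(t),W(t))$. Substituting the dynamics of $X$ from Eq.~\eqref{EqLimitDynamicsRel}, the finite-variation component of the expansion collects exactly into $\int_{\Gamma\times[0,t]}\mathcal{A}^{\mathfrak{m}}_{\gamma,s}(f)(X(s),W(s))\,\rho(d\gamma,ds)$ because of the very definition \eqref{ExGenerator}, while the remaining terms are stochastic integrals against $W$ and hence form a continuous local $(\mathcal{F}_{t})$-martingale under $\Prb$. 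Pushing forward along the measurable map $\Omega\ni\omega\mapsto(X,\rho,W)(\omega)\in\mathcal{Z}$, whose induced law is $\Theta$ and which pulls $(\mathcal{G}_{t})$ back into $\sigma(X(s),\rho(s),W(s):s\le t)\subset(\mathcal{F}_{t})$, transfers the local-martingale property to $M_{f}^{\mathfrak{m}}$ on $(\mathcal{Z},(\mathcal{G}_{t}),\Theta)$. The conditions $\mathfrak{m}\in\mathcal{M}_{2}$ and $\hat{W}(0)=0$ $\Theta$-almost surely are immediate.

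For the converse, the idea is to extract from the martingale-problem conditions first the Wiener character of $\hat{W}$ and then the SDE for $\hat{X}$. I would begin with the test functions $f(x,y)=y_{l}$ and $f(x,y)=y_{l}y_{\tilde{l}}$: for these $\mathcal{A}^{\mathfrak{m}}_{\gamma,s}(f)$ is independent of $\gamma$ and equals $0$ and $\delta_{l\tilde{l}}$ respectively, so the associated $M_{f}^{\mathfrak{m}}$ reduce to $\hat{W}_{l}(t)$ and $\hat{W}_{l}(t)\hat{W}_{\tilde{l}}(t)-\delta_{l\tilde{l}}t$. Completing and right-continuating the canonical filtration to $(\mathcal{G}_{t+}^{\Theta})$ preserves these local martingales, and L\'{e}vy's characterization then identifies $\hat{W}$ as a $d_{1}$-dimensional $(\mathcal{G}_{t+}^{\Theta})$-Wiener process under $\Theta$. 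Taking $f(x,y)=x_{j}$ next, for which $\mathcal{A}^{\mathfrak{m}}_{\gamma,s}(f)(x,y)=b_{j}(s,x,\mathfrak{m}(s),\gamma)$, shows that
\[
	N_{j}(t)\doteq \hat{X}_{j}(t)-\hat{X}_{j}(0)-\int_{\Gamma\times[0,t]}b_{j}(s,\hat{X}(s),\mathfrak{m}(s),\gamma)\,\hat{\rho}(d\gamma,ds)
\]
is a continuous local martingale.

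It remains to represent $N$ as $\int_{0}^{\cdot}\sigma(s,\hat{X}(s),\mathfrak{m}(s))\,d\hat{W}(s)$, and this is the delicate step. Feeding in $f(x,y)=x_{j}x_{k}$ and $f(x,y)=x_{j}y_{l}$ produces two additional local martingales; expanding the products $\hat{X}_{j}\hat{X}_{k}$ and $\hat{X}_{j}\hat{W}_{l}$ by It\^{o}'s formula applied to the semimartingales already in hand, and subtracting the known local-martingale contributions, the difference becomes a continuous process of finite variation, hence identically zero, which pins down
\[
	\langle N_{j},N_{k}\rangle_{t}=\int_{0}^{t}(\sigma\trans{\sigma})_{jk}(s,\hat{X}(s),\mathfrak{m}(s))\,ds,\qquad \langle N_{j},\hat{W}_{l}\rangle_{t}=\int_{0}^{t}\sigma_{jl}(s,\hat{X}(s),\mathfrak{m}(s))\,ds.
\]
By bilinearity, the continuous local martingale $\tilde{N}_{j}\doteq N_{j}-\sum_{l}\int_{0}^{\cdot}\sigma_{jl}(s,\hat{X}(s),\mathfrak{m}(s))\,d\hat{W}_{l}(s)$ then has vanishing quadratic variation, forcing $\tilde{N}_{j}\equiv 0$ and thereby giving Eq.~\eqref{EqLimitDynamicsRel} for $\hat{X}$ $\Theta$-almost surely. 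The principal subtlety sits precisely here: since $\sigma$ is allowed to be degenerate, no martingale-representation theorem is available to write $N$ as an integral against $\hat{W}$ directly, so the quadratic-variation matching is essential. The required integrability and adaptedness of the stochastic integrals come from $\hat{\rho}\in\mathcal{R}_{2}$ by construction of $\mathcal{Z}$ together with the sub-linear growth \hypref{HypGrowth} on $\sigma$.
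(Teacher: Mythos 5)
Your proposal is correct and follows essentially the same route as the paper: the forward direction via Itô's formula and pushforward, and the converse by extracting the quadratic covariations from the degree-one and degree-two monomials, invoking Lévy's characterization for $\hat{W}$, and then showing that $\bar X - \int_0^\cdot \sigma\,d\hat W$ is a continuous local martingale of vanishing quadratic variation. The paper writes the quadratic-covariation matching via an auxiliary process $Y=\int_0^\cdot\sigma\,d\hat W$ and the formula for $\langle Y,\bar X\rangle$, whereas you derive the covariations by expanding the degree-two monomials and identifying the finite-variation part, but these are the same computation packaged differently; you also correctly flag the need to pass to $(\mathcal{G}^\Theta_{t+})$ before applying Lévy, and the reason the zero-QV argument is needed in place of martingale representation (degenerate $\sigma$).
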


\begin{proof}
The first part of the assertion is a consequence of It{\^o}'s formula and the local martingale property of the stochastic integral. The local martingale property of $M^{\mathfrak{m}}_{f}$ clearly holds for any $f\in \mathbf{C}^{2}(\mathbb{R}^{d}\times \mathbb{R}^{d_{1}})$.

The proof of the second part is similar to the proof of Proposition~5.4.6 in \citet[pp.\,315-316]{karatzasshreve91}, though here we do not need to extend the probability space; see Appendix~\ref{AppMPCharacterization} below.
\end{proof}

A particular class of solutions of Eq.~\eqref{EqLimitDynamicsRel} in the sense of Definition~\ref{DefSolution} are those where the flow of measures $\mathfrak{m}\in \mathcal{M}_{2}$ is induced by the probability measure $\Theta\in \prbms{\mathcal{Z}}$ in the sense that $\mathfrak{m}(t)$ coincides with the law of $\hat{X}(t)$ under $\Theta$. We call those solutions McKean-Vlasov solutions:

\begin{defn} \label{DefMKVSolution}
A probability measure $\Theta\in \prbms{\mathcal{Z}}$ is called a \emph{McKean-Vlasov solution of Eq.~\eqref{EqLimitDynamicsRel}} if there exists $\mathfrak{m}\in \mathcal{M}_{2}$ such that
\begin{enumerate}[(i)]
	\item $\Theta$ is a solution of Eq.~\eqref{EqLimitDynamicsRel} with flow of measures $\mathfrak{m}$;

	\item $\Theta\circ (\hat{X}(t))^{-1} = \mathfrak{m}(t)$ for every $t\in [0,T]$.
\end{enumerate}
\end{defn}

\begin{rem} \label{RemMcKeanVlasov}
If $\Theta\in \prbms[2]{\mathcal{Z}}$, then the induced flow of measures is in $\mathcal{M}_{2}$. More precisely, let $\Theta\in \prbms[2]{\mathcal{Z}}$ and set $\mathfrak{m}(t)\doteq \Theta\circ (\hat{X}(t))^{-1}$, $t\in [0,T]$. By definition of $\prbms[2]{\mathcal{Z}}$ and the metric $\mathrm{d}_{\mathcal{Z}}$,
\[
	\Mean_{\Theta}\left[ \|\hat{X}\|_{\mathcal{X}}^{2} \right] = \int_{\mathcal{Z}} \|\phi\|_{\mathcal{X}}^{2} \Theta(d\phi,dr,dw) < \infty.
\]
This implies, in particular, that $\mathfrak{m}(t)\in \prbms[2]{\mathbb{R}^{d}}$ for every $t\in [0,T]$. By construction and definition of the square Wasserstein metric, for all $s,t\in [0,T]$,
\[
	\mathrm{d}_{2}\left(\mathfrak{m}(t),\mathfrak{m}(s)\right)^{2} \leq \Mean_{\Theta}\left[ |\hat{X}(t)-\hat{X}(s)|^{2} \right].
\]
Continuity of the trajectories of $\hat{X}$ and the dominated convergence theorem with $2\|\hat{X}\|_{\mathcal{X}}^{2}$ as dominating $\Theta$-integrable random variable imply that $\mathrm{d}_{2}\left(\mathfrak{m}(t),\mathfrak{m}(s)\right) \to 0$ whenever $|t-s|\to 0$. It follows that $\mathfrak{m}\in \mathcal{M}_{2}$.
\end{rem}

Uniqueness holds not only for solutions of Eq.~\eqref{EqLimitDynamicsRel} with fixed flow of measures $\mathfrak{m}\in \mathcal{M}_{2}$, but also for McKean-Vlasov solutions of Eq.~\eqref{EqLimitDynamicsRel}.

\begin{lemma} \label{LemmaMcKeanVlasovUnique}
Let $\Theta, \tilde{\Theta}\in \prbms[2]{\mathcal{Z}}$. If $\Theta$, $\tilde{\Theta}$ are McKean-Vlasov solutions of Eq.~\eqref{EqLimitDynamicsRel} such that $\Theta\circ (\hat{X}(0),\hat{\rho},\hat{W})^{-1} = \tilde{\Theta}\circ (\hat{X}(0),\hat{\rho},\hat{W})^{-1}$, then $\Theta = \tilde{\Theta}$.
\end{lemma}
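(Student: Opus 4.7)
\medskip

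\noindent\textbf{Proof proposal.} The strategy is to couple the two McKean--Vlasov solutions on a single probability space by sharing the same initial condition, relaxed control, and driving Wiener process, and then run a standard Gronwall estimate exploiting the Lipschitz assumption \hypref{HypLipschitz} together with the self-referential coupling property of the square Wasserstein distance. Let $\mathfrak{m}$ and $\tilde{\mathfrak{m}}$ in $\mathcal{M}_{2}$ denote the flows of measures induced by $\Theta$ and $\tilde{\Theta}$, respectively, so that $\mathfrak{m}(t) = \Theta\circ \hat{X}(t)^{-1}$ and $\tilde{\mathfrak{m}}(t) = \tilde{\Theta}\circ \hat{X}(t)^{-1}$ for every $t\in [0,T]$ (both flows lie in $\mathcal{M}_{2}$ by Remark~\ref{RemMcKeanVlasov}).

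First, I would realize $\Theta$ on the canonical space via Lemma~\ref{LemmaMPCharacterization}: work on $(\mathcal{Z},\mathcal{G}^{\Theta},\Theta)$ with filtration $(\mathcal{G}^{\Theta}_{t+})$, so that $(\hat{X},\hat{\rho},\hat{W})$ is a solution of Eq.~\eqref{EqLimitDynamicsRel} with flow $\mathfrak{m}$. On this same stochastic basis, I would then construct a second solution $Y$ of Eq.~\eqref{EqLimitDynamicsRel} driven by the same data $(\hat{X}(0),\hat{\rho},\hat{W})$ but with the frozen flow $\tilde{\mathfrak{m}}$. Since $b(\cdot,\cdot,\tilde{\mathfrak{m}}(\cdot),\cdot)$ and $\sigma(\cdot,\cdot,\tilde{\mathfrak{m}}(\cdot))$ inherit Lipschitz continuity and sub-linear growth in $x$ uniformly in the relaxation variable from \hypref{HypLipschitz} and \hypref{HypGrowth}, the standard Picard iteration for SDEs with relaxed controls produces a unique $(\mathcal{G}^{\Theta}_{t+})$-adapted continuous process $Y$ satisfying Eq.~\eqref{EqLimitDynamicsRel} with flow $\tilde{\mathfrak{m}}$ $\Theta$-almost surely. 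Pathwise uniqueness for this linear-measure-input SDE gives uniqueness in law, so, in view of the hypothesis $\Theta\circ(\hat{X}(0),\hat{\rho},\hat{W})^{-1}=\tilde{\Theta}\circ(\hat{X}(0),\hat{\rho},\hat{W})^{-1}$, we obtain
\[
	\mathrm{Law}_{\Theta}(Y,\hat{\rho},\hat{W}) = \tilde{\Theta}.
\]

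Next, I would subtract the two equations for $\hat{X}$ and $Y$, apply the Cauchy--Schwarz and Burkholder--Davis--Gundy inequalities as in the proof of Lemma~\ref{LemmaGrowthBounds2}, and use \hypref{HypLipschitz} to get a constant $C = C_{T,L}$ with
\[
	\Mean_{\Theta}\!\left[\sup_{s\in[0,t]}|\hat{X}(s)-Y(s)|^{2}\right] \leq C \int_{0}^{t} \Mean_{\Theta}\!\left[|\hat{X}(s)-Y(s)|^{2}\right] ds + C \int_{0}^{t} \mathrm{d}_{2}\bigl(\mathfrak{m}(s),\tilde{\mathfrak{m}}(s)\bigr)^{2} ds
\]
for every $t\in [0,T]$. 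The key self-referential step is the coupling bound: since $\mathfrak{m}(s) = \mathrm{Law}_{\Theta}(\hat{X}(s))$ and $\tilde{\mathfrak{m}}(s) = \mathrm{Law}_{\Theta}(Y(s))$ under the constructed coupling, the definition of the square Wasserstein distance yields
\[
	\mathrm{d}_{2}\bigl(\mathfrak{m}(s),\tilde{\mathfrak{m}}(s)\bigr)^{2} \leq \Mean_{\Theta}\!\left[|\hat{X}(s)-Y(s)|^{2}\right].
\]
Substituting back gives an inequality of the form $\varphi(t) \leq 2C\int_{0}^{t}\varphi(s)\,ds$ with $\varphi$ bounded on $[0,T]$ (boundedness follows from Lemma~\ref{LemmaGrowthBounds2} applied to both flows, since $\Theta,\tilde{\Theta}\in \prbms[2]{\mathcal{Z}}$). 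Gronwall's lemma then forces $\varphi \equiv 0$, hence $\hat{X} = Y$ $\Theta$-almost surely in $\mathcal{X}$. Therefore $\Theta = \mathrm{Law}_{\Theta}(\hat{X},\hat{\rho},\hat{W}) = \mathrm{Law}_{\Theta}(Y,\hat{\rho},\hat{W}) = \tilde{\Theta}$.

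The main obstacle I anticipate is the construction of the coupling: namely, solving Eq.~\eqref{EqLimitDynamicsRel} with flow $\tilde{\mathfrak{m}}$ on the canonical basis supporting $\Theta$ and identifying the law of the resulting triple with $\tilde{\Theta}$. This requires strong existence together with pathwise (hence distributional) uniqueness for the fixed-flow SDE and careful use of the hypothesis that the joint law of the driving data agrees under $\Theta$ and $\tilde{\Theta}$. Once the coupling is in place, everything reduces to the Lipschitz--Gronwall argument, which is routine.
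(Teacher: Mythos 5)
Your proposal is correct and follows the same core strategy as the paper: couple the two McKean--Vlasov solutions on a common basis so they share $(\hat{X}(0),\hat{\rho},\hat{W})$, invoke the tautological bound $\mathrm{d}_{2}(\mathfrak{m}(s),\tilde{\mathfrak{m}}(s))^{2}\leq \Mean[|\hat{X}(s)-Y(s)|^{2}]$, and close with Gronwall. The only real difference is in how the coupling is built: the paper follows Yamada--Watanabe's disintegration construction to produce a joint measure $Q$ on the extended canonical space $\mathcal{X}\times\mathcal{X}\times\mathcal{R}_{2}\times\mathcal{W}$ with marginals $\Theta$ and $\tilde{\Theta}$ and equal initial coordinates, then applies Lemma~\ref{LemmaMPCharacterization} to read both off as solutions under $Q$; you instead work directly on the $\Theta$-canonical space, solve the fixed-flow equation with $\tilde{\mathfrak{m}}$ strongly, and identify the law of the resulting triple with $\tilde{\Theta}$ via uniqueness in law. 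Your variant leans on strong existence for Eq.~\eqref{EqLimitDynamicsRel} with fixed flow (which the paper asserts under \hypref{HypLipschitz}--\hypref{HypGrowth}), whereas the paper's disintegration route only needs the equality of driving-data marginals; both are legitimate, and the remainder of the argument is identical.
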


\begin{proof}
Let $\Theta, \tilde{\Theta}\in \prbms[2]{\mathcal{Z}}$ be McKean-Vlasov solutions of Eq.~\eqref{EqLimitDynamicsRel} such that $\Theta\circ (\hat{X}(0),\hat{\rho},\hat{W})^{-1} = \tilde{\Theta}\circ (\hat{X}(0),\hat{\rho},\hat{W})^{-1}$. Set
\begin{align*}
	& \mathfrak{m}(t)\doteq \Theta\circ \hat{X}(t)^{-1},& &\tilde{\mathfrak{m}}(t)\doteq \tilde{\Theta}\circ \hat{X}(t)^{-1},& &t\in [0,T].&
\end{align*}
In view of Remark~\ref{RemMcKeanVlasov}, we have $\mathfrak{m}, \tilde{\mathfrak{m}}\in \mathcal{M}_{2}$. Define an extended canonical space $\bar{\mathcal{Z}}$ by
\[
	\bar{\mathcal{Z}}\doteq \mathcal{X}\times \mathcal{X}\times \mathcal{R}_{2}\times \mathcal{W}.
\]
Let $(\bar{\mathcal{G}})_{t\geq 0}$ denote the canonical filtration in $\bar{\mathcal{G}}\doteq \Borel{\bar{\mathcal{Z}}}$, and let $(X,\tilde{X},\hat{\rho},\hat{W})$ be the canonical process. A construction analogous to the one used in the proof of Proposition~1 in \citet{yamadawatanabe71} (also see Section~5.3.D in \citet{karatzasshreve91}) yields a measure $Q\in \prbms{\bar{\mathcal{Z}}}$ such that
\begin{align*}
	& Q\circ (X,\hat{\rho},\hat{W})^{-1} = \Theta, & & Q\circ (\tilde{X},\hat{\rho},\hat{W})^{-1} = \tilde{\Theta}, & & Q\left\{ X(0) = \tilde{X}(0) \right\} = 1.&
\end{align*}
By Lemma~\ref{LemmaMPCharacterization}, $((\bar{\mathcal{Z}},\bar{\mathcal{G}}^{Q},Q), (\bar{\mathcal{G}}^{Q}_{t+}), X, \hat{\rho}, \hat{W})$, $((\bar{\mathcal{Z}},\bar{\mathcal{G}}^{Q},Q), (\bar{\mathcal{G}}^{Q}_{t+}), \tilde{X}, \hat{\rho}, \hat{W})$ are solutions of Eq.~\eqref{EqLimitDynamicsRel} with flow of measures $\mathfrak{m}$ and $\tilde{\mathfrak{m}}$, respectively, where $\bar{\mathcal{G}}^{Q}$ is the $Q$-completion of $\bar{\mathcal{G}}$ and $(\bar{\mathcal{G}}^{Q}_{t+})$ the right-continuous version of the $Q$-augmentation of $(\bar{\mathcal{G}}_{t})$.

By construction and definition of the square Wasserstein distance,
\[
	\mathrm{d}_{2}\left(\mathfrak{m}(t), \tilde{\mathfrak{m}}(t)\right)^{2}\leq \Mean_{Q}\left[ \left| X(t) - \tilde{X}(t) \right|^{2} \right] \quad\text{for all }t\in [0,T].
\]
Using \hypref{HypLipschitz}, H{\"o}lder's inequality, It{\^{o}}'s isometry, Fubini's theorem and the fact that $X(0) = \tilde{X}(0)$ $Q$-almost surely, we find that for every $t\in [0,T]$,
\begin{align*}
	&\Mean_{Q}\left[ \left| X(t) - \tilde{X}(t) \right|^{2} \right]  \\
%	\begin{split}
%	&\leq 4\Mean_{Q}\left[ \int_{\Gamma\times [0,t]} \hat{\rho}(d\gamma,ds)\cdot \int_{\Gamma\times [0,t]} \left| b\bigl(s,X(s),\mathfrak{m}(s),\gamma\bigr) - b\bigl(s,\tilde{X}(s),\tilde{\mathfrak{m}}(s),\gamma\bigr) \right|^{2} d\tilde{\rho
%}(d\gamma,ds) \right] \\
%	&\quad + 4\Mean_{Q}\left[ \int_{0}^{t} \left| \sigma\bigl(s,X(s),\mathfrak{m}(s)\bigr) - \sigma\bigl(s,\tilde{X}(s),\tilde{\mathfrak{m}}(s)\bigr) \right|^{2} ds \right]
%	\end{split}\\
	&\leq 4(T+1)L^{2} \int_{0}^{t} \Mean_{Q}\left[ \left| X(s) - \tilde{X}(s) \right|^{2} + \mathrm{d}_{2}\left(\mathfrak{m}(s), \tilde{\mathfrak{m}}(s)\right)^{2} \right] ds \\
	&\leq 8(T+1)L^{2} \int_{0}^{t} \Mean_{Q}\left[ \left| X(s) - \tilde{X}(s) \right|^{2} \right] ds.
\end{align*}
Gronwall's lemma and the continuity of trajectories imply that $X = \tilde{X}$ $Q$-almost surely and that $\mathfrak{m} = \tilde{\mathfrak{m}}$. It follows that $\Theta = \tilde{\Theta}$.
\end{proof}

Define the costs associated with a flow of measures $\mathfrak{m} \in \mathcal{M}_{2}$, an initial distribution $\nu \in \prbms{\mathbb{R}^{d}}$ and a probability measure $\Theta\in \prbms{\mathcal{Z}}$ by
\begin{equation*}
\begin{split}
	&\hat{J}(\nu,\Theta;\mathfrak{m}) \\
	&\doteq \begin{cases}
	\Mean_{\Theta}\left[ \int_{\Gamma\times [0,T]} f\bigl(s,\hat{X}(s),\mathfrak{m}(s),\gamma\bigr) \hat{\rho}(d\gamma,ds) + F\bigl(\hat{X}(T),\mathfrak{m}(T)\bigr) \right] \\
	\qquad\text{if $\Theta$ is a solution of Eq.~\eqref{EqLimitDynamicsRel} with flow of measures $\mathfrak{m}$}\\
	\qquad\text{and }\Theta\circ \hat{X}(0)^{-1} = \nu, \\
	\infty\quad \text{otherwise.}
	\end{cases}
\end{split}
\end{equation*}
This defines a measurable mapping $\hat{J}\!: \prbms{\mathbb{R}^{d}}\times \prbms{\mathcal{Z}}\times \mathcal{M}_{2}\rightarrow [0,\infty]$. The corresponding value function $\hat{V}\!: \prbms{\mathbb{R}^{d}}\times \mathcal{M}_{2} \rightarrow [0,\infty]$ is given by
\[
	\hat{V}(\nu;\mathfrak{m})\doteq \inf_{\Theta\in \prbms{\mathcal{Z}}} \hat{J}(\nu,\Theta;\mathfrak{m}).
\]

\begin{defn} \label{DefMFGSol}
	A pair $(\Theta,\mathfrak{m})$ is called a \emph{solution of the mean field game} if the following hold:
\begin{enumerate}[(i)]
	\item $\mathfrak{m}\in \mathcal{M}_{2}$, $\Theta\in \prbms{\mathcal{Z}}$, and $\Theta$ is a solution of Eq.~\eqref{EqLimitDynamicsRel} with flow of measures $\mathfrak{m}$;
			
	\item Mean field condition: $\Theta\circ \hat{X}(t)^{-1} = \mathfrak{m}(t)$ for every $t\in [0,T]$;
	
	\item Optimality condition: $\hat{J}(\mathfrak{m}(0),\Theta;\mathfrak{m}) \leq \hat{J}(\mathfrak{m}(0),\tilde{\Theta};\mathfrak{m})$ for every $\tilde{\Theta}\in \prbms{\mathcal{Z}}$.
\end{enumerate}
\end{defn}

%If $(\Theta,\mathfrak{m})$ is a solution of the mean field game, then $\mathfrak{m}(0) = \Theta\circ \hat{X}(0)^{-1}$ is called the \emph{initial distribution} of the solution. If $\mathfrak{m}(0) = \delta_{x}$ for some $x\in \mathbb{R}^{d}$, then $x$ is called the \emph{initial state} of the solution.

In Definition~\ref{DefMFGSol}, there is some redundancy in the choice of the pair $(\Theta,\mathfrak{m})$ as solution of the mean field game in that, thanks to the mean field condition, the flow of measures $\mathfrak{m}$ is completely determined by the probability measure $\Theta$. Consequently, we may call a probability measure $\Theta\in \prbms{\mathcal{Z}}$ a \emph{solution of the mean field game} if the pair $(\Theta,\mathfrak{m})$ is a solution of the mean field game in the sense of Definition~\ref{DefMFGSol} where $\mathfrak{m}$ is the flow of measures induced by $\Theta$, that is, $\mathfrak{m}(t)\doteq \Theta\circ \hat{X}(t)^{-1}$, $t\in [0,T]$.

If $\Theta$ is a solution of the mean field game, then, again thanks to the mean field condition, it is also a McKean-Vlasov solution of Eq.~\eqref{EqLimitDynamicsRel}. In general, however, $\Theta$ is not optimal as a controlled McKean-Vlasov solution. In the optimality condition of Definition~\ref{DefMFGSol}, in fact, the flow of measures is frozen at the flow of measures induced by $\Theta$, while in an optimization problem of McKean-Vlasov type the flow of measures would have to vary with the controlled solution.

\begin{rem}
The use of relaxed controls in Definition~\ref{DefMFGSol} has a twofold motivation. The first is pragmatic and well known \citep[for instance,][]{elkarouietalii87, kushner90}, namely the fact that relaxed controls allow one to embed the space of control processes into a nice topological space (if $\Gamma$ is compact, then $\mathcal{R} = \mathcal{R}_{2}$ is compact; for unbounded $\Gamma$, $\mathcal{R}_{2}$ is still Polish) without changing the minimal costs. In particular, existence of optimal controls is guaranteed in the space of relaxed controls. The second motivation is related to this fact, but more conceptual. The mean field condition in the mean field game is required to hold for the law of the state process under an optimal control only. Thus, existence of optimal controls (for a given flow of measures) is crucial for the existence of solutions to the mean field game. For ordinary optimal control problems, on the other hand, it suffices that the minimal costs be well defined. Still, it is natural to ask for conditions ensuring that a solution of the mean field game can be obtained in ordinary control processes, not just in relaxed controls. Sufficient conditions of this kind have been established in \citet{lacker15}. One simple sufficient condition is that the dynamics be linear and the costs convex in the control.
\end{rem}

The next lemma, the proof of which is based on time discretization and dynamic programming, will be an essential ingredient in the construction of competitor strategies in the proof of Theorem~\ref{ThConnection} below.

\begin{lemma} \label{LemmaNoiseFeedback}
Let $\mathfrak{m}\in \mathcal{M}_{2}$. Given any $\epsilon > 0$, there exists a measurable function $\psi^{\mathfrak{m}}_{\epsilon}: [0,T]\times \mathbb{R}^{d}\times \mathcal{W} \rightarrow \Gamma$ such that the following hold:
\begin{enumerate}[(i)]
	\item $\psi^{\mathfrak{m}}_{\epsilon}$ is progressively measurable in the sense that, for every $t\in [0,T]$, every $x\in \mathbb{R}^{d}$, we have $\psi^{\mathfrak{m}}_{\epsilon}(t,x,w) = \psi^{\mathfrak{m}}_{\epsilon}(t,x,\tilde{w})$ whenever $w(s) = \tilde{w}(s)$ for all $s\in [0,t]$;
	
	\item $\psi^{\mathfrak{m}}_{\epsilon}$ takes values in a finite subset of $\Gamma$;

	\item $\hat{J}(\mathfrak{m}(0),\Theta^{\mathfrak{m}}_{\epsilon};\mathfrak{m}) \leq \hat{V}(\mathfrak{m}(0);\mathfrak{m}) + \epsilon$, where $\Theta^{\mathfrak{m}}_{\epsilon}$ is the unique probability measure in $\prbms[2]{\mathcal{Z}}$ such that $\Theta^{\mathfrak{m}}_{\epsilon}$ is a solution of Eq.~\eqref{EqLimitDynamicsRel} with flow of measures $\mathfrak{m}$, $\Theta^{\mathfrak{m}}_{\epsilon}\circ (\hat{X}(0))^{-1} = \mathfrak{m}(0)$, and
\[
	\hat{\rho}(d\gamma,dt) = \delta_{\psi^{\mathfrak{m}}_{\epsilon}\left(t,\hat{X}(0),\hat{W}\right)}(d\gamma)\,dt\quad \Theta^{\mathfrak{m}}_{\epsilon}\text{-almost surely.}
\]
\end{enumerate}
\end{lemma}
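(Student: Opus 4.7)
The plan is to build $\psi^{\mathfrak{m}}_{\epsilon}$ by successive approximations: first select a near-optimal relaxed control for the frozen-$\mathfrak{m}$ problem; then pass to an ordinary strict control via chattering; then reduce to a noise-feedback functional using that for deterministic $\mathfrak{m}$ the control problem is Markovian in $(X,W)$; and finally discretize in time, in control values, and in the sampled noise increments by a backward dynamic programming step. To begin, choose $\Theta_{0} \in \prbms[2]{\mathcal{Z}}$ with $\Theta_{0}\circ \hat{X}(0)^{-1} = \mathfrak{m}(0)$ and $\hat{J}(\mathfrak{m}(0),\Theta_{0};\mathfrak{m}) \leq \hat{V}(\mathfrak{m}(0);\mathfrak{m}) + \epsilon/3$; finiteness of $\hat{V}(\mathfrak{m}(0);\mathfrak{m})$ follows by evaluating any constant control with value in $\Gamma_{0}$ from \hypref{HypCostCoercivity} and using Lemma~\ref{LemmaGrowthBounds2} together with the sub-quadratic growth \hypref{HypCostGrowth}.

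Next, invoke a chattering approximation (Kushner, Theorem~3.5.2, as in the proof of Proposition~\ref{PropNashExistence}) to replace $\Theta_{0}$ by a measure $\Theta_{1}$ arising from an ordinary $\Gamma$-valued control $v$ on a suitable stochastic basis, at the cost of at most a further $\epsilon/3$. Because the frozen-$\mathfrak{m}$ dynamics of Eq.~\eqref{EqLimitDynamicsRel} are Markovian in $(X,W)$ and the cost integrand involves no auxiliary randomness beyond $(X(0),W)$, a purification/conditioning argument --- using strong uniqueness of Eq.~\eqref{EqLimitDynamicsRel} for a fixed measurable control together with a regular conditional distribution of $v$ given $(X(0),W)$ --- produces a Borel progressively measurable functional $\bar{\psi}\colon [0,T]\times\mathbb{R}^{d}\times\mathcal{W}\to\Gamma$ such that the control $\bar{\psi}(t,X(0),W)$ yields the same cost as $v$. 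The coercivity \hypref{HypCostCoercivity} combined with finiteness of this cost gives an $L^{2}$ bound on $v$, so $\bar{\psi}$ may be taken to have its range in a fixed compact $K \subset \Gamma$.

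It remains to approximate $\bar{\psi}$ by a finite-valued noise-feedback functional. Fix a partition $0=t_{0}<t_{1}<\cdots<t_{n}=T$ of mesh $\delta$, a finite $\eta$-net $\Gamma_{m}\subset K$, and finite Borel partitions of $\mathbb{R}^{d}$ and $\mathbb{R}^{d_{1}}$ whose cells have small diameter on bounded regions. On $[t_{k-1},t_{k})$ set
\[
\psi^{\mathfrak{m}}_{\epsilon}(t,x_{0},w) := a_{k}\bigl(\pi(x_{0}),\pi(w(t_{1})),\ldots,\pi(w(t_{k-1}))\bigr),
\]
where $\pi$ denotes the cell-projection and the $\Gamma_{m}$-valued functions $a_{k}$ are determined backwards from $k=n$ down to $k=1$ by selecting, cell by cell, the value in $\Gamma_{m}$ that minimizes the conditional expected cost-to-go from time $t_{k-1}$ under the law of Eq.~\eqref{EqLimitDynamicsRel} when the future pieces of the noise-feedback control have already been fixed as $a_{k+1},\ldots,a_{n}$. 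This produces $\psi^{\mathfrak{m}}_{\epsilon}$ that is Borel, progressively measurable, and finite-valued; the measure $\Theta^{\mathfrak{m}}_{\epsilon}\in\prbms[2]{\mathcal{Z}}$ of part~(iii) is then the law of the unique solution of Eq.~\eqref{EqLimitDynamicsRel} under this control, via the well-posedness of the equation and Lemma~\ref{LemmaMPCharacterization}.

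To conclude $\hat{J}(\mathfrak{m}(0),\Theta^{\mathfrak{m}}_{\epsilon};\mathfrak{m}) \leq \hat{V}(\mathfrak{m}(0);\mathfrak{m}) + \epsilon$, one combines the Lipschitz estimates \hypref{HypLipschitz} and \hypref{HypCostLip} with the continuity \hypref{HypMeasCont}, the moment bounds of Lemmas~\ref{LemmaGrowthBounds} and \ref{LemmaGrowthBounds2} (used to truncate to bounded state sets up to arbitrarily small error), and the Bellman-type optimality intrinsic to the backward recursion, which together ensure that the DP control is within $\epsilon/3$ of $\Theta_{1}$ once $\delta,\eta$ are small and the cells of $\pi$ fine enough on sufficiently large balls. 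The main difficulty is the second step: reducing an arbitrary adapted strict control to a functional of $(t,X(0),W|_{[0,t]})$. Once this purification is in place, the discretization in the third step is a standard backward induction over a finite product of finite sets, so measurability, finite-valuedness and progressive measurability come for free.
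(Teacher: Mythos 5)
There is a genuine gap in the second step (what you call purification). Taking a regular conditional distribution of an arbitrary admissible strict control $v$ given $(X(0),W)$ does not produce a noise-feedback control that ``yields the same cost as $v$.'' The obstruction is twofold. First, under a general $\mathcal{F}_{t}$-adapted $v$ the state process $X^{v}$ may itself depend on auxiliary randomness beyond $\sigma(X(0),W|_{[0,t]})$; conditioning $v(t)$ on $\sigma(X(0),W|_{[0,t]})$ therefore does not produce the conditional law of $v(t)$ given the state, and the drift generated by the conditioned control, when plugged into Eq.~\eqref{EqLimitDynamicsRel}, does not mimic the drift of $X^{v}$. What one typically gets from such a projection is a control whose state process has \emph{different} finite-dimensional laws, and matching even the marginals requires a Markovian-projection argument conditioning on the \emph{current state}, not on the noise path. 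Second, even granting a mimicking result, the RCD is a kernel (a relaxed control); passing to a strict $\Gamma$-valued selector is not innocuous since $\Gamma$ is not assumed convex and the cost need not be preserved under a measurable selection. So the claim ``produces $\bar{\psi}$ with the same cost'' is exactly the hard point and cannot be asserted.

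The paper's proof avoids this entirely by approaching from the value-function side rather than from an arbitrary near-optimal control. It introduces the frozen-$\mathfrak{m}$ value function $V_{\mathfrak{m}}(0,\cdot)$ over relaxed controls, proves a local Lipschitz bound uniform over a class of near-optimal controls, then restricts successively to bounded, finite, piecewise-constant control classes and shows (via the Lipschitz bound and a Dini-type argument) that the restricted value functions converge to $V_{\mathfrak{m}}(0,\cdot)$ uniformly on compacts. At the finest level the problem is a genuine discrete-time stochastic control problem with a universal system function $\Phi_{\mathfrak{m},M,k}$ (Kallenberg); Bertsekas--Shreve dynamic programming supplies an optimal Markov feedback strategy $v_{\ast}$, which is then \emph{unrolled} into a noise-feedback functional $\psi^{\mathfrak{m}}_{M,k}$ by iterating the system function along the noise path. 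Note also that the paper never discretizes the noise values: $\psi^{\mathfrak{m}}_{M,k}$ depends on the full continuous noise path through $\Phi_{\mathfrak{m},M,k}$, whereas your step~4 proposes quantizing $w(t_{j})$ in finite cells, which would require additional quantitative estimates you have not supplied. The discretization and backward induction of your step~4 are in the right spirit, but without the value-function hierarchy they cannot deliver the optimality bound, because you would not know that the discrete-time DP optimum is within $\epsilon$ of the original $\hat{V}$.
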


\begin{proof}
Fix $\mathfrak{m}\in \mathcal{M}_{2}$, and set, for $(t,x,\gamma) \in [0,T]\times \mathbb{R}^{d}\times \Gamma$,
\begin{align*}
	& b_{\mathfrak{m}}(t,x,\gamma)\doteq b\bigl(t,x,\mathfrak{m}(t),\gamma\bigr),& & \sigma_{\mathfrak{m}}(t,x)\doteq \sigma\bigl(t,x,\mathfrak{m}(t)\bigr),& \\
	& f_{\mathfrak{m}}(t,x,\gamma)\doteq f\bigl(t,x,\mathfrak{m}(t),\gamma\bigr),& & F_{\mathfrak{m}}(x)\doteq F\bigl(x,\mathfrak{m}(T)\bigr).&
\end{align*}
Thanks to assumptions \hypref{HypMeasCont}, \hypref{HypLipschitz}, \hypref{HypCostLip}, and the continuity of $\mathfrak{m}$, we have that $b_{\mathfrak{m}}$, $\sigma_{\mathfrak{m}}$, $f_{\mathfrak{m}}$ are continuous in the time and control variable, uniformly over compact subsets of $\mathbb{R}^{d}$, $b_{\mathfrak{m}}$, $\sigma_{\mathfrak{m}}$ are globally Lipschitz continuous in the state variable, uniformly in the other variables, and $f_{\mathfrak{m}}$, $F_{\mathfrak{m}}$ are locally Lipschitz continuous in the state variable, uniformly in the other variables, with local Lipschitz constants that grow sublinearly in the state variable. 

The function $\psi^{\mathfrak{m}}_{\epsilon}$ will be constructed based on the principle of dynamic programming applied in discrete time. To this end, we first introduce an original control problem corresponding to the minimal costs $\hat{V}(\cdot,\mathfrak{m})$, then we build a sequence of approximating optimal control problems by successively restricting the set of admissible strategies. The proof proceeds in six steps.

\textbf{First step}. Let $\mathcal{U}$ be the set of all quadruples $((\Omega,\mathcal{F},\Prb),(\mathcal{F}_{t}),\rho,W)$ such that the pair $((\Omega,\mathcal{F},\Prb),(\mathcal{F}_{t}))$ forms a stochastic basis satisfying the usual hypotheses, $W$ is a $d_{1}$-dimensional $(\mathcal{F}_{t})$-Wiener process, and $\rho$ is an $(\mathcal{F}_{t})$-adapted $\mathcal{R}_{2}$-valued random variable such that $\Mean\left[ \int_{\Gamma\times[0,T]} |\gamma|^{2} \rho(d\gamma,ds) \right] < \infty$. For simplicity, we may write $\rho\in \mathcal{U}$ instead of $((\Omega,\mathcal{F},\Prb),(\mathcal{F}_{t}),\rho,W)\in \mathcal{U}$. Given any $\rho\in \mathcal{U}$, $(t_{0},x)\in [0,T]\times \mathbb{R}^{d}$, the stochastic integral equation
\begin{equation} \label{EqLimitDynamicsRelm}
\begin{split}
	X(t) &=  x + \int_{\Gamma\times[0,t]} b_{\mathfrak{m}}\bigl(t_{0}+s,X(s),\gamma\bigr)\rho(d\gamma,ds) \\
	&\quad + \int_{0}^{t} \sigma_{\mathfrak{m}}\bigl(t_{0}+s,X(s)\bigr)dW(s),\quad t\in [0,T-t_{0}],
\end{split}
\end{equation}
has a unique solution $X = X^{t_{0},x,\rho}$, that is, $X$ is the unique (up to indistinguishability with respect to $\Prb$) $\mathbb{R}^{d}$-valued $(\mathcal{F}_{t})$-adapted continuous process that satisfies \eqref{EqLimitDynamicsRelm} with $\Prb$-probability one. Although the solution $X$ of Eq.~\eqref{EqLimitDynamicsRelm} starts in $x$ at time zero, it corresponds to the solution of Eq.~\eqref{EqLimitDynamicsRel} starting in $x$ at time $t_{0}$. Define the costs associated with strategy $\rho$ and initial condition $(t_{0},x)\in [0,T]\times \mathbb{R}^{d}$ by
\[
	J_{\mathfrak{m}}(t_{0},x,\rho)\doteq \Mean\left[ \int_{\Gamma\times [0,T-t_{0}]}\! f_{\mathfrak{m}}\bigl(t_{0}+s,X(s),\gamma\bigr) \rho(d\gamma,ds) + F_{\mathfrak{m}}\bigl(X(T-t_{0})\bigr) \right],
\]
where $X = X^{t_{0},x,\rho}$. The corresponding value function $V_{\mathfrak{m}}$ is given by
\[
	V_{\mathfrak{m}}(t,x)\doteq \inf_{\rho\in \mathcal{U}} J_{\mathfrak{m}}(t,x,\rho),
\]
which is well-defined as a measurable function $[0,T]\times \mathbb{R}^{d} \rightarrow [0,\infty)$. Actually, $V_{\mathfrak{m}}$ is continuous. For $x\in \mathbb{R}^{d}$, $\rho\in \mathcal{U}$, set
\[
	\Theta^{x,\rho}\doteq \Prb\circ (X^{0,x,\rho},\rho,W)^{-1}.
\]
Then $\Theta^{x,\rho}$ is a solution of Eq.~\eqref{EqLimitDynamicsRel} with flow of measures $\mathfrak{m}$ and
\[
	J_{\mathfrak{m}}(0,x,\rho) = \hat{J}(\delta_{x},\Theta^{x,\rho};\mathfrak{m}).
\]
Conversely, in view of Lemma~\ref{LemmaMPCharacterization} and thanks to Assumption~\hypref{HypCostCoercivity}, any $\Theta\in \prbms{\mathcal{Z}}$ with $\hat{J}(\delta_{x},\Theta;\mathfrak{m}) < \infty$ induces a strategy $\rho\in \mathcal{U}$ such that $\Theta^{x,\rho} = \Theta$. It follows that $V_{\mathfrak{m}}(0,x) = \hat{V}(\delta_{x};\mathfrak{m})$ for every $x\in \mathbb{R}^{d}$ and, by conditioning on the initial state at time zero, 
\[
	\int_{\mathbb{R}^{d}} V_{\mathfrak{m}}(0,x)\, \mathfrak{m}(0)(dx) = \hat{V}(\mathfrak{m}(0);\mathfrak{m}).
\]

\textbf{Second step}. The function $V_{\mathfrak{m}}(0,\cdot)$ is locally Lipschitz continuous. To be more precise, choose $c_{0} > 0$, $\Gamma_{0} \subset \Gamma$ according to \hypref{HypCostCoercivity}, and let $r_{0} > 0$ be such that $\Gamma_{0} \subset \left\{ \gamma\in \mathbb{R}^{d_{2}} : |\gamma| \leq r_{0} \right\}$. We are going to show that there exists a constant $C_{1}\in (0,\infty)$ depending only on $K$, $L$, $T$, $\mathfrak{m}$, $r_{0}$, and $c_{0}$ such that
\begin{equation} \label{EqValueFnctLocalLip}
	\left| V_{\mathfrak{m}}(0,x) - V_{\mathfrak{m}}(0,\tilde{x})\right| \leq C_{1}  \left(1 + R\right) \left|x - \tilde{x}\right| \text{ whenever } |x|\vee|\tilde{x}| \leq R.
\end{equation}

To establish \eqref{EqValueFnctLocalLip}, set, for $\epsilon > 0$, $R > 0$,
\[
	\mathcal{U}_{\epsilon,R}\doteq \left\{ \rho\in \mathcal{U} : J_{\mathfrak{m}}(0,x;\rho) \leq V_{\mathfrak{m}}(0,x) + \epsilon \text{ for some }x \text{ with } |x|\leq R \right\}.
\]
Then for all $x, \tilde{x}\in \mathbb{R}^{d}$ with $|x|\vee |\tilde{x}| \leq R$,
\[
	\left| V_{\mathfrak{m}}(0,x) - V_{\mathfrak{m}}(0,\tilde{x})\right| \leq \inf_{\epsilon > 0} \sup_{\rho\in \mathcal{U}_{\epsilon, R}} \left| J_{\mathfrak{m}}(0,x;\rho) - J_{\mathfrak{m}}(0,\tilde{x}; \rho)\right|.
\]

Let $x, \tilde{x}\in \mathbb{R}^{d}$, $\rho \in \mathcal{U}$, and let $X$, $\tilde{X}$ be the solutions of \eqref{EqLimitDynamicsRelm} under $\rho$ with initial state $x$ and $\tilde{x}$, respectively. Using H{\"o}lder's inequality, Jensen's inequality, It{\^o}'s isometry, Fubini's theorem, assumption \hypref{HypLipschitz}, and Gronwall's lemma, we find that there exists a constant $C_{L,T}$ depending only on $L$, $T$ such that
\[
	\sup_{t\in [0,T]} \Mean\left[ \left| X(t) - \tilde{X}(t) \right|^{2} \right] \leq C_{L,T} \left|x - \tilde{x}\right|.
\]
Reusing the same tools but with assumption \hypref{HypGrowth} in place of \hypref{HypLipschitz} (also cf.\ Lemma~\ref{LemmaGrowthBounds}), we find that that there exists a constant $C_{K,T,\mathfrak{m}}$ depending only on $K$, $T$, and on $\mathfrak{m}$ (through $\sup_{t\in [0,T]} \int |y|^{2} \mathfrak{m}(t)(dy)$, which is finite since $\mathfrak{m}$ is continuous in time) such that
\[
	\sup_{t\in [0,T]} \Mean\left[ \left| X(t)\right|^{2} \right] \leq C_{K,T,\mathfrak{m}} \left(1 + |x|^{2} + \Mean\left[ \int_{\Gamma\times [0,T]} |\gamma|^{2} \rho(d\gamma,dt) \right] \right).
\]
Thanks to the above estimates and assumption \hypref{HypCostLip}, we have that there exist a constant $C_{L,T,\mathfrak{m}}$ depending only on $L$, $T$, and $\mathfrak{m}$, and a constant $C_{K,L,T,\mathfrak{m}}$ depending only on $K$, $L$, $T$, and $\mathfrak{m}$ such that
\begin{align*}
	&\left| J_{\mathfrak{m}}(0,x;\rho) - J_{\mathfrak{m}}(0,\tilde{x}; \rho)\right| \\
	&\leq C_{L,T,\mathfrak{m}} \left( 1 + \sup_{t\in [0,T]} \sqrt{\Mean\left[\left|X(t)\right|^{2} \right]} + \sup_{t\in [0,T]} \sqrt{\Mean\left[\left|\tilde{X}(t)\right|^{2} \right]}\right)\cdot \left|x - \tilde{x}\right| \\
	&\leq C_{K,L,T,\mathfrak{m}} \left( 1 + |x|\vee |\tilde{x}| + \sqrt{\Mean\left[ \int_{\Gamma\times [0,T]} |\gamma|^{2} \rho(d\gamma,dt) \right]} \right)\cdot \left|x - \tilde{x}\right|.
\end{align*}
It follows that for all $x, \tilde{x}\in \mathbb{R}^{d}$ with $|x|\vee |\tilde{x}| \leq R$,
\[
\begin{split}
	&\left| V_{\mathfrak{m}}(0,x) - V_{\mathfrak{m}}(0,\tilde{x})\right| \\
	&\leq C_{K,L,T,\mathfrak{m}}\cdot \inf_{\epsilon > 0} \left(1 + R + \sup_{\rho\in \mathcal{U}_{\epsilon, R}} \sqrt{\Mean\left[ \int_{\Gamma\times [0,T]} |\gamma|^{2} \rho(d\gamma,dt) \right]} \right) \cdot |x - \tilde{x}|.
\end{split}
\]

By the same estimates as above, but using \hypref{HypCostGrowth} instead of \hypref{HypCostLip}, we find that there exists a constant $\tilde{C}_{K,T,\mathfrak{m}}$ depending only on $K$, $T$, $\mathfrak{m}$ such that, for all $x\in \mathbb{R}^{d}$, all $\rho\in \mathcal{U}$,
\[
	J_{\mathfrak{m}}(0,x;\rho) \leq \tilde{C}_{K,T,\mathfrak{m}} \left( 1 + |x|^{2} + \Mean\left[ \int_{\Gamma\times [0,T]} |\gamma|^{2} \rho(d\gamma,dt) \right] \right).
\]
This implies that there exists a constant $C_{K,T,\mathfrak{m},\Gamma}$ depending only on $K$, $T$, $\mathfrak{m}$, and on $\Gamma$ (through $\min_{\gamma\in \Gamma} |\gamma|^{2}$) such that, for all $x\in \mathbb{R}^{d}$,
\[
	V_{\mathfrak{m}}(0,x) \leq C_{K,T,\mathfrak{m},\Gamma} \left( 1 + |x|^{2} \right).
\]
Let $\rho\in \mathcal{U}_{\epsilon,R}$ for some $\epsilon > 0$. Choose $x\in \mathbb{R}^{d}$ with $|x|\leq R$ such that $J_{\mathfrak{m}}(0,x;\rho) \leq V_{\mathfrak{m}}(0,x) + \epsilon$ (possible by definition of $\mathcal{U}_{\epsilon,R}$). By the coercivity assumption \hypref{HypCostCoercivity},
\begin{align*}
	J_{\mathfrak{m}}(0,x;\rho) &\geq c_{0} \Mean\left[ \int_{(\Gamma\setminus \Gamma_{0})\times [0,T]} |\gamma|^{2} \rho(d\gamma,dt) \right], \\
\intertext{hence}
	c_{0} \Mean\left[ \int_{(\Gamma\setminus \Gamma_{0})\times [0,T]} |\gamma|^{2} \rho(d\gamma,dt) \right] &\leq C_{K,T,\mathfrak{m},\Gamma} \left( 1 + R^{2} \right) + \epsilon.
\end{align*}
By construction,
\[
	\Mean\left[ \int_{\Gamma\times [0,T]} |\gamma|^{2} \rho(d\gamma,dt) \right] \leq T\cdot r_{0}^{2} + \Mean\left[ \int_{(\Gamma\setminus \Gamma_{0})\times [0,T]} |\gamma|^{2} \rho(d\gamma,dt) \right].
\]
It follows that there exists a constant $C_{K,T,\mathfrak{m},c_{0},r_{0}}$ depending only on $K$, $T$, $\mathfrak{m}$, $c_{0}$, and on $r_{0}$ (clearly, $\min_{\gamma\in \Gamma} |\gamma|^{2}\leq r_{0}^{2}$) such that
\[
	\sup_{\rho\in \mathcal{U}_{\epsilon,R}} \sqrt{\Mean\left[ \int_{\Gamma\times [0,T]} |\gamma|^{2} \rho(d\gamma,dt) \right]} \leq C_{K,T,\mathfrak{m},c_{0},r_{0}} \left(1 + R + \sqrt{\epsilon}\right).
\]
This establishes \eqref{EqValueFnctLocalLip}.

\textbf{Third Step}. For $M\in \mathbb{N}$, set $\Gamma_{M}\doteq \left\{\gamma \in \Gamma: |\gamma|\leq M\right\}$. For $M$ big enough, say $M \geq M_{0}$, $\Gamma_{M}$ is non-empty. Choose $\gamma_{0}\in \Gamma_{M_{0}}$, and set $\Gamma_{M}\doteq \{\gamma_{0}\}$ if $M < M_{0}$. Then, for every $M\in \mathbb{N}$, $\Gamma_{M}$ is compact (and non-empty) and $\Gamma_{M} \subset \Gamma_{M+1}$. Set
\[
	\mathcal{U}_{M}\doteq \left\{\rho \in \mathcal{U}: \rho(\Gamma_{M}\times [0,T]) = T\; \Prb\text{-almost surely}\right\},
\]
and let $V_{\mathfrak{m},M}$ be the value function defined with respect to $\mathcal{U}_{M}$ instead of $\mathcal{U}$. We claim that
\begin{equation} \label{EqValueFnctCompactConv}
	V_{\mathfrak{m},M}(0,\cdot) \stackrel{M\to\infty}{\searrow} V_{\mathfrak{m}}(0,\cdot)\text{ uniformly over compact subsets of }\mathbb{R}^{d}.
\end{equation}
Notice that, by construction, $V_{\mathfrak{m},M}(0,\cdot) \geq V_{\mathfrak{m},M+1}(0,\cdot) \geq V_{\mathfrak{m}}(0,\cdot)$ for every $M\in \mathbb{N}$. By Step~2, we know that $V_{\mathfrak{m}}(0,\cdot)$ is locally Lipschitz. Repeating the arguments of Step~2 (notice that $\mathcal{U}_{M} \subset \mathcal{U}$ by definition), we find that inequality \eqref{EqValueFnctLocalLip} also holds for $V_{\mathfrak{m},M}(0,\cdot)$ in place of $V_{\mathfrak{m}}(0,\cdot)$ and that the constant $C_{1}$ can be chosen independently of $M\in \mathbb{N}$. To establish \eqref{EqValueFnctCompactConv}, it is therefore enough to check that point-wise convergence holds. Fix $x\in \mathbb{R}^{d}$. It suffices to show that given $\rho\in \mathcal{U}$ there exits a sequence $(\rho^{(M)})\subset \mathcal{U}$ such that $\rho^{(M)}\in \mathcal{U}_{M}$ for every $M$ and $J_{\mathfrak{m}}(0,x;\rho^{(M)}) \to J_{\mathfrak{m}}(0,x;\rho)$ as $M\to \infty$.

Let $\rho\in \mathcal{U}$. For $M\in \mathbb{N}$, let $\rho^{(M)}\in \mathcal{U}_{M}$ be such that for every $B\in \Borel{\Gamma}$, every $I\in \Borel{[0,T]}$,
\[
	\rho^{(M)}(B\times I) = \rho((B\cap \Gamma_{M})\times I) + \rho((\Gamma\setminus \Gamma_{M}) \times I)\cdot \delta_{\gamma_{0}}(B).
\]
This determines a unique strategy $\rho^{(M)}\in \mathcal{U}_{M}$. Clearly, $\rho^{(M)}$ comes with the same stochastic basis as $\rho$. If $(\dot{\rho}_{t})$ is a version of the time derivative process associated with $\rho$ (thus, $\rho(d\gamma,dt) = \dot{\rho}_{t}(d\gamma)dt$), then a version of the time derivative process of $\rho^{(M)}$ is given by
\[
	\dot{\rho}^{(M)}_{t}(d\gamma) = \mathbf{1}_{\Gamma_{M}}(\gamma)\cdot \rho_{t}(d\gamma) + \rho_{t}(\Gamma\setminus \Gamma_{M})\cdot \delta_{\gamma_{0}}(d\gamma).
\]
Let $X$, $X^{(M)}$ be the solutions of \eqref{EqLimitDynamicsRelm} under $\rho$ and $\rho^{(M)}$, respectively. Thanks to H{\"o}lder's inequality, Jensen's inequality, It{\^o}'s isometry, Fubini's theorem, and assumption \hypref{HypLipschitz}, there exists a constant $C_{L,T}$ depending only on $L$, $T$ such that, for every $t\in [0,T]$,
\begin{multline*}
	\Mean\left[ \left| X(t) - X^{(M)}(t) \right|^{2} \right] \leq C_{L,T} \int_{0}^{t} \Mean\left[ \left| X(s) - X^{(M)}(s) \right|^{2} \right]ds \\
	+ C_{L,T} \Mean\left[ \left| \int_{\Gamma\times [0,t]} b_{\mathfrak{m}}\bigl(s,X(s),\gamma\bigr)\left(\rho^{(M)} - \rho \right)(d\gamma,ds) \right|^{2} \right].
\end{multline*}
Using the definition of $\rho^{(M)}$, H{\"o}lder's inequality and assumption \hypref{HypGrowth}, we find that, for some constant $C_{K,T,\mathfrak{m}}$ depending only on $K$, $T$ and $\mathfrak{m}$, 
\begin{align*}
	&\Mean\left[ \left| \int_{\Gamma\times [0,t]} b_{\mathfrak{m}}\bigl(s,X(s),\gamma\bigr)\left(\rho^{(M)} - \rho \right)(d\gamma,ds) \right|^{2} \right] \\
	\begin{split}
	&\leq 2T \Mean\left[ \int_{0}^{T} \int_{\Gamma\setminus \Gamma_{M}} \left| b_{\mathfrak{m}}\bigl(s,X(s),\gamma\bigr)\right|^{2}\dot{\rho}_{s}(d\gamma)ds \right] \\
	&\quad + 2\Mean\left[ \rho\left((\Gamma\setminus \Gamma_{M})\times [0,T] \right)\cdot  \int_{0}^{T} \left| b_{\mathfrak{m}}\bigl(s,X(s),\gamma_{0}\bigr)\right|^{2}ds \right]
	\end{split} \\
	\begin{split}
	&\leq C_{K,T,\mathfrak{m}}\Mean\left[ \rho\bigl((\Gamma\setminus \Gamma_{M})\times [0,T] \bigr)\cdot \left(1 + \sup_{r\in [0,T]} |X(r)|^{2}\right) \right] \\
	&\quad + C_{K,T,\mathfrak{m}}\Mean\left[ \int_{\Gamma\times [0,T]} \mathbf{1}_{\Gamma\setminus \Gamma_{M}}(\gamma)\cdot |\gamma|^{2} \rho(d\gamma,ds) \right].
	\end{split}
\end{align*}
By \hypref{HypGrowth} and the usual estimates, including Gronwall's lemma, we have $\Mean\left[\sup_{r\in [0,T]} |X(r)|^{2} \right] < \infty$. Since $\rho_{\omega}$ is a measure with total mass $T$ for every $\omega\in \Omega$, we have $\rho\bigl((\Gamma\setminus \Gamma_{M})\times [0,T] \bigr) \to 0$ as $M\to \infty$ $\Prb$-almost surely. This implies, by dominated convergence,
\[
	\Mean\left[ \rho\bigl((\Gamma\setminus \Gamma_{M})\times [0,T] \bigr)\cdot \left(1 + \sup_{r\in [0,T]} |X(r)|^{2}\right) \right] \stackrel{M\to\infty}{\longrightarrow} 0.
\]
On the other hand, $\Mean\left[ \int_{\Gamma\times [0,T]} |\gamma|^{2} \rho(d\gamma,ds) \right] < \infty$ by definition of $\mathcal{U}$. This means that
\[
	\Mean\left[ \int_{\Gamma\times [0,T]} \mathbf{1}_{\Gamma\setminus \Gamma_{M}}(\gamma)\cdot |\gamma|^{2} \rho(d\gamma,ds) \right] \stackrel{M\to\infty}{\longrightarrow} 0.
\]
An application of Gronwall's lemma now yields
\[
	\Mean\left[ \left| X(t) - X^{(M)}(t) \right|^{2} \right] \stackrel{M\to\infty}{\longrightarrow} 0.
\]
This convergence together with assumption \hypref{HypCostGrowth} (and an estimate completely analogous to the one above) implies that $J_{\mathfrak{m}}(0,x;\rho^{(M)}) \to J_{\mathfrak{m}}(0,x;\rho)$ as $M\to \infty$.

\textbf{Fourth Step}. Choose a family $(\Gamma_{M,k})_{M,k\in\mathbb{N}}$ of finite subsets of $\Gamma$ such that $\Gamma_{M,k}\subset \Gamma_{M,k+1}\subset \Gamma_{M}$, $\Gamma_{M,k}\subset \Gamma_{M+1,k}$, and $\min_{\tilde{\gamma} \in \Gamma_{M,k}} |\gamma - \tilde{\gamma}| \leq 1/k$ for any $\gamma\in \Gamma_{M}$. Let $\mathcal{U}_{M,k}$ be the set of all $\rho\in \mathcal{U}$ such that $\rho$ is the $\mathcal{R}_{2}$-valued random variable induced by a $\Gamma_{M,k}$-valued adapted process that is piece-wise constant in time with respect to the equidistant grid of step size $T\cdot 2^{-k}$. Thus, $((\Omega,\mathcal{F},\Prb),(\mathcal{F}_{t}),\rho,W)\in \mathcal{U}_{M,k}$ if and only if $\rho_{\omega}(d\gamma,dt) = \delta_{u(t,\omega)}(d\gamma)dt$ for $\Prb$-almost every $\omega\in \Omega$, where $u$ is a $\Gamma_{M,k}$-valued $(\mathcal{F}_{t})$-progressively measurable process with c{\`a}dl{\`a}g trajectories that are piece-wise constant over the grid $\{0,T\cdot 2^{-k}, 2T\cdot 2^{-k}, 3T\cdot 2^{-k}, \ldots,T\}$. Let $V_{\mathfrak{m},M,k}$ be the value function defined with respect to $\mathcal{U}_{M,k}$. Then, thanks to the continuity in time and control of the coefficients according to \hypref{HypMeasCont}, a version of the chattering lemma \citep[for instance, Theorem~3.5.2 in][p.\,59]{kushner90}, and the local Lipschitz continuity of $V_{\mathfrak{m},M,k}(0,\cdot)$, which holds uniformly in $k$ and $M$ (one repeats the arguments of Step~2), we find that
\[
	V_{\mathfrak{m},M,k}(0,\cdot) \stackrel{k\to\infty}{\searrow} V_{\mathfrak{m},M}(0,\cdot) \text{ uniformly over compact subsets of }\mathbb{R}^{d}.
\]
By \eqref{EqValueFnctCompactConv} and since $\mathcal{U}_{M,k}\subset \mathcal{U}_{M,k+1}\subset \mathcal{U}_{M}$ and $\mathcal{U}_{M,k}\subset \mathcal{U}_{M+1,k}$, it follows that
\begin{equation} \label{EqApproxValueFnctConv}
	V_{\mathfrak{m},M,M}(0,\cdot) \stackrel{M\to\infty}{\searrow} V_{\mathfrak{m}}(0,\cdot)\text{ uniformly over compact subsets of }\mathbb{R}^{d}.
\end{equation}

\textbf{Fifth step}. The value function $V_{\mathfrak{m},M,k}$ coincides with the value function of a discrete-time optimal control problem defined as follows. Set $h \doteq T\cdot 2^{-k}$. Thanks to Theorem~1 in \citet{kallenberg96} and because $\Gamma_{M,k}$ is finite, we find a measurable and universally predictable function
\[
	\Phi_{\mathfrak{m},M,k}\!: \mathbb{N}_{0}\times \mathbb{R}^{d}\times \Gamma_{M,k}\times \mathbf{C}([0,h],\mathbb{R}^{d_{1}})\rightarrow \mathbb{R}^{d}
\]
such that $\Phi_{\mathfrak{m},M,k}(j,x,\gamma,W) = X((j+1)h)$ $\Prb$-almost surely whenever $X$ is the unique strong solution to
\[
\begin{split}
	X(t) &= x + \int_{0}^{t} b_{\mathfrak{m}}\bigl(j\cdot h + s,X(s),\gamma\bigr)ds \\
	&\quad + \int_{0}^{t} \sigma_{\mathfrak{m}}\bigl(j\cdot h + s,X(s)\bigr)dW(s), \quad t\in [0,h],
\end{split}
\]
where $W$ is a $d_{1}$-dimensional standard Wiener process defined on some stochastic basis $((\Omega,\mathcal{F},\Prb),(\mathcal{F}_{t}))$. The function $\Phi_{\mathfrak{m},M,k}$ is the system function of the control problem in the sense of \citet{bertsekasshreve96}. Let $\bar{\mathcal{U}}_{M,k}$ denote the set of discrete-time Markov feedback strategies with values in $\Gamma_{M,k}$, that is, the set of all Borel measurable functions $v: \mathbb{N}_{0}\times \mathbb{R}^{d} \rightarrow \Gamma_{M,k}$. To describe the path-wise evolution of the system, choose a complete probability space $(\Omega_{\circ},\mathcal{F}^{\circ},\Prb_{\circ})$ rich enough to carry a $d_{1}$-dimensional standard Wiener process $W_{\circ}$. For $j\in \mathbb{N}_{0}$, set $\zeta_{j}\doteq (W(jh+s) - W(jh))_{s\in [0,h]}$, which defines a $\mathbf{C}([0,h],\mathbb{R}^{d_{1}})$-valued random variable. Given any Markov feedback strategy $v \in \bar{\mathcal{U}}_{M,k}$ and initial condition $(j,x) \in \{0,\ldots,2^{k}\}\times \mathbb{R}^{d}$, the corresponding state sequence is defined recursively, for each $\omega \in \Omega_{\circ}$, by
\begin{equation} \label{EqLimitDynamicsDiscrete}
\begin{split}
	X_{0}(\omega)&\doteq x, \\
	X_{l+1}(\omega)&\doteq \Phi_{\mathfrak{m},M,k}\left(j+l,X_{l}(\omega),v(j+l,X_{l}(\omega)),\zeta_{l}(\omega)\right),
\end{split}
\end{equation}
$l \in \{0,\ldots,2^{k}-j-1\}$. The associated costs are given by
\[
	\bar{J}_{\mathfrak{m},M,k}(j,x,v)\doteq \Mean_{\circ}\left[\sum_{l=0}^{2^{k}-j-1} f_{\mathfrak{m}}\bigl((j+l)h,X_{l},v(j+l,X_{l})\bigr)\cdot h + F_{\mathfrak{m}}(X_{k-j})\right],
\]
where $(X_{l})$ is the state sequence generated according to \eqref{EqLimitDynamicsDiscrete} with feedback strategy $v$ and initial condition $(j,x)$. Let $\bar{V}_{\mathfrak{m},M,k}$ be the value function of the control problem just defined:
\[
	\bar{V}_{\mathfrak{m},M,k}(j,x) \doteq \inf_{v\in \bar{\mathcal{U}}_{M,k}} \bar{J}_{\mathfrak{m},M,k}(j,x,v),\quad (j,x)\in \{0,\ldots,2^{k}\}\times \mathbb{R}^{d}.
\]

By Proposition~8.6 in \citet[pp.\,209-210]{bertsekasshreve96}, the principle of dynamic programming applies to $\bar{V}_{\mathfrak{m},M,k}$. This has two consequences. First, notice that any feedback strategy $v\in \bar{\mathcal{U}}_{M,k}$ induces, for any given initial condition $(j,x)\in \{0,\ldots,2^{k}\}\times \mathbb{R}^{d}$, a relaxed control variable $\rho\in \mathcal{U}_{M,k}$ such that
\[
	\bar{J}_{\mathfrak{m},M,k}(j,x,v) = J_{\mathfrak{m}}(jh,x,\rho).
\]
This implies $\bar{V}_{\mathfrak{m},M,k}(j,x) \geq V_{\mathfrak{m},M,k}(jh,x)$ for all $(j,x)\in \{0,\ldots,2^{k}\}\times \mathbb{R}^{d}$. Since $\bar{V}_{\mathfrak{m},M,k}(2^{k},\cdot) = F_{\mathfrak{m}}(\cdot) = V_{\mathfrak{m},M,k}(2^{k}h,\cdot)$, it follows by dynamic programming for $\bar{V}_{\mathfrak{m},M,k}$ and backward induction that
\[
	\bar{V}_{\mathfrak{m},M,k}(j,x) = V_{\mathfrak{m},M,k}(jh,x)\quad  \text{for all } (j,x)\in \{0,\ldots,2^{k}\}\times \mathbb{R}^{d}.
\]
As a second consequence of the principle of dynamic programming, there exists an optimal Markov feedback strategy. More precisely, we can choose $v_{\ast} \in \bar{\mathcal{U}}_{M,k}$ such that, for every $(j,x)\in \{0,\ldots,2^{k}\}\times \mathbb{R}^{d}$,
\[
\begin{split}
	v_{\ast}(j,x)\in &\argmin_{\gamma\in \Gamma_{M,k}}\Bigl\{f_{\mathfrak{m}}(jh,x,\gamma)\cdot h \\
	&\quad + \int_{\mathbf{C}([0,h],\mathbb{R}^{d_{1}})} \bar{V}_{\mathfrak{m},M,k}\bigl(j+1,\Phi_{\mathfrak{m},M,k}(j,x,\gamma,y)\bigr)\, \eta_{h}(dy) \Bigr\},
\end{split}
\]
where $\eta_{h}$ is standard Wiener measure on $\Borel{\mathbf{C}([0,h],\mathbb{R}^{d_{1}})}$. Then
\[
	\bar{J}_{\mathfrak{m},M,k}(j,x,v_{\ast}) = \bar{V}_{\mathfrak{m},M,k}(j,x)\quad \text{for all } (j,x)\in \{0,\ldots,2^{k}\}\times \mathbb{R}^{d}.
\]

\textbf{Sixth step}. Define a function $\psi^{\mathfrak{m}}_{M,k}: [0,T]\times \mathbb{R}^{d}\times \mathcal{W} \rightarrow \Gamma_{M,k}$ as follows. Let $x\in \mathbb{R}^{d}$, $w\in \mathcal{W}$. In analogy with \eqref{EqLimitDynamicsDiscrete}, recursively define a sequence $(x_{j})_{j\in \{0,\ldots,2^{k}\}}$ by
\begin{align*}
	& x_{0}\doteq x,& & x_{j+1}\doteq \Phi_{\mathfrak{m},M,k}\left(j,x_{j},v_{\ast}(j,x_{j}),(w(jh+s)-w(jh))_{s\in [0,h]}\right).&
\end{align*}
For $j\in \{0,\ldots,2^{k}-1\}$, $s\in [0,h)$, set
\[
	\psi^{\mathfrak{m}}_{M,k}(jh+s,x,w)\doteq v_{\ast}(j,x_{j}),
\]
and set $\psi^{\mathfrak{m}}_{M,k}(T,x,w)\doteq v_{\ast}(2^{k},x_{k})$. By construction, $\psi^{\mathfrak{m}}_{M,k}$ is progressively measurable with values in a finite set. Let $((\Omega,\mathcal{F},\Prb),(\mathcal{F}_{t}))$ be a stochastic basis rich enough to carry a $d_{1}$-dimensional $(\mathcal{F}_{t})$-Wiener process $W$ and an $\mathbb{R}^{d}$-valued $\mathcal{F}_{0}$-measurable random variable $\xi$ such that $\Prb\circ \xi^{-1} = \mathfrak{m}(0)$. For every $x\in \mathbb{R}^{d}$, the process $\psi^{\mathfrak{m}}_{M,k}(t,x,W)$ induces a relaxed control random variable $\rho$ such that $((\Omega,\mathcal{F},\Prb),(\mathcal{F}_{t}),\rho,W)\in \mathcal{U}_{M,k}$ and $J_{\mathfrak{m}}(0,x,\rho) = V_{\mathfrak{m},M,k}(0,x)$. Let $\rho^{M,k}$ be the relaxed control random variable in $\mathcal{U}_{M,k}$ induced by the process $\psi^{\mathfrak{m}}_{M,k}(t,\xi,W)$. Let $X_{M,k}$ be the unique continuous $(\mathcal{F}_{t})$-adapted process such that $X_{M,k}(0) = \xi$ and $((\Omega,\mathcal{F},\Prb),(\mathcal{F}_{t}),X_{M,k},\rho^{M,k},W)$ is a solution of Eq.~\eqref{EqLimitDynamicsRel} with flow of measures $\mathfrak{m}$. Set
\[
	\Theta^{\mathfrak{m}}_{M,k}\doteq \Prb\circ (X_{M,k},\rho^{M,k},W)^{-1}.
\]
Then $\Theta^{\mathfrak{m}}_{M,k}\in \prbms[2]{\mathcal{Z}}$ and $\Theta^{\mathfrak{m}}_{M,k}$ is a solution of Eq.~\eqref{EqLimitDynamicsRel} with flow of measures $\mathfrak{m}$ such that $\Theta^{\mathfrak{m}}_{M,k}\circ (\hat{X}(0))^{-1} = \mathfrak{m}(0)$, $\hat{\rho}(d\gamma,dt) = \delta_{\psi^{\mathfrak{m}}_{M,k}\left(t,\hat{X}(0),\hat{W}\right)}(d\gamma)dt$ with probability one under $\Theta^{\mathfrak{m}}_{M,k}$, and
\[
	\hat{J}(\mathfrak{m}(0),\Theta^{\mathfrak{m}}_{M,k};\mathfrak{m}) = \int_{\mathbb{R}^{d}} V_{\mathfrak{m},M,k}(0,x) \mathfrak{m}(0)(dx) < \infty.
\]
By \eqref{EqApproxValueFnctConv} and dominated convergence, it follows that
\[
	\hat{J}(\mathfrak{m}(0),\Theta^{\mathfrak{m}}_{M,M};\mathfrak{m}) \stackrel{M\to\infty}{\searrow} \hat{V}(\mathfrak{m}(0);\mathfrak{m}).
\]
Hence, given any $\epsilon > 0$, there exists $M(\epsilon)\in \mathbb{N}$ such that, for all $M \geq M(\epsilon)$, $\hat{J}(\mathfrak{m}(0),\Theta^{\mathfrak{m}}_{M,M};\mathfrak{m}) \leq \hat{V}(\mathfrak{m}(0);\mathfrak{m}) + \epsilon$.
\end{proof}

\begin{rem}
	The conditions of Lemma~\ref{LemmaNoiseFeedback} do not determine $\psi^{\mathfrak{m}}_{\epsilon}$ in a unique way. On the other hand, once $\psi^{\mathfrak{m}}_{\epsilon}$ has been constructed, the probability measure $\Theta^{\mathfrak{m}}_{\epsilon}$ is uniquely determined as the law of the solution of Eq.~\eqref{EqLimitDynamicsRel} with flow of measures $\mathfrak{m}$, initial distribution $\mathfrak{m}(0)$ and control process $u$ given by $u(t)\doteq \psi^{\mathfrak{m}}_{\epsilon}(t,X(0),W)$, $t\in [0,T]$, where $W$ is the driving Wiener process and $u$ is identified with its relaxed control random variable. Notice that $u$ is square-integrable since $\psi^{\mathfrak{m}}_{\epsilon}$ takes values in a finite subset of $\Gamma$. 
\end{rem}

%-----

\section{Convergence of Nash equilibria} \label{SectConvergence}

For $N\in \mathbb{N}$, let $u^{N}_{1},\ldots,u^{N}_{N}\in \mathcal{H}_{2}((\mathcal{F}^{N}_{t}),\Prb_{N};\Gamma)$ be individual strategies for the $N$-player game, and let $\boldsymbol{u}^{N} \doteq (u^{N}_{1},\ldots,u^{N}_{N})$ be the corresponding strategy vector. Let $Q^{N}$ be the normalized occupation measure associated with $\boldsymbol{u}^{N}$. More precisely, $Q^{N}$ is the $\prbms[2]{\mathcal{Z}}$-valued random variable determined by setting, for $B\in \Borel{\mathcal{X}}$, $R\in \Borel{\mathcal{R}_{2}}$, $D\in \Borel{\mathcal{W}}$,
\begin{equation} \label{ExOccupationMeasure}
	Q^{N}_{\omega}(B\times R\times D) \doteq \frac{1}{N} \sum_{i=1}^{N} \delta_{X^{N}_{i}(\cdot,\omega)}(B)\cdot \delta_{\rho^{N,i}_{\omega}}(R)\cdot \delta_{W^{N}_{i}(\cdot,\omega)}(D),\; \omega\in \Omega_{N},
\end{equation}
where $(X^{N}_{1},\ldots,X^{N}_{N})$ is the solution of the system of equations \eqref{EqPrelimitDynamics} under strategy vector $\boldsymbol{u}^{N}$, and $\rho^{N,i}$ is the relaxed control associated with individual strategy $u^{N}_{i}$, $i\in \{1,\ldots,N\}$.

Convergence results will be obtained under the hypothesis that
\begin{equation} \label{CondTightness}
\tag{T}
	\exists\,\delta_{0} > 0:\; \sup_{N\in\mathbb{N}} \Mean_{N}\left[ \frac{1}{N} \sum_{i=1}^{N}\left( |\xi^{N}_{i}|^{2+\delta_{0}} + \int_{0}^{T} |u^{N}_{i}(t)|^{2+\delta_{0}}dt \right) \right] < \infty.
\end{equation}
Whenever \eqref{CondTightness} holds, we will---as we may---suppose that $\delta_{0} \in (0,1\wedge T]$.

\begin{rem}
Condition~\eqref{CondTightness} is automatically satisfied if the action space $\Gamma$ is compact and the initial states, i.e., the random variables $\xi^{N}_{i}$, $N\in \mathbb{N}$, $i\in \{1,\ldots,N\}$, are uniformly bounded.
\end{rem}

\begin{lemma} \label{LemmaTightness}
If condition~\eqref{CondTightness} holds, then the family $(\Prb_{N}\circ (Q^{N})^{-1})_{N\in\mathbb{N}}$ is pre-compact in $\prbms{\prbms[2]{\mathcal{Z}}}$.
\end{lemma}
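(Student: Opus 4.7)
The plan is to exhibit, for each $\epsilon > 0$, a relatively compact subset $\mathcal{K}_\epsilon \subset \prbms[2]{\mathcal{Z}}$ such that $\sup_{N \in \mathbb{N}} \Prb_N(Q^N \notin \mathcal{K}_\epsilon) < \epsilon$. Relative compactness of $\mathcal{K} \subset \prbms[2]{\mathcal{Z}}$ is equivalent to two conditions: (i) tightness of $\mathcal{K}$ in the weak topology of $\prbms{\mathcal{Z}}$, and (ii) uniform integrability of the Wasserstein integrand $\mathrm{d}_\mathcal{Z}(z,z_0)^2$. Since the metric $\mathrm{d}_\mathcal{Z}$ is bounded with respect to the $\mathcal{R}_2$ and $\mathcal{W}$ coordinates, only the $\|\phi\|_\mathcal{X}^2$-contribution is non-trivial for (ii); this is where the \emph{strict} excess $\delta_0 > 0$ in \eqref{CondTightness} enters.

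For (ii), Lemma \ref{LemmaGrowthBounds2} with $p = 2+\delta_0$, together with \eqref{CondTightness}, yields
\[
    \sup_{N \in \mathbb{N}} \Mean_N\!\left[\int_\mathcal{Z}\|\phi\|_\mathcal{X}^{2+\delta_0}\,Q^N(d\phi,dr,dw)\right]
    \;=\; \sup_{N \in \mathbb{N}} \frac{1}{N}\sum_{i=1}^N \Mean_N\!\left[\|X^N_i\|_\mathcal{X}^{2+\delta_0}\right]
    \;<\; \infty.
\]
By Markov's inequality, I can fix $C_\epsilon > 0$ with $\Prb_N(\int\|\phi\|_\mathcal{X}^{2+\delta_0}\,dQ^N > C_\epsilon) < \epsilon/2$ uniformly in $N$, and the set $\{\nu \in \prbms[2]{\mathcal{Z}} : \int\|\phi\|_\mathcal{X}^{2+\delta_0}\,d\nu \leq C_\epsilon\}$ has uniformly integrable $\|\phi\|_\mathcal{X}^2$ by de la Vallée-Poussin. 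For (i), I would build a compact product set $K_\delta = K^\mathcal{X}_\delta \times K^{\mathcal{R}_2}_\delta \times K^\mathcal{W}_\delta \subset \mathcal{Z}$ with $\sup_N \Mean_N[Q^N(K_\delta^c)] < \delta^2$, which yields $\Prb_N(Q^N(K_\delta^c) > \delta) < \delta$ by Markov applied to the $\prbms{\mathcal{Z}}$-valued random measure $Q^N$. On $\mathcal{X}$, Arzelà-Ascoli reduces tightness to the uniform sup-norm bound of Lemma \ref{LemmaGrowthBounds2} together with a modulus-of-continuity estimate of the form
\[
    \frac{1}{N}\sum_{i=1}^N \Mean_N\!\left[|X^N_i(t) - X^N_i(s)|^{2+\delta_0}\right] \leq C\,|t-s|^{1+\delta_0/2},
\]
with $C$ independent of $N$; this is obtained from \eqref{EqPrelimitDynamics} by applying Jensen's and Hölder's inequalities to the drift term and the Burkholder-Davis-Gundy inequality to the stochastic integral, using \hypref{HypGrowth}, the $(2+\delta_0)$-moment bound on the measure flow $\mu^N$ obtained from Lemma \ref{LemmaGrowthBounds2} via Jensen, and \eqref{CondTightness}. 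Tightness on $\mathcal{R}_2$ holds for $K^{\mathcal{R}_2}_\delta \doteq \{\rho \in \mathcal{R}_2 : \int|\gamma|^{2+\delta_0}\,\rho(d\gamma,dt) \leq M_\delta\}$, since the extra $\delta_0$-integrability upgrades weak tightness in $\prbms{\Gamma\times[0,T]}$ to convergence of second moments (the time marginal being automatically Lebesgue); again the probability bound comes from Markov and \eqref{CondTightness}. Tightness on $\mathcal{W}$ for Wiener trajectories is standard.

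Picking a summable sequence $\delta_m \downarrow 0$ with $\sum_m \delta_m < \epsilon/2$, the set
\[
    \mathcal{K}_\epsilon \doteq \left\{\nu \in \prbms[2]{\mathcal{Z}} \,:\, \int \|\phi\|_\mathcal{X}^{2+\delta_0}\,d\nu \leq C_\epsilon\text{ and }\nu(K_{\delta_m}^c) \leq \delta_m\;\forall m\right\}
\]
is relatively compact in $\prbms[2]{\mathcal{Z}}$, and $\Prb_N(Q^N \notin \mathcal{K}_\epsilon) < \epsilon$ by a union bound over the countably many events involved. I expect the main technical obstacle to be the uniform modulus-of-continuity estimate on $\mathcal{X}$: because the drift depends both on the random empirical measure $\mu^N(s)$ and on the player-dependent control $u^N_i(s)$, one cannot work pointwise in $i$ but must carry out the estimate on the average $\frac{1}{N}\sum_i$, using the averaged bounds of Lemma \ref{LemmaGrowthBounds2}, and the strict positivity $\delta_0 > 0$ is needed to obtain the exponent $1+\delta_0/2 > 1$ required by Kolmogorov's tightness criterion.
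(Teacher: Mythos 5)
Your proposal is correct and arrives at the result by a route that is conceptually parallel to the paper's but packaged differently. The paper works in the Dupuis--Ellis ``tightness function'' framework: it defines a single explicit functional $g$ on $\prbms[2]{\mathcal{Z}}$ (integrating $\|\phi\|_{\mathcal{X}}^{2+\delta_0}$, $\int|\gamma|^{2+\delta_0}\,r$, $|w(0)|$, and $\sup_{h}h^{-\alpha}(\mathbf{w}_{\phi}(h,T)+\mathbf{w}_{w}(h,T))$), verifies in an appendix that $g$ has pre-compact sublevel sets, and then proves $\sup_{N}\Mean_{N}[g(Q^{N})]<\infty$. You instead construct, for each $\epsilon$, a relatively compact $\mathcal{K}_\epsilon\subset\prbms[2]{\mathcal{Z}}$ that $Q^N$ enters with probability at least $1-\epsilon$, splitting the job into tightness in $\prbms{\mathcal{Z}}$ plus uniform integrability of $\|\phi\|_\mathcal{X}^2$; these are two standard dual formulations of the same idea and both lean on Lemma~\ref{LemmaGrowthBounds2}, assumption~\hypref{HypGrowth}, and the strict surplus $\delta_0>0$ in \eqref{CondTightness} to get the needed $(2+\delta_0)$-moments. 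The one place where your argument is technically distinct is the control of continuity on $\mathcal{X}$: the paper needs a pointwise bound on $\Mean_N[\mathbf{w}_{X^N_i}(h,T)^p]$ to estimate the tightness function directly, and for that it imports the sharp $\bigl(h\log(1/h)\bigr)^{p/2}$ modulus-of-continuity estimates for It\^o processes from Fischer--Nappo; you instead appeal to Kolmogorov's criterion from the averaged increment bound $\tfrac{1}{N}\sum_i\Mean_N[|X^N_i(t)-X^N_i(s)|^{2+\delta_0}]\leq C|t-s|^{1+\delta_0/2}$, which is a more elementary route that suffices for tightness but would not on its own bound the paper's functional $g$. Your treatments of the $\mathcal{R}_2$ marginal (via $\delta_0$-excess to upgrade weak tightness to second-moment convergence) and the uniform integrability step (de la Vall\'ee-Poussin on $\|\phi\|^{2+\delta_0}$) mirror what the paper does in Appendices~\ref{AppTightnessFnctControls} and~\ref{AppTightnessFnctMeasures}.
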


\begin{proof}
We verify that condition~\eqref{CondTightness} implies the pre-compactness of the family $(\Prb_{N}\circ (Q^{N})^{-1})_{N\in\mathbb{N}}$ by using a suitable tightness function on $\prbms[2]{\mathcal{Z}}$. For a function $\psi$ on $[0,T]$ with values in $\mathbb{R}^{d}$ or $\mathbb{R}^{d_{1}}$, let $\mathbf{w}_{\psi}(\cdot,T)$ denote the modulus of continuity of $\psi$ on $[0,T]$, that is, the function
\[
	[0,\infty)\ni h \mapsto \mathbf{w}_{\psi}(h,T)\doteq \sup_{t,s\in [0,T]: |t-s|\leq h} |\psi(t)-\psi(s)| \in [0,\infty].
\]
If $\psi$ is continuous, then the modulus of continuity of $\psi$ takes values in $[0,\infty)$. Clearly, $\mathbf{w}_{\psi}(h,T) = \mathbf{w}_{\psi}(T,T)$ whenever $h > T$. Choose $\delta_{0} > 0$ according to condition~\eqref{CondTightness}, and set $\alpha\doteq \frac{\delta_{0}}{2(8+\delta_{0})}$. Define the function $g\!: \prbms[2]{\mathcal{Z}} \rightarrow [0,\infty]$ by
\begin{equation} \label{ExTFunctMeasures}
\begin{split}
	g(\Theta)&\doteq \int_{\mathcal{Z}}\left( \|\phi\|_{\mathcal{X}}^{2+\delta_{0}} + |w(0)|  + \int_{\Gamma\times [0,T]} |\gamma|^{2+\delta_{0}}\, r(d\gamma,dt) \right. \\
	&\qquad+ \left. \sup_{h\in (0,1]}\left\{ h^{-\alpha} \left(\mathbf{w}_{\phi}(h,T) + \mathbf{w}_{w}(h,T)\right) \right\} \right) \Theta(d\phi,dr,dw).
\end{split}
\end{equation}
Then $g$ is a tightness function on $\prbms[2]{\mathcal{Z}}$; see Appendix~\ref{AppTightnessFnctMeasures}. It is therefore enough to check that condition~\eqref{CondTightness} entails $\sup_{N\in \mathbb{N}} \Mean_{N}\left[ g(Q^{N}) \right] < \infty$. By definition of $Q^{N}$ and $g$,
\[
\begin{split}
	\Mean_{N}\left[ g(Q^{N}) \right] &= \frac{1}{N}\sum_{i=1}^{N} \Mean_{N}\left[ \|X^{N}_{i}\|_{\mathcal{X}}^{2+\delta_{0}} + \int_{0}^{T} |u^{N}_{i}(t)|^{2+\delta_{0}}dt \right]\\
	&\quad + \frac{1}{N}\sum_{i=1}^{N} \Mean_{N}\left[\sup_{h\in (0,1]}\left\{ h^{-\alpha} \left(\mathbf{w}_{X^{N}_{i}}(h,T) + \mathbf{w}_{W^{N}_{i}}(h,T)\right) \right\} \right].
\end{split}
\]
By Lemma~\ref{LemmaGrowthBounds2} and condition~\eqref{CondTightness},
\[
	\sup_{N\in \mathbb{N}} \left\{ \frac{1}{N}\sum_{i=1}^{N} \Mean_{N}\left[ \|X^{N}_{i}\|_{\mathcal{X}}^{2+\delta_{0}} + \int_{0}^{T} |u^{N}_{i}(t)|^{2+\delta_{0}}dt \right] \right\}< \infty.
\]
As to the terms involving the moduli of continuity, set $p\doteq 2 + \delta_{0}/2$; then, by monotonicity of $h\mapsto h^{-\alpha}$ and Markov's inequality (as well as Jensen's inequality),
\begin{align*}
	& \frac{1}{N}\sum_{i=1}^{N} \Mean_{N}\left[\sup_{h\in (0,1]}\left\{ h^{-\alpha} \left(\mathbf{w}_{X^{N}_{i}}(h,T) + \mathbf{w}_{W^{N}_{i}}(h,T)\right) \right\} \right] \\
	&\leq \frac{1}{N}\sum_{i=1}^{N} \Mean_{N}\left[\sup_{k\in \mathbb{N} : k \geq 1/T}\left\{ (k+1)^{\alpha} \left(\mathbf{w}_{X^{N}_{i}}\left(\tfrac{1}{k},T\right) + \mathbf{w}_{W^{N}_{i}}\left(\tfrac{1}{k},T\right) \right) \right\} \right] \\
%	&= \frac{1}{N}\sum_{i=1}^{N} \int_{0}^{\infty} \Prb_{N}\left( \sup_{k\in \mathbb{N}} (k+1)^{\alpha} \left(\mathbf{w}_{X^{N}_{i}}\left(\tfrac{1}{k},T\right) + \mathbf{w}_{W^{N}_{i}}\left(\tfrac{1}{k},T\right) \right) \geq M \right)dM \\
	&\leq 1 + \frac{1}{N}\sum_{i=1}^{N} \int_{1}^{\infty} \sum_{k=1}^{\infty} \Prb_{N}\left( \mathbf{w}_{X^{N}_{i}}\left(\tfrac{1}{k},T\right) + \mathbf{w}_{W^{N}_{i}}\left(\tfrac{1}{k},T\right) \geq \frac{M}{(k+1)^{\alpha}} \right)dM \\
	&\leq 1 + \sum_{k=1}^{\infty} (k+1)^{\alpha\cdot p}\left( \frac{1}{N}\sum_{i=1}^{N}\Mean_{N}\left[ \mathbf{w}_{X^{N}_{i}}\left(\tfrac{1}{k},T\right)^{p} + \mathbf{w}_{W^{N}_{i}}\left(\tfrac{1}{k},T\right)^{p} \right] \right) \frac{2^{p-1}}{p-1},
\end{align*}
where we have used that $\int_{1}^{\infty} M^{-p}\,dM = 1/(p-1)$ since $p > 1$. To find an upper bound for the above sums that does not depend on $N$, we employ estimates on the moments of the modulus of continuity of It{\^o} processes; cf.\ \citet{fischernappo10} and the references therein. Since $W^{N}_{1},\ldots,W^{N}_{N}$ are standard $d_{1}$-dimensional Wiener processes, we have by Lemma~3 of that paper and H{\"o}lder's inequality that there exists a finite constant $\bar{C}_{p,d_{1}}$ depending only on $p$ and $d_{1}$ such that, for every $i\in \{1,\ldots,N\}$, every $k\in \mathbb{N}$ with $k \geq 1/T$,
\[
	\Mean_{N}\left[ \mathbf{w}_{W^{N}_{i}}\left(\tfrac{1}{k},T\right)^{p} \right] \leq \bar{C}_{p,d_{1}} \left( \frac{\log(2T k)}{k}\right)^{p/2}.
\]
Recall that $p = 2 + \delta_{0}/2$. By Theorem~1 in \citet{fischernappo10}, there exists a finite constant $\bar{C}_{\delta_{0},d,d_{1}}$ depending only on $\delta_{0}$ (through $p = 2 + \delta_{0}/2$ and $\delta_{0}/2 = 2 + \delta_{0} - p$), $d$, and $d_{1}$ such that, for every $k\in \mathbb{N}$ with $k \geq 1/T$,
\begin{align*}
	&\frac{1}{N}\sum_{i=1}^{N} \Mean_{N}\left[ \mathbf{w}_{X^{N}_{i}}\left(\tfrac{1}{k},T\right)^{2+\delta_{0}/2} \right] \\
	\begin{split}
	&\leq \bar{C}_{\delta_{0},d,d_{1}} \left( \frac{\log(2T k)}{k}\right)^{1+\delta_{0}/4}  \\
	&\quad \cdot\left(\frac{1}{N}\sum_{i=1}^{N} \Mean_{N}\left[\sup_{s,t\in [0,T]:s<t} \left(\frac{\int_{s}^{t}\left|b\bigl(\tilde{s},X^{N}_{i}(\tilde{s}),\mu^{N}(\tilde{s}),u^{N}_{i}(\tilde{s})\bigr)\right|d\tilde{s}}{\sqrt{|t-s|}}\right)^{2+\delta_{0}/2}\right] \right.\\
	&\qquad \left. + \frac{1}{N}\sum_{i=1}^{N} \Mean_{N}\left[\sup_{s\in [0,T]} \left|\sigma\bigl(s,X^{N}_{i}(s),\mu^{N}(s)\bigr)\right|^{2+\delta_{0}} \right] + 1\right).
	\end{split}
\end{align*}
Thanks to assumption~\hypref{HypGrowth}, Lemma~\ref{LemmaGrowthBounds2} and condition~\eqref{CondTightness}, we have
\[
	\sup_{N\in\mathbb{N}} \left\{ \frac{1}{N}\sum_{i=1}^{N} \Mean_{N}\left[\sup_{s\in [0,T]} \left|\sigma\bigl(s,X^{N}_{i}(s),\mu^{N}(s)\bigr)\right|^{2+\delta_{0}} \right] \right\} < \infty.
\]
On the other hand, by H{\"o}lder's inequality,
\begin{align*}
	&\frac{1}{N}\sum_{i=1}^{N} \Mean_{N}\left[\sup_{s,t\in [0,T]:s<t} \left(\frac{\int_{s}^{t}\left|b\bigl(\tilde{s},X^{N}_{i}(\tilde{s}),\mu^{N}(\tilde{s}),u^{N}_{i}(\tilde{s})\bigr)\right|d\tilde{s}}{\sqrt{|t-s|}}\right)^{2+\delta_{0}/2}\right] \\
%	&\leq \frac{1}{N}\sum_{i=1}^{N} \Mean_{N}\left[ \left(\int_{0}^{T}\left|b\bigl(\tilde{s},X^{N}_{i}(\tilde{s}),\mu^{N}(\tilde{s}),u^{N}_{i}(\tilde{s})\bigr)\right|^{2}d\tilde{s}\right)^{1+\delta_{0}/4}\right] \\
	&\leq T^{\delta_{0}/4}\cdot \frac{1}{N}\sum_{i=1}^{N} \Mean_{N}\left[ \int_{0}^{T}\left|b\bigl(\tilde{s},X^{N}_{i}(\tilde{s}),\mu^{N}(\tilde{s}),u^{N}_{i}(\tilde{s})\bigr)\right|^{2+\delta_{0}/2} d\tilde{s} \right],
\end{align*}
and, thanks to assumption~\hypref{HypGrowth}, Lemma~\ref{LemmaGrowthBounds} and condition~\eqref{CondTightness},
\[
	\sup_{N\in \mathbb{N}}\left\{ \frac{1}{N}\sum_{i=1}^{N} \Mean_{N}\left[ \int_{0}^{T}\left|b\bigl(\tilde{s},X^{N}_{i}(\tilde{s}),\mu^{N}(\tilde{s}),u^{N}_{i}(\tilde{s})\bigr)\right|^{2+\delta_{0}/2} d\tilde{s} \right] \right\} < \infty.
\]
Recall that $\alpha = \frac{\delta_{0}}{2(8+\delta_{0})}$ and $p = 2 + \delta_{0}/2$. It follows that, for some finite constant $\bar{C}_{K,T,\delta_{0},d,d_{1}}$ not depending on $N$,
\begin{multline*}
	\sup_{N\in \mathbb{N}} \left\{ \frac{1}{N}\sum_{i=1}^{N} \Mean_{N}\left[\sup_{h\in (0,1]}\left\{ h^{-\alpha} \left(\mathbf{w}_{X^{N}_{i}}(h,T) + \mathbf{w}_{W^{N}_{i}}(h,T)\right) \right\} \right] \right\}\\
	\leq \bar{C}_{K,T,\delta_{0},d,d_{1}}\left(1 + \sum_{k=1}^{\infty} (k+1)^{\alpha\cdot p}\left( \frac{\log(2T k)}{k}\right)^{p/2} \right),
\end{multline*}
where the infinite sum on the right-hand side above has a finite limit since $p/2 - \alpha\cdot p = (8+2\delta_{0})/(8+\delta_{0}) > 1$.
\end{proof}

\bigskip
Below, we will use the symbol $\mathbb{I}$ to indicate the index set of a (convergent) subsequence; thus $\mathbb{I}$ is a subset of $\mathbb{N}$ with the natural ordering and $\#\mathbb{I} = \infty$.

\begin{lemma} \label{LemmaCoupling}
Suppose that $(\Prb_{n}\circ \xi^{n}_{i^{n}_{\ast}})_{n\in\mathbb{I}}$ converges in $\prbms[2]{\mathbb{R}^{d}}$ to some $\bar{\nu}\in \prbms[2]{\mathbb{R}^{d}}$, where, for each $n\in \mathbb{I}$, $i^{n}_{\ast} \in \{1,\ldots,n\}$. Then there exists a sequence $(\bar{\xi}^{n})_{n\in\mathbb{I}}$ of $\mathbb{R}^{d}$-valued random variables such that the following hold:
\begin{enumerate}[(i)] 
	\item for every $n\in \mathbb{I}$, $\bar{\xi}^{n}$ is defined on $(\Omega_{n},\mathcal{F}^{n})$, measurable with respect to $\sigma(\xi^{n}_{i^{n}_{\ast}},\vartheta^{n}_{i^{n}_{\ast}}) \subset \mathcal{F}^{n}_{0}$, and such that $\Prb_{n}\circ (\bar{\xi}^{n})^{-1} = \bar{\nu}$;
	
	\item $\Mean_{n} \left[ |\xi^{n}_{i^{n}_{\ast}} - \bar{\xi}^{n}|^{2}\right] \to 0$ as $n\to \infty$.
\end{enumerate}
\end{lemma}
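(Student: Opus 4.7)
The plan is to exploit the independent uniform randomizer $\vartheta^n_{i^n_*}$ to realize, on the given probability space, a random variable whose joint law with $\xi^n_{i^n_*}$ is an optimal Wasserstein coupling of $\nu_n \doteq \Prb_n\circ(\xi^n_{i^n_*})^{-1}$ and $\bar{\nu}$. By hypothesis, $\mathrm{d}_2(\nu_n,\bar{\nu})\to 0$ along $\mathbb{I}$; the point of the lemma is that the approximating random variable is a measurable function of $\xi^n_{i^n_*}$ and $\vartheta^n_{i^n_*}$ alone, which is the only local information (in $\mathcal{F}^n_0$) one is allowed to use.

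First I would pick, for each $n\in\mathbb{I}$, an optimal transport plan $\alpha_n\in\prbms{\mathbb{R}^d\times\mathbb{R}^d}$ with marginals $\nu_n$ and $\bar{\nu}$ realizing the square Wasserstein distance, so that
\[
\int_{\mathbb{R}^d\times\mathbb{R}^d}|x-y|^2\,\alpha_n(dx,dy) = \mathrm{d}_2(\nu_n,\bar{\nu})^2.
\]
Existence is classical for probability measures with finite second moments on a Polish space. Next, since $\mathbb{R}^d$ is Polish, I would disintegrate $\alpha_n(dx,dy) = \nu_n(dx)\,K_n(x,dy)$, obtaining a Borel probability kernel $K_n$ on $\mathbb{R}^d\times\Borel{\mathbb{R}^d}$.

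Now comes the step that must be argued carefully but is standard: there exists a Borel measurable map $\phi_n\colon\mathbb{R}^d\times[0,1]\to\mathbb{R}^d$ such that, whenever $X$ has law $\nu_n$ and $U$ is uniform on $[0,1]$ with $U\perp\!\!\!\perp X$, the random variable $\phi_n(X,U)$ has conditional distribution $K_n(X,\cdot)$ given $X$. This is the measurable-transfer result cited by the author elsewhere in the paper, namely Theorem~6.10 in \citet{kallenberg01}. I would then define
\[
\bar{\xi}^n \doteq \phi_n\bigl(\xi^n_{i^n_*},\vartheta^n_{i^n_*}\bigr).
\]
Because $\vartheta^n_{i^n_*}$ is $\mathcal{F}^n_0$-measurable, uniform on $[0,1]$, and independent of $\xi^n_{i^n_*}$ by the standing assumptions of Section~\ref{SectPrelimitSystems}, the random variable $\bar{\xi}^n$ is $\sigma(\xi^n_{i^n_*},\vartheta^n_{i^n_*})$-measurable (hence $\mathcal{F}^n_0$-measurable) and the joint law of $(\xi^n_{i^n_*},\bar{\xi}^n)$ under $\Prb_n$ coincides with $\alpha_n$. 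In particular $\Prb_n\circ(\bar{\xi}^n)^{-1} = \bar{\nu}$, which is (i); and
\[
\Mean_n\bigl[|\xi^n_{i^n_*}-\bar{\xi}^n|^2\bigr] = \int|x-y|^2\,\alpha_n(dx,dy) = \mathrm{d}_2(\nu_n,\bar{\nu})^2 \xrightarrow[n\to\infty]{}0,
\]
which is (ii).

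The only genuine technical point is the invocation of the measurable-transfer theorem to turn the kernel $K_n$ into a deterministic function of $(\xi^n_{i^n_*},\vartheta^n_{i^n_*})$; the rest is the definition of the Wasserstein metric and independence. I do not foresee any additional obstacle, since everything else (optimality of $\alpha_n$, disintegration, convergence of moments) is standard on Polish spaces.
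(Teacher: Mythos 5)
Your proposal is correct and follows essentially the same route as the paper: choose an optimal Kantorovich coupling $\alpha_n$ of $\nu_n$ and $\bar\nu$, invoke Kallenberg's measurable-transfer theorem (Theorem~6.10) to realize it as $\bar\xi^n=\phi_n(\xi^n_{i^n_*},\vartheta^n_{i^n_*})$ using the independent uniform randomizer, and read off (i) and (ii). The extra intermediate step of explicitly disintegrating $\alpha_n$ into a kernel is harmless but not needed — the transfer theorem already produces the function $\phi_n$ directly from the joint law $\alpha_n$.
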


\begin{proof}
Set $\nu_{n}\doteq \Prb_{n}\circ (\xi^{n}_{i^{n}_{\ast}})^{-1}$. By hypothesis,
\[
	\mathrm{d}_{2}\left(\nu_{n},\bar{\nu}\right) \stackrel{n\to\infty}{\longrightarrow} 0.
\]
Let $n\in \mathbb{I}$. By definition of the square Wasserstein metric,
\[
	\mathrm{d}_{2}\left(\nu_{n},\bar{\nu}\right)^{2} = \inf_{\alpha\in \prbms{\mathbb{R}^{d}\times \mathbb{R}^{d}}: [\alpha]_{1} = \nu_{n}\text{ and } [\alpha]_{2} = \bar{\nu}} \int_{\mathbb{R}^{d}\times \mathbb{R}^{d}} |x-\tilde{x}|^{2}\, \alpha(dx,d\tilde{x}).
\]
The infimum in the above equation is attained; see, for instance, Theorem~1.3 (Kantorovich's theorem) in \citet[pp.\,19-20]{villani03}. Thus, there exists $\alpha^{n}_{\ast}\in \prbms{\mathbb{R}^{d}\times \mathbb{R}^{d}}$ such that $[\alpha^{n}_{\ast}]_{1} = \nu_{n}$, $[\alpha^{n}_{\ast}]_{2} = \bar{\nu}$ and
\[
	\mathrm{d}_{2}\left(\nu_{n},\bar{\nu}\right)^{2} = \int_{\mathbb{R}^{d}\times \mathbb{R}^{d}} |x-\tilde{x}|^{2}\, \alpha^{n}_{\ast}(dx,d\tilde{x}).
\]
Recall that $\vartheta^{n}_{1},\ldots,\vartheta^{n}_{n}$ are independent $\mathcal{F}^{n}_{0}$-measurable random variables which are uniformly distributed on $[0,1]$ and independent of the $\sigma$-algebra generated by $\xi^{n}_{1},\ldots,\xi^{n}_{n}$, $W^{n}_{1},\ldots,W^{n}_{n}$. By Theorem~6.10 in \citet[p.\,112]{kallenberg01} on measurable transfers, there exists a measurable function $\phi_{n}\!: \mathbb{R}^{d}\times [0,1] \rightarrow \mathbb{R}^{d}$ such that
\[
	\Prb_{n}\circ \left(\xi^{n}_{i^{n}_{\ast}}, \phi_{n}(\xi^{n}_{i^{n}_{\ast}},\vartheta^{n}_{i^{n}_{\ast}}) \right)^{-1} = \alpha^{n}_{\ast}.
\] 
Set $\bar{\xi}^{n} \doteq \phi_{n}(\xi^{n}_{i^{n}_{\ast}},\vartheta^{n}_{i^{n}_{\ast}})$. Then $\bar{\xi}^{n}$ is $\sigma(\xi^{n}_{i^{n}_{\ast}},\vartheta^{n}_{i^{n}_{\ast}})$-measurable, $\Prb_{n}\circ \left(\bar{\xi}^{n}\right)^{-1} = \bar{\nu}$, and 
\[
	\Mean_{n} \left[ |\xi^{n}_{i^{n}_{\ast}} - \bar{\xi}^{n}|^{2}\right] = \mathrm{d}_{2}\left(\nu_{n},\bar{\nu}\right)^{2},
\]
which tends to zero as $n\to \infty$.
\end{proof}

\begin{lemma} \label{LemmaConvergence}
Grant condition~\eqref{CondTightness}. Let $(Q^{n})_{n\in\mathbb{I}}$ be a subsequence that converges in distribution to some $\prbms[2]{\mathcal{Z}}$-valued random variable $Q$ defined on some probability space $(\Omega,\mathcal{F},\Prb)$. Set
\[
	\mu_{\omega}(t)\doteq Q_{\omega}\circ \hat{X}(t)^{-1},\quad t\in [0,T],\; \omega\in \Omega.
\]
Then for $\Prb$-almost every $\omega\in \Omega$, $\mu_{\omega}\in \mathcal{M}_{2}$ and $Q_{\omega}$ is a solution of Eq.~\eqref{EqLimitDynamicsRel} with flow of measures $\mu_{\omega}$. Moreover,
\begin{equation*}
	\liminf_{\mathbb{I}\ni n\to\infty} \frac{1}{n} \sum_{i=1}^{n} J^{n}_{i}(\boldsymbol{u}^{n}) \geq \int_{\Omega} \hat{J}\bigl(\mu_{\omega}(0),Q_{\omega},\mu_{\omega}\bigr) \Prb(d\omega).
\end{equation*}
\end{lemma}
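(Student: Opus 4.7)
The claim splits into two parts: the McKean-Vlasov identification of $Q_\omega$ for $\Prb$-a.e.\ $\omega$, and the liminf inequality for the costs. The flow regularity $\mu_\omega \in \mathcal{M}_2$ is immediate from Remark~\ref{RemMcKeanVlasov} once one notices that the tightness function $g$ of \eqref{ExTFunctMeasures} together with condition~\eqref{CondTightness} forces $Q_\omega \in \prbms[2]{\mathcal{Z}}$ $\Prb$-almost surely; the condition $\hat{W}(0)=0$ under $Q_\omega$ passes trivially to the limit since $W^n_i(0)=0$ for each prelimit. The core issue is the local martingale property of $M^{\mu_\omega}_f$ under $Q_\omega$ in Definition~\ref{DefSolution}, which I would attack through a suitable family of functionals on $\prbms[2]{\mathcal{Z}}$.

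For $f$ a monomial of degree one or two on $\mathbb{R}^d\times\mathbb{R}^{d_1}$, a pair $s<t$ of rationals in $[0,T]$, and a bounded continuous $\mathcal{G}_s$-measurable $h\!: \mathcal{Z}\to\mathbb{R}$, set
\[
	\Psi_{f,s,t,h}(\Theta) \doteq \int_{\mathcal{Z}} \bigl(M_f^{\mu_\Theta}(t) - M_f^{\mu_\Theta}(s)\bigr)\, h\, d\Theta,\qquad \Theta\in \prbms[2]{\mathcal{Z}},
\]
where $\mu_\Theta(t)\doteq \Theta\circ \hat{X}(t)^{-1}$. The goal is to show $\Psi_{f,s,t,h}(Q)=0$ $\Prb$-a.s.\ for each such datum, and then take a countable intersection over a separating family. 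On the prelimit side, It{\^o}'s formula applied to $f(X^{n}_{i},W^{n}_{i})$ on $[s,t]$ represents the increment $M^{\mu^n}_f(t,X^n_i,\rho^{n,i},W^n_i) - M^{\mu^n}_f(s,X^n_i,\rho^{n,i},W^n_i)$ as a stochastic integral increment $N^i_n(t)-N^i_n(s)$ driven by $W^n_i$, whence
\[
	\Psi_{f,s,t,h}(Q^n) = \frac{1}{n}\sum_{i=1}^{n} h\bigl(X^n_i,\rho^{n,i},W^n_i\bigr)\bigl(N^i_n(t)-N^i_n(s)\bigr).
\]
Each factor $h(X^n_i,\rho^{n,i},W^n_i)$ is $\mathcal{F}^n_s$-measurable and the Wiener processes $W^n_1,\ldots,W^n_n$ are mutually orthogonal, so all cross-products vanish in conditional expectation; after a localization by $\tau^{i,n}_R\doteq \inf\{t\!:|X^n_i(t)|\geq R\}\wedge T$ (needed to handle the fourth-order terms that appear when $f$ has degree two and \eqref{CondTightness} only delivers $2+\delta_0$ moments), orthogonality yields $\Mean_n\bigl[|\Psi_{f,s,t,h}(Q^n)|^{2}\bigr]=O(1/n)$, so $\Psi_{f,s,t,h}(Q^n)\to 0$ in probability.

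On the other hand, assumptions \hypref{HypLipschitz} and \hypref{HypGrowth} on $b,\sigma$ in the measure variable, continuity of $\Theta\mapsto \mu_\Theta$ from $\prbms[2]{\mathcal{Z}}$ to $\mathcal{M}_2$, and the uniform $(2+\delta_0)$-moment bound built into $g$ combine to make $\Psi_{f,s,t,h}$ continuous on the support of $\mathrm{Law}(Q^n)$. The continuous mapping theorem then gives $\Psi_{f,s,t,h}(Q^n)\Rightarrow \Psi_{f,s,t,h}(Q)$, and together with convergence in probability to zero this forces $\Psi_{f,s,t,h}(Q)=0$ $\Prb$-almost surely. Letting $R\to\infty$ in the localization and intersecting the countably many null sets one obtains part~(i).

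For the cost inequality, set
\[
	\tilde{\Phi}(\Theta) \doteq \int_{\mathcal{Z}}\!\left[\int_{\Gamma\times[0,T]}\! f(s,\phi(s),\mu_\Theta(s),\gamma)\, r(d\gamma,ds) + F(\phi(T),\mu_\Theta(T))\right]\Theta(d\phi,dr,dw),
\]
so that $\tfrac{1}{n}\sum_i J^n_i(\boldsymbol{u}^n) = \Mean_n[\tilde{\Phi}(Q^n)]$. Nonnegativity of $f,F$, joint continuity of $f,F$ in all variables, continuity of $\Theta\mapsto \mu_\Theta$, and Portmanteau for nonnegative lower semicontinuous integrands make $\tilde{\Phi}$ lower semicontinuous on $\prbms[2]{\mathcal{Z}}$. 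Realizing $Q^n\to Q$ almost surely via Skorokhod representation and applying Fatou's lemma yields $\liminf_n \Mean_n[\tilde{\Phi}(Q^n)]\geq \Mean[\tilde{\Phi}(Q)]$, and part~(i) guarantees $\tilde{\Phi}(Q_\omega)=\hat{J}(\mu_\omega(0),Q_\omega;\mu_\omega)$ $\Prb$-a.s., which delivers the claim. The main obstacle is the regularity of the two functionals $\Psi_{f,s,t,h}$ and $\tilde{\Phi}$ on $\prbms[2]{\mathcal{Z}}$: in both cases the integrand itself depends on $\Theta$ through $\mu_\Theta$, so one cannot invoke weak convergence against a fixed integrand and must combine the Lipschitz-in-measure regularity of the coefficients with the uniform $(2+\delta_0)$-moment control supplied by condition~\eqref{CondTightness} to secure the uniform integrability that makes both the continuous mapping step for part~(i) and the lower semicontinuity step for part~(ii) work.
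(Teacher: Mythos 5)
Your overall plan tracks the paper's: express $\tfrac{1}{n}\sum_i J^n_i(\boldsymbol{u}^n)$ and the martingale-test quantity as functionals of the occupation measure $Q^n$, kill the martingale-test functional in the prelimit via orthogonality of the Wiener processes, push through the limit by the continuous mapping theorem, and handle the costs by a Fatou argument. The decomposition into the McKean--Vlasov identification and the $\liminf$ bound, and the use of a countable separating family of test data, are exactly as in the paper. However, there is a substantive gap in the localization step that you pass over too quickly.

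You localize by the hitting time $\tau^{i,n}_R = \inf\{t: |X^n_i(t)|\geq R\}\wedge T$ ``to handle the fourth-order terms'', but then assert without argument that the (now localized) functional $\Psi_{f,s,t,h}$ is continuous on $\prbms[2]{\mathcal{Z}}$. This does not hold. First, once $\tau^{i,n}_R$ is transported to the canonical space as $\tau_R(\phi,r,w)=\inf\{t:|\phi(t)|\geq R\}\wedge T$, the map $(\phi,r,w)\mapsto\tau_R(\phi,r,w)$ is in general \emph{discontinuous} on $\mathcal{X}\times\mathcal{R}_2\times\mathcal{W}$ (hitting times are discontinuous when the path touches the threshold without crossing), and the paper explicitly allows $\sigma$ to be degenerate, so one cannot wave this away as a null event. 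Second, your stopping time only controls $|\phi|$; the generator $\mathcal{A}^{\mathfrak{m}}_{\gamma,s}(f)$ for degree-two monomials involves $b(\cdot,\cdot,\cdot,\gamma)$, which grows linearly in $\gamma$, and also the coordinate $w$ via $\partial f/\partial y_l$, so without simultaneously truncating $\int|\gamma|^2\,r(d\gamma,dt)$ and $\sup_t|w(t)|$, the integrand in $M_f$ remains unbounded after stopping and your uniform-integrability / isometry estimates do not go through. Both problems are precisely the reason the paper introduces \emph{randomized} stopping times on the extended space $\hat{\mathcal{Z}}=\mathcal{Z}\times[0,1]$: the threshold $v((\phi,r,w),t)=\int_{\Gamma\times[0,t]}|\gamma|^2\,r + \sup_{s\le t}|\phi(s)| + \sup_{s\le t}|w(s)|$ bounds all three contributions at once, and the uniform randomization variable $a$ makes $\tau_k(\cdot,a)$ continuous $\Theta\otimes\lambda$-almost surely for \emph{any} $\Theta$, which is what justifies the continuous-mapping step even with degenerate $\sigma$. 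Without something playing this role, the step ``$\Psi_{f,s,t,h}(Q^n)\Rightarrow\Psi_{f,s,t,h}(Q)$'' is not justified, and this is the heart of part (i). The Fatou step for the costs is fine as you describe it.
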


\begin{proof}
By Lemma~\ref{LemmaTightness}, $(\Prb_{N}\circ (Q^{N})^{-1})_{N\in\mathbb{N}}$ is pre-compact in $\prbms{\prbms[2]{\mathcal{Z}}}$. Let $(Q^{n})_{n\in\mathbb{I}}$ be a subsequence that converges in distribution to some $\prbms[2]{\mathcal{Z}}$-valued random variable $Q$, defined on some probability space $(\Omega,\mathcal{F},\Prb)$. Set $\mu_{\omega}(t)\doteq Q_{\omega}\circ \hat{X}(t)^{-1}$, $t\in [0,T]$, $\omega\in \Omega$. Since $Q_{\omega}\in \prbms[2]{\mathcal{Z}}$ for every $\omega\in \Omega$, we have $\mu_{\omega}\in \mathcal{M}_{2}$ for every $\omega\in \Omega$; cf.\ Remark~\ref{RemMcKeanVlasov} above. By construction, $\hat{W}(0) = 0$ $Q^{n}_{\omega}$-almost surely for $\Prb_{n}$-almost every $\omega\in \Omega_{n}$. Convergence in distribution implies $\hat{W}(0) = 0$ $Q_{\omega}$-almost surely for $\Prb$-almost every $\omega\in \Omega$.

In order to verify that $Q_{\omega}$ is a solution of Eq.~\eqref{EqLimitDynamicsRel} with flow of measures $\mu_{\omega}$ for $\Prb$-almost every $\omega\in \Omega$, it suffices to check that condition~\eqref{DefSolutionMart} of Definition~\ref{DefSolution} holds. The proof of this fact is analogous to the proof of Lemma~5.2 in \citet{budhirajaetal12}. Since the situation here is somewhat different, we give details in Appendix~\ref{AppLimitPoints} below.

The asymptotic lower bound for the average costs is a consequence of a version of Fatou's lemma \citep[cf.\ Theorem~A.3.12][p.\,307]{dupuisellis97} since, for every $n\in \mathbb{I}$,
\[
\begin{split}
\frac{1}{n} \sum_{i=1}^{n} J^{n}_{i}(\boldsymbol{u}^{n}) = \int_{\Omega_{n}} \int_{\mathcal{Z}} \left( \int_{\Gamma\times [0,T]} f\bigl(t,\phi(t),Q^{n}_{\omega}\circ \hat{X}(t)^{-1},\gamma\bigr)\, r(d\gamma,dt)\right. \\
 \left. F\bigl(T,\phi(T),Q^{n}_{\omega}\circ \hat{X}(T)^{-1}\bigr) \right) Q^{n}_{\omega}(d\phi,dr,dw)\, \Prb_{n}(d\omega)
\end{split}
\]
and $Q^{n}_{\omega}\circ \hat{X}(t)^{-1} \to \mu(t)$ in distribution as $n\to \infty$.
\end{proof}

\begin{rem}
Lemma~\ref{LemmaConvergence} shows that, under condition~\eqref{CondTightness}, all limit points of the normalized occupation measures $(Q^{N})_{N\in\mathbb{N}}$ are concentrated on those random variables that, with probability one, take values in the set of McKean-Vlasov solutions of Eq.~\eqref{EqLimitDynamicsRel}. The mean field condition of Definition~\ref{DefMFGSol} is therefore always satisfied.
\end{rem}

In addition to \eqref{CondTightness}, we will need the following weak symmetry condition on the costs:
\begin{align} \label{CondCostSymmetry}
\tag{S}
\begin{split}
	&\text{$\exists$ a sequence of indices }(i_{\ast}^{N})_{N\in\mathbb{N}} \text{ with } i_{\ast}^{N}\in \{1,\ldots,N\} \text{ such that}\\
	& \sup_{N\in \mathbb{N}} J^{N}_{i_{\ast}^{N}}(\boldsymbol{u}^{N}) < \infty \text{ and }  \limsup_{N\to\infty} \frac{1}{N}\sum_{i=1}^{N} J^{N}_{i}(\boldsymbol{u}^{N}) \leq \limsup_{N\to\infty} J^{N}_{i_{\ast}^{N}}(\boldsymbol{u}^{N}).
\end{split}
\end{align}

\begin{rem} \label{RemCostSymmetry}
Condition~\eqref{CondCostSymmetry} is automatically satisfied if the cost coefficients $f$, $F$ are bounded functions. If $f$, $F$ are unbounded and the costs associated with $\boldsymbol{u}^{N}$ are symmetric in the sense that, for every $N$, every $i\in \{2,\ldots,N\}$, $J^{N}_{1}(\boldsymbol{u}^{N}) = J^{N}_{i}(\boldsymbol{u}^{N})$, then, thanks to assumption~\hypref{HypCostGrowth} and Lemma~\ref{LemmaGrowthBounds}, condition~\eqref{CondCostSymmetry} follows from condition~\eqref{CondTightness}.
\end{rem}

\begin{thrm} \label{ThConnection}
Let $(\epsilon_{N})_{N\in\mathbb{N}} \subset [0,\infty)$ be a sequence converging to zero. Suppose that $(\boldsymbol{\xi}^{N})_{N\in\mathbb{N}}$ and $(\boldsymbol{u}^{N})_{N\in\mathbb{N}}$ are such that \eqref{CondTightness} and \eqref{CondCostSymmetry} hold and, for each $N\in \mathbb{N}$, $\boldsymbol{\xi}^{N} = (\xi^{N}_{1},\ldots,\xi^{N}_{N})$ is exchangeable and $\boldsymbol{u}^{N}$ is a local $\epsilon_{N}$-Nash equilibrium for the $N$-player game. Let $(Q^{n})_{n\in\mathbb{I}}$ be a subsequence that converges in distribution to some $\prbms[2]{\mathcal{Z}}$-valued random variable $Q$ defined on some probability space $(\Omega,\mathcal{F},\Prb)$. If there is $\mathfrak{m} \in \mathcal{M}_{2}$ such that, for $\Prb$-almost every $\omega\in \Omega$,
\[
	Q_{\omega}\circ \hat{X}(t)^{-1} = \mathfrak{m}(t),\quad t\in [0,T],
\]
then $(Q_{\omega},\mathfrak{m})$ is a solution of the mean field game for $\Prb$-almost every $\omega\in \Omega$.
\end{thrm}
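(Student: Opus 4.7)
The plan is to verify the three defining properties of a mean field game solution (Definition~\ref{DefMFGSol}) for the pair $(Q_{\omega},\mathfrak{m})$. Conditions (i) and (ii) follow essentially for free: by Lemma~\ref{LemmaConvergence} we know that, for $\Prb$-almost every $\omega$, $Q_\omega$ is a solution of Eq.~\eqref{EqLimitDynamicsRel} with flow of measures $\mu_\omega$, where $\mu_\omega(t) = Q_\omega\circ \hat{X}(t)^{-1}$; the hypothesis $\mu_\omega \equiv \mathfrak{m}$ a.s.\ then supplies both the mean field condition and the fact that $Q_\omega$ is a solution with respect to the \emph{deterministic} flow $\mathfrak{m}$. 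All the work is therefore in establishing the optimality condition.

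The strategy for optimality is a one-player perturbation argument. Fix $\epsilon > 0$ and use Lemma~\ref{LemmaNoiseFeedback} to produce a noise-feedback function $\psi^{\mathfrak{m}}_{\epsilon}$ with associated measure $\Theta^{\mathfrak{m}}_{\epsilon}$ satisfying $\hat{J}(\mathfrak{m}(0),\Theta^{\mathfrak{m}}_{\epsilon};\mathfrak{m}) \leq \hat{V}(\mathfrak{m}(0);\mathfrak{m}) + \epsilon$. For each $n\in \mathbb{I}$, pick the index $i^{n}_{\ast}$ supplied by \eqref{CondCostSymmetry}; exchangeability of $\boldsymbol{\xi}^n$ and weak convergence of $Q^n$ imply $\Prb_n\circ (\xi^{n}_{i^{n}_{\ast}})^{-1}\to \mathfrak{m}(0)$ in $\prbms[2]{\mathbb{R}^d}$, so Lemma~\ref{LemmaCoupling} yields $\bar{\xi}^{n}$, measurable with respect to $\sigma(\xi^{n}_{i^{n}_{\ast}},\vartheta^{n}_{i^{n}_{\ast}}) \subset \mathcal{F}^{n,i^{n}_{\ast}}_{0}$, with law $\mathfrak{m}(0)$ and $\Mean_n[|\xi^{n}_{i^{n}_{\ast}}-\bar{\xi}^n|^2]\to 0$. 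Define the competitor strategy
\[
v^{n}(t,\omega) \doteq \psi^{\mathfrak{m}}_{\epsilon}\bigl(t,\bar{\xi}^{n}(\omega),W^{n}_{i^{n}_{\ast}}(\cdot,\omega)\bigr),
\]
which lies in $\mathcal{H}_{2}((\mathcal{F}^{n,i^{n}_{\ast}}_{t}),\Prb_{n};\Gamma)$ since $\psi^{\mathfrak{m}}_{\epsilon}$ is progressively measurable and finite-valued. The local $\epsilon_n$-Nash property gives
\[
J^{n}_{i^{n}_{\ast}}(\boldsymbol{u}^{n}) \leq J^{n}_{i^{n}_{\ast}}\bigl([\boldsymbol{u}^{n,-i^{n}_{\ast}},v^{n}]\bigr) + \epsilon_{n}.
\]

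The crux is to pass to the limit in both sides. For the right-hand side, let $(\tilde{X}^{n}_{1},\ldots,\tilde{X}^{n}_{n})$ denote the state processes under $[\boldsymbol{u}^{n,-i^{n}_{\ast}},v^{n}]$ and $\tilde{\mu}^{n}$ the corresponding empirical measure. Because only one player has been perturbed, a Gronwall-type estimate based on \hypref{HypLipschitz}, \eqref{EqEmpWasserstein}, and Lemma~\ref{LemmaGrowthBounds2} shows that $\Mean_n[\|\tilde X^{n}_j - X^{n}_j\|_\mathcal{X}^2]=O(1/n)$ for $j\ne i^{n}_{\ast}$ and that $\Mean_n[\sup_t \mathrm{d}_2(\tilde{\mu}^n(t),\mu^n(t))^2]\to 0$; hence $\tilde{\mu}^n\Rightarrow \mathfrak{m}$ in $\mathcal{M}_2$. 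The perturbed player's state then behaves (in the limit) like the solution of Eq.~\eqref{EqLimitDynamicsRel} with frozen flow $\mathfrak{m}$, initial law $\mathfrak{m}(0)$ and control $\psi^{\mathfrak{m}}_{\epsilon}$ — this is the unique object $\Theta^{\mathfrak{m}}_{\epsilon}$ from Lemma~\ref{LemmaNoiseFeedback}. Combining the above with the local-Lipschitz / sub-quadratic growth bounds \hypref{HypCostLip}, \hypref{HypCostGrowth} and Lemma~\ref{LemmaGrowthBounds} (which supply uniform integrability of the cost integrands) yields $J^{n}_{i^{n}_{\ast}}([\boldsymbol{u}^{n,-i^{n}_{\ast}},v^{n}]) \to \hat{J}(\mathfrak{m}(0),\Theta^{\mathfrak{m}}_{\epsilon};\mathfrak{m})$. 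For the left-hand side, condition~\eqref{CondCostSymmetry} combined with the asymptotic lower bound from Lemma~\ref{LemmaConvergence} gives
\[
\limsup_{n\to\infty} J^{n}_{i^{n}_{\ast}}(\boldsymbol{u}^{n}) \geq \liminf_{n\to\infty}\frac{1}{n}\sum_{i=1}^{n} J^{n}_{i}(\boldsymbol{u}^{n}) \geq \int_{\Omega} \hat{J}\bigl(\mathfrak{m}(0),Q_{\omega};\mathfrak{m}\bigr)\Prb(d\omega).
\]
Chaining everything together,
\[
\int_{\Omega} \hat{J}\bigl(\mathfrak{m}(0),Q_{\omega};\mathfrak{m}\bigr)\Prb(d\omega) \leq \hat{J}(\mathfrak{m}(0),\Theta^{\mathfrak{m}}_{\epsilon};\mathfrak{m}) \leq \hat{V}(\mathfrak{m}(0);\mathfrak{m})+\epsilon.
\]
Letting $\epsilon\downarrow 0$ and noting that $\hat{J}(\mathfrak{m}(0),Q_{\omega};\mathfrak{m}) \geq \hat{V}(\mathfrak{m}(0);\mathfrak{m})$ pointwise, we conclude that equality holds $\Prb$-a.s., establishing the optimality condition.

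The main technical obstacle is the convergence $J^{n}_{i^{n}_{\ast}}([\boldsymbol{u}^{n,-i^{n}_{\ast}},v^{n}]) \to \hat{J}(\mathfrak{m}(0),\Theta^{\mathfrak{m}}_{\epsilon};\mathfrak{m})$: one must simultaneously control the vanishing effect of the perturbation of one player on the empirical measure, couple the initial condition via $\bar{\xi}^n$, and upgrade weak convergence to convergence of costs in the presence of only sub-quadratic (not bounded) running and terminal costs. This is where the stability estimates from Lemmas~\ref{LemmaGrowthBounds}, \ref{LemmaGrowthBounds2}, the martingale-problem characterization of Lemma~\ref{LemmaMPCharacterization}, and the uniqueness statement of Lemma~\ref{LemmaMcKeanVlasovUnique} all come into play to identify the limit.
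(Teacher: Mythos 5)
Your proposal is correct and follows essentially the same route as the paper: verify (i)--(ii) via Lemma~\ref{LemmaConvergence} plus the hypothesis that the induced flow is deterministic, then establish optimality by deviating the single player $i^n_\ast$ to the noise-feedback strategy $\psi^{\mathfrak{m}}_\epsilon$ with initial condition coupled via Lemma~\ref{LemmaCoupling}, controlling the perturbation's effect with Gronwall estimates, and closing with the local $\epsilon_n$-Nash property, condition~\eqref{CondCostSymmetry}, and the Fatou lower bound from Lemma~\ref{LemmaConvergence}. One minor inaccuracy: the limit identification in step~4 uses uniqueness in law for Eq.~\eqref{EqLimitDynamicsRel} with the \emph{fixed} flow $\mathfrak{m}$ (by comparing $\tilde X^n_{i^n_\ast}$ to an auxiliary process $\bar X^n_1$ driven by $\mathfrak{m}$ rather than $\tilde\mu^n$), not the McKean--Vlasov uniqueness of Lemma~\ref{LemmaMcKeanVlasovUnique}, which the paper reserves for Corollary~\ref{CorConnection}.
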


We postpone the proof of Theorem~\ref{ThConnection} to the end of this section. The crucial hypothesis in Theorem~\ref{ThConnection} is the almost sure non-randomness of the flow of measures induced by a limit random variable $Q$. Thus, under the rather general conditions \eqref{CondTightness} and \eqref{CondCostSymmetry}, we prove convergence to solutions of a mean field game for subsequences with limit random variable $Q$ such that $\Prb\circ (Q\circ (\hat{X}(t))^{-1}_{t\in[0,T]})^{-1} = \delta_{\mathfrak{m}}$ for some $\mathfrak{m}\in \mathcal{M}_{2}$. This condition is reminiscent of the characterization of propagation of chaos in the Tanaka-Sznitman theorem. The non-randomness of the induced flow of measures is implied by the non-randomness of the joint law of initial condition, relaxed control and noise process, that is, by the condition $\Prb\circ (Q\circ (\hat{X}(0),\hat{\rho},\hat{W})^{-1})^{-1} = \delta_{\nu}$ for some $\nu \in \prbms{\mathbb{R}^{d}\times \mathcal{R}_{2}\times \mathcal{W}}$. This condition, in turn, is satisfied if the initial states and individual strategies of each $N$-player game are independent and identically distributed, where the marginal distributions are allowed to vary with $N$.

\begin{crll} \label{CorConnection}
Let $(\epsilon_{N})_{N\in\mathbb{N}} \subset [0,\infty)$ be a sequence converging to zero. Suppose that $(\boldsymbol{\xi}^{N})_{N\in\mathbb{N}}$ and $(\boldsymbol{u}^{N})_{N\in\mathbb{N}}$ are such that \eqref{CondTightness} holds and, for each $N\in \mathbb{N}$, $\boldsymbol{u}^{N}$ is a local $\epsilon_{N}$-Nash equilibrium for the $N$-player game and the random variables $(\xi^{N}_{1},u^{N}_{1},W^{N}_{1}),\ldots,(\xi^{N}_{N},u^{N}_{N},W^{N}_{N})$ are independent and identically distributed. Let $(Q^{n})_{n\in\mathbb{I}}$ be a subsequence that converges in distribution to some $\prbms[2]{\mathcal{Z}}$-valued random variable $Q$ defined on some probability space $(\Omega,\mathcal{F},\Prb)$. Then $Q_{\omega}$ is a solution of the mean field game for $\Prb$-almost every $\omega\in \Omega$.
\end{crll}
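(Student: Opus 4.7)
The plan is to deduce Corollary~\ref{CorConnection} from Theorem~\ref{ThConnection} by checking its three hypotheses: exchangeability of $\boldsymbol{\xi}^{N}$, condition~\eqref{CondCostSymmetry}, and the a.s.\ non-randomness of the flow of measures induced by $Q$. Exchangeability is immediate from the i.i.d.\ assumption. For~\eqref{CondCostSymmetry}, I would observe that, since the triples $(\xi^{N}_{j},u^{N}_{j},W^{N}_{j})_{j}$ are i.i.d.\ and $b$, $\sigma$ depend on the other players only through the symmetric functional $\mu^{N}$, exchanging players $1$ and $i$ in~\eqref{EqPrelimitDynamics} is measure-preserving on the stochastic basis, so the joint law of $(X^{N}_{i},u^{N}_{i},\mu^{N})$ does not depend on $i$ and therefore $J^{N}_{i}(\boldsymbol{u}^{N})=J^{N}_{1}(\boldsymbol{u}^{N})$ for every $i$. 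Remark~\ref{RemCostSymmetry} then derives~\eqref{CondCostSymmetry} from~\eqref{CondTightness}.

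The heart of the argument is to establish the deterministic character of the flow of measures. My plan is to show first that the marginal $\tilde{Q}\doteq Q\circ (\hat{X}(0),\hat{\rho},\hat{W})^{-1}$ is $\Prb$-a.s.\ equal to some fixed $\bar{\nu}\in\prbms{\mathbb{R}^{d}\times \mathcal{R}_{2}\times \mathcal{W}}$, and then to lift this to $Q_{\omega}$ itself via pathwise uniqueness of McKean-Vlasov solutions. For the first part, note that $\tilde{Q}^{n}\doteq Q^{n}\circ (\hat{X}(0),\hat{\rho},\hat{W})^{-1}=\frac{1}{n}\sum_{i=1}^{n}\delta_{(\xi^{n}_{i},\rho^{n,i},W^{n}_{i})}$ is the empirical measure of i.i.d.\ triples with common law $\nu_{n}\doteq \Prb_{n}\circ (\xi^{n}_{1},\rho^{n,1},W^{n}_{1})^{-1}$, and continuity of the push-forward from $\prbms[2]{\mathcal{Z}}$ to $\prbms{\mathbb{R}^{d}\times \mathcal{R}_{2}\times \mathcal{W}}$ gives $\tilde{Q}^{n}\to\tilde{Q}$ in distribution along $\mathbb{I}$. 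Tightness of $(\nu_{n})$, supplied by~\eqref{CondTightness}, allows a further extraction, still called $\mathbb{I}$, along which $\nu_{n}\to\bar{\nu}$ weakly. For any bounded continuous test function $\phi$ the independence yields $\Mean_{n}[\int \phi\,d\tilde{Q}^{n}]=\int\phi\,d\nu_{n}\to\int\phi\,d\bar{\nu}$ together with $\mathrm{Var}_{n}(\int\phi\,d\tilde{Q}^{n})\leq \|\phi\|_{\infty}^{2}/n\to 0$, so $\int\phi\,d\tilde{Q}^{n}\to\int\phi\,d\bar{\nu}$ in probability; by separability of $\prbms{\mathbb{R}^{d}\times \mathcal{R}_{2}\times \mathcal{W}}$ this upgrades to $\tilde{Q}^{n}\to\bar{\nu}$ in probability, whence $\tilde{Q}=\bar{\nu}$ $\Prb$-almost surely.

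For the lifting step, Lemma~\ref{LemmaConvergence} tells me that $Q_{\omega}\in\prbms[2]{\mathcal{Z}}$ is a.s.\ a McKean-Vlasov solution of~\eqref{EqLimitDynamicsRel}, while the previous paragraph gives $Q_{\omega}\circ(\hat{X}(0),\hat{\rho},\hat{W})^{-1}=\bar{\nu}$ a.s.; Lemma~\ref{LemmaMcKeanVlasovUnique} then forces $Q_{\omega}=\bar{\Theta}$ a.s.\ for a single $\bar{\Theta}\in\prbms[2]{\mathcal{Z}}$. Setting $\mathfrak{m}(t)\doteq \bar{\Theta}\circ \hat{X}(t)^{-1}$, Remark~\ref{RemMcKeanVlasov} places $\mathfrak{m}$ in $\mathcal{M}_{2}$ and the McKean-Vlasov property gives $Q_{\omega}\circ\hat{X}(t)^{-1}=\mathfrak{m}(t)$ a.s. All hypotheses of Theorem~\ref{ThConnection} are thus in place and its conclusion is exactly the statement of the corollary. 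The delicate step is the LLN for the empirical measure $\tilde{Q}^{n}$ of i.i.d.\ triples whose common distribution $\nu_{n}$ drifts with $n$; once this is combined with McKean-Vlasov uniqueness, the rest is bookkeeping.
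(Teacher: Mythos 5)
Your proposal is correct and follows essentially the same route as the paper: verify that the i.i.d.\ structure gives exchangeability and (via Remark~\ref{RemCostSymmetry}) condition~\eqref{CondCostSymmetry}, use a law-of-large-numbers/variance-to-zero argument to show that the marginal of $Q_{\omega}$ on $\mathbb{R}^{d}\times\mathcal{R}_{2}\times\mathcal{W}$ is $\Prb$-a.s.\ deterministic, and then invoke Lemma~\ref{LemmaConvergence} together with the McKean-Vlasov uniqueness of Lemma~\ref{LemmaMcKeanVlasovUnique} to upgrade this to $Q_{\omega}$ being a.s.\ a single measure $\Theta$, so that Theorem~\ref{ThConnection} applies. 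The only (minor and inconsequential) difference is in the de-randomization step: you first extract a further subsequence of the common laws $\nu_{n}$ that converges weakly to some $\bar{\nu}$ and then show $\tilde{Q}^{n}\to\bar{\nu}$ in probability, whereas the paper avoids this extraction entirely by noting that $\Theta\mapsto\int\Psi\,d\Theta$ is bounded and continuous for each $\Psi$ in a countable measure-determining family, so the continuous mapping theorem transports the prelimit variance $\tfrac{1}{n}\mathrm{Var}(\Psi(\xi^{n}_{1},\rho^{n,1},W^{n}_{1}))\to 0$ directly to $\mathrm{Var}_{\Prb}(\Mean_{Q}[\Psi])=0$ without ever naming the limit $\bar{\nu}$. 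Your detour is logically sound, but the paper's version is marginally cleaner since it never requires identifying a convergent subsequence of $(\nu_{n})$.
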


\begin{proof}
By distributional symmetry of the vectors of initial states and individual strategies, the costs are symmetric and condition~\eqref{CondTightness} entails condition~\eqref{CondCostSymmetry}; cf.\ Remark~\ref{RemCostSymmetry} above.

Let $\mathcal{T} \subset \mathbf{C}_{b}(\mathbb{R}^{d}\times \mathcal{R}_{2}\times \mathcal{W})$ be a countable and measure determining set of functions. %\citep[for instance, Theorem~II.6.6 in][p.\,47]{parthasarathy67}.
Let $(Q^{n})_{n\in\mathbb{I}}$ be a convergent subsequence with limit random variable $Q$ on $(\Omega,\mathcal{F},\Prb)$. Let $\Psi\in \mathcal{T}$, and set
\begin{align*}
	m_{\Psi}&\doteq \Mean_{\Prb}\left[ \Mean_{Q}\left[ \Psi\bigl(\hat{X}(0),\hat{\rho},\hat{W}\bigr) \right] \right], &&\\
	v_{\Psi}&\doteq \Mean_{\Prb}\left[ \left(\Mean_{Q}\left[ \Psi\bigl(\hat{X}(0),\hat{\rho},\hat{W}\bigr) \right] - m_{\Psi} \right)^{2}\right], &&\\
	m^{n}_{\Psi}&\doteq \Mean_{n}\left[ \Mean_{Q^{n}}\left[ \Psi\bigl(\hat{X}(0),\hat{\rho},\hat{W}\bigr) \right] \right], &n\in \mathbb{I}.&
\end{align*}
The mapping $\Theta \mapsto \int \Psi d\Theta$ is continuous on $\prbms[2]{\mathcal{Z}}$. By convergence of $(Q^{n})$ to $Q$ and the continuous mapping theorem,
\begin{align*}
	v_{\Psi} &= \lim_{n\to\infty} \Mean_{n}\left[ \left( \Mean_{Q^{n}}\left[ \Psi\bigl(\hat{X}(0),\hat{\rho},\hat{W}\bigr) \right] -  m^{n}_{\Psi} \right)^{2} \right] \\
	&= \lim_{n\to\infty} \Mean_{n}\left[\left( \frac{1}{n}\sum_{i=1}^{n} \Psi\bigl(\xi^{n}_{i},\rho^{n,i},W^{n}_{i}\bigr) - m^{n}_{\Psi} \right)^{2} \right],
\end{align*}
where $\rho^{n,i}$ is the relaxed control random variable induced by $u^{n}_{i}$. As a consequence of the i.i.d.\ hypothesis, the random variables $\Psi(\xi^{n}_{i},\rho^{n,i},W^{n}_{i})$, $i\in \{1,\ldots,n\}$, are independent and identically distributed with common mean equal to $m^{n}_{\Psi}$. Since $\Psi$ is bounded, it follows that $v_{\Psi} = 0$. This implies
\[
	\Mean_{Q}\left[ \Psi\bigl(\hat{X}(0),\hat{\rho},\hat{W}\bigr) \right] = m_{\Psi}\quad \Prb\text{-almost surely}.	
\]
Since $\mathcal{T}$ is countable, we have with $\Prb$-probability one
\[
	\Mean_{Q}\left[ \Psi\bigl(\hat{X}(0),\hat{\rho},\hat{W}\bigr) \right] = m_{\Psi}\quad \text{for all } \Psi\in \mathcal{T}.	
\]
Since $\mathcal{T}$ is also measure determining, it follows that there exists a measure $\nu\in \prbms{\mathbb{R}^{d}\times \mathcal{R}_{2}\times \mathcal{W}}$ such that, for $\Prb$-almost every $\omega\in \Omega$,
\[
	Q_{\omega}\circ \bigl(\hat{X}(0),\hat{\rho},\hat{W}\bigr)^{-1} = \nu.
\]
On the other hand, we know by Lemma~\ref{LemmaConvergence} that $Q_{\omega}\in \prbms[2]{\mathcal{Z}}$ is a McKean-Vlasov solution of Eq.~\eqref{EqLimitDynamicsRel} for $\Prb$-almost every $\omega\in \Omega$. Uniqueness of such solutions according to Lemma~\ref{LemmaMcKeanVlasovUnique} yields the existence of a measure $\Theta \in \prbms[2]{\mathcal{Z}}$ such that $Q_{\omega} = \Theta$ for $\Prb$-almost every $\omega\in \Omega$. Let $\mathfrak{m}\in \mathcal{M}_{2}$ be the flow of measures induced by $\Theta$. Then, for $\Prb$-almost every $\omega\in \Omega$,
\[
	Q_{\omega}\circ \hat{X}(t)^{-1} = \mathfrak{m}(t),\quad t\in [0,T].
\]
The assertion is now a consequence of Theorem~\ref{ThConnection}.
\end{proof}

Existence of local approximate Nash equilibria as required in Corollary~\ref{CorConnection} is guaranteed, in particular, under the hypotheses of Proposition~\ref{PropNashExistence} above (compact action space, bounded coefficients). Suppose that $(\boldsymbol{\xi}^{N})$ is such that, for each $N\in \mathbb{N}$, $\boldsymbol{\xi}^{N}$ is a vector of independent and identically distributed random variables with common marginal $\mathfrak{m}^{N}_{0}\in \prbms[2]{\mathbb{R}^{d}}$ and that, for some $\delta_{0} > 0$, $\sup_{N\in \mathbb{N}} \int |x|^{2+\delta_{0}} \mathfrak{m}^{N}_{0}(dx) < \infty$. Then, by Proposition~\ref{PropNashExistence}, there exists a corresponding sequence $(\boldsymbol{u}^{N})$ of local approximate Nash equilibria such that the hypotheses of Corollary~\ref{CorConnection} are satisfied. In addition to the desired limit relation, we thus obtain a proof of existence of solutions for the mean field game. Note that existence of solutions is just a by-product of our analysis; analogous existence results can in fact be obtained by directly working with the mean field game; see \citet{lacker15}. The proof there is based, as in Proposition~\ref{PropNashExistence} here, on relaxed controls and a version of Fan's fixed point theorem.

\begin{proof}[Proof of Theorem~\ref{ThConnection}]
By hypothesis, $Q\circ \hat{X}(\cdot)^{-1} = \mathfrak{m}(\cdot)$ $\Prb$-almost surely for some deterministic $\mathfrak{m} \in \mathcal{M}_{2}$. In view of Lemma~\ref{LemmaConvergence}, it is enough to show that the pair $(Q_{\omega},\mathfrak{m})$ satisfies the optimality condition of Definition~\ref{DefMFGSol} with $\Prb$-probability one. This is equivalent to showing that $\hat{J}(\mathfrak{m}(0),Q_{\omega};\mathfrak{m}) = \hat{V}(\mathfrak{m}(0);\mathfrak{m})$ for $\Prb$-almost all $\omega\in \Omega$.

Let $\epsilon > 0$. Choose a function $\psi^{\mathfrak{m}}_{\epsilon}: [0,T]\times \mathbb{R}^{d}\times \mathcal{W} \rightarrow \Gamma$ and a probability measure $\Theta^{\mathfrak{m}}_{\epsilon}\in \prbms[2]{\mathcal{Z}}$ according to Lemma~\ref{LemmaNoiseFeedback}. Choose a sequence of indices $(i_{\ast}^{n})_{n\in\mathbb{I}}$ according to condition~\eqref{CondCostSymmetry}. We will, as we may, assume that $i_{\ast}^{n} = 1$ for every $n\in \mathbb{I}$; otherwise renumber the components of the $n$-player games.

The proof proceeds in five steps. First, we construct a coupling for the initial conditions. In the second step, based on that coupling and the feedback function $\psi^{\mathfrak{m}}_{\epsilon}$, we define a competitor strategy $\boldsymbol{\tilde{u}}^{n}$ that differs from $\boldsymbol{u}^{n}$ only in component one ($= i_{\ast}^{n}$). As verified in step three, the associated normalized occupation measures have the same limit $Q$ as the sequence $(Q^{n})$. This is used in the fourth step to show that $\limsup_{n\to\infty} J^{n}_{1}(\boldsymbol{\tilde{u}}^{n}) \leq \hat{V}(\mathfrak{m}(0);\mathfrak{m}) + \epsilon$. Thanks to this upper limit, the local approximate Nash equilibrium property of $\boldsymbol{u}^{n}$ together with condition~\eqref{CondCostSymmetry}, and the asymptotic lower bound on the average costs from Lemma~\ref{LemmaConvergence}, we establish optimality in the fifth and last step.

\textbf{First step}. By hypothesis, $(\Prb_{n}\circ (Q^{n})^{-1})_{n\in \mathbb{I}}$ converges to $\Prb\circ Q^{-1}$ in $\prbms{\prbms[2]{\mathcal{Z}}}$. By the choice of the metric on $\mathcal{Z}$, the continuity of the map $\mathcal{Z}\ni (\phi,r,w)\mapsto \phi(0)\in \mathbb{R}^{d}$, and the mapping theorem \citep[for instance, Theorem~5.1 in][p.\,30]{billingsley68}, we have that
\[
	\prbms[2]{\mathcal{Z}}\ni \Theta \mapsto \Theta\circ (\hat{X}(0))^{-1} \in \prbms[2]{\mathbb{R}^{d}}
\]
is continuous. This implies, again by the continuous mapping theorem, that
\[
	\Prb_{n}\circ \left(Q^{n}\circ(\hat{X}(0))^{-1}\right)^{-1} \stackrel{n\to\infty}{\longrightarrow} \Prb\circ \left(Q\circ(\hat{X}(0))^{-1}\right)^{-1} \text{ in } \prbms{\prbms[2]{\mathbb{R}^{d}}}.
\]
By construction and hypothesis, respectively,
\begin{align*}
	& Q^{n}\circ(\hat{X}(0))^{-1} = \frac{1}{n}\sum_{i=1}^{n} \delta_{\xi^{n}_{i}}, &\text{while}& & \Prb\circ \left(Q\circ(\hat{X}(0))^{-1}\right)^{-1} = \delta_{\mathfrak{m}(0)}.&
\end{align*}
It follows that $(\frac{1}{n}\sum_{i=1}^{n} \delta_{\xi^{n}_{i}})_{n\in \mathbb{I}}$ converges to $\mathfrak{m}(0)$ in distribution as $\prbms[2]{\mathbb{R}^{d}}$-valued random variables, where $\mathfrak{m}(0)$ is deterministic. This convergence implies, in particular, that
\[
	\Mean_{n}\left[\frac{1}{n}\sum_{i=1}^{n} |\xi^{n}_{i}|^{2} \right] \stackrel{n\to\infty}{\longrightarrow} \int_{\mathbb{R}^{d}} |x|^{2}\, \mathfrak{m}(0)(dx).
\]   
By hypothesis, $\boldsymbol{\xi}^{n} = (\xi^{n}_{1},\ldots,\xi^{n}_{n})$ is exchangeable for every $n\in \mathbb{I}$. Convergence of the associated empirical measures, by the Tanaka-Sznitman theorem \citep[for instance, Theorem~3.2 in][p.\,27]{gottlieb98}, implies that
\[
	\Prb_{n}\circ (\xi^{n}_{1})^{-1} \stackrel{n\to\infty}{\longrightarrow} \mathfrak{m}(0) \text{ in } \prbms{\mathbb{R}^{d}}.
\]
Actually, we have convergence in $\prbms[2]{\mathbb{R}^{d}}$ since, by exchangeability,
\[
	\Mean_{n}\left[|\xi^{n}_{1}|^{2}\right] = \Mean_{n}\left[\frac{1}{n}\sum_{i=1}^{n} |\xi^{n}_{i}|^{2} \right] \text{ for every }n\in \mathbb{I},
\]
and the expectations on the right-hand side above converge to the second moment of $\mathfrak{m}(0)$. We are therefore in the situation of Lemma~\ref{LemmaCoupling}, and we apply that result with the choice $i^{n}_{\ast} = 1$ to obtain a sequence $(\bar{\xi}^{n})_{n\in\mathbb{I}}$ of $\mathbb{R}^{d}$-valued random variables such that $\bar{\xi}^{n}$ is $\sigma(\xi^{n}_{i^{n}_{\ast}},\vartheta^{n}_{i^{n}_{\ast}})$-measurable, $\Prb_{n}\circ (\bar{\xi}^{n})^{-1} = \mathfrak{m}(0)$ and $\Mean_{n}\left[|\xi^{n}_{1} - \bar{\xi}^{n}|^{2}\right] \to 0$ as $n\to \infty$.

\textbf{Second step}. Define a strategy vector $\boldsymbol{\tilde{u}}^{n} = (\tilde{u}^{n}_{1},\ldots,\tilde{u}^{n}_{n})$ by setting, for $(t,\omega)\in [0,T]\times \Omega_{n}$,
\[
	\tilde{u}^{n}_{i}(t,\omega)\doteq \begin{cases}
	\psi^{\mathfrak{m}}_{\epsilon}\left(t,\bar{\xi}^{n}(\omega),W^{n}_{1}(\cdot,\omega)\right) &\text{if } i=1,\\
	u^{n}_{i}(t,\omega) &\text{if } i\in \{2,\ldots,n\}.
	\end{cases}
\]
Notice that $\boldsymbol{\tilde{u}}^{n}$ is indeed a strategy vector for the game with $n$ players. Moreover, $\tilde{u}^{n}_{i} = u^{n}_{i}$ for $i\in \{2,\ldots,n\}$, while $\tilde{u}^{n}_{1}\in \mathcal{H}_{2}((\mathcal{F}^{n,1}_{t}),\Prb_{n};\Gamma)$. Let $\tilde{\rho}^{n,i}$ be the relaxed control induced by $\tilde{u}^{n}_{i}$, $i\in \{1,\ldots,n\}$. Clearly, $\tilde{\rho}^{n,i} = \rho^{n,i}$ for $i\geq 2$. On the other hand, by construction and since $\bar{\xi}^{n}$ and $W^{n}_{1}$ are independent,
\[
	\Prb_{n}\circ \left(\bar{\xi}^{n},\tilde{\rho}^{n,1},W^{n}_{1}\right)^{-1} = \Theta^{\mathfrak{m}}_{\epsilon}\circ \bigl(\hat{X}(0),\hat{\rho},\hat{W}\bigr)^{-1} \quad\text{for every }n\in \mathbb{I}.
\]
The law of $\tilde{u}^{n}_{1}$, in particular, does not change with $n$. It follows that
\[
	\sup_{n\in\mathbb{I}} \Mean_{n}\left[\int_{0}^{T} |\tilde{u}^{n}_{1}(t)|^{2}\, dt \right] < \infty.
\]
The coercivity assumption \hypref{HypCostCoercivity} implies that there exists $C > 0$ such that for every $n\in \mathbb{I}$,
\[
	\Mean_{n}\left[ \int_{0}^{T} |u^{n}_{1}(t)|^{2}\,dt \right] \leq C\left(1 + J_{1}^{n}(\boldsymbol{u}^{n}) \right).
\]
By choice of the index $i_{\ast}^{n} = 1$ according to \eqref{CondCostSymmetry}, we have $\sup_{n\in \mathbb{N}} J_{1}^{n}(\boldsymbol{u}^{n}) < \infty$. Since $\Mean_{n}\left[|\xi^{n}_{1}|^{2} \right] = \frac{1}{n}\sum_{i=1}^{n} \Mean_{n}\left[|\xi^{n}_{i}|^{2} \right]$ by exchangeability, it follows that
\begin{equation} \label{EqConnectionEnergy}
	\sup_{n\in \mathbb{I}} \Mean_{n}\left[|\xi^{n}_{1}|^{2} + \int_{0}^{T} \left( |u^{n}_{1}(t)|^{2} + |\tilde{u}^{n}_{1}(t)|^{2}\right) dt \right] < \infty.
\end{equation}

\textbf{Third step}. Let $(\tilde{X}^{n}_{1},\ldots,\tilde{X}^{n}_{n})$ be the solution of the system of equations \eqref{EqPrelimitDynamics} under strategy vector $\boldsymbol{\tilde{u}}^{n}$, and let $\tilde{\mu}^{N}$ denote the empirical measure process associated with $(\tilde{X}^{n}_{1},\ldots,\tilde{X}^{n}_{n})$. Let $\tilde{Q}^{n}$ be the normalized occupation measure associated with $\boldsymbol{\tilde{u}}^{n}$, i.e., the $\prbms[2]{\mathcal{Z}}$-valued random variable determined by
\[
	\tilde{Q}^{n}_{\omega}(B\times R\times D) \doteq \frac{1}{n} \sum_{i=1}^{n} \delta_{\tilde{X}^{n}_{i}(\cdot,\omega)}(B)\cdot \delta_{\tilde{\rho}^{n,i}_{\omega}}(R)\cdot \delta_{W^{n}_{i}(\cdot,\omega)}(D),\quad \omega\in \Omega_{n},
\]
$B\in \Borel{\mathcal{X}}$, $R\in \Borel{\mathcal{R}_{2}}$, $D\in \Borel{\mathcal{W}}$. We are going to show that
\begin{equation} \label{EqConnectionOMConv}
	\tilde{Q}^{n} \stackrel{n\to\infty}{\longrightarrow} Q \text{ in distribution as $\prbms[2]{\mathcal{Z}}$-valued random variables.}
\end{equation}
Since $Q^{n}\to Q$ in distribution, it suffices to show that
\[
	\mathrm{d}_{\prbms{\prbms[2]{\mathcal{Z}}}}\left(\Prb_{n}\circ(\tilde{Q}^{n})^{-1},\Prb_{n}\circ (Q^{n})^{-1}\right) \stackrel{n\to\infty}{\longrightarrow} 0.
\]
Let $n\in \mathbb{I}$. By construction, definition of the bounded Lipschitz metric, inequality \eqref{EqEmpWasserstein}, and H{\"o}lder's inequality,
\begin{align*}
	&\mathrm{d}_{\prbms{\prbms[2]{\mathcal{Z}}}}\left(\Prb_{n}\circ(\tilde{Q}^{n})^{-1},\Prb_{n}\circ (Q^{n})^{-1}\right) \\
	&= \sup_{G \in \mathbf{C}(\prbms[2]{\mathcal{Z}}):\, \|G\|_{\mathrm{bLip}} \leq 1 } \Mean_{n}\left[ G\bigl(Q^{n}\bigr) - G\bigl(\tilde{Q}^{n}\bigr) \right] \\
	&\leq \Mean_{n}\left[ \mathrm{d}_{\prbms[2]{\mathcal{Z}}}\bigl(Q^{n},\tilde{Q}^{n}\bigr) \right] \\
	&\leq \sqrt{ \Mean_{n}\left[\frac{1}{n}\sum_{i=1}^{n} \mathrm{d}_{\mathcal{Z}}\left(\bigl(X^{n}_{i},\rho^{n,i},W^{n}_{i}\bigr), \bigl(\tilde{X}^{n}_{i},\tilde{\rho}^{n,i},W^{n}_{i}\bigr)\right)^{2} \right]} \\
	&\leq \frac{1}{\sqrt{n}} + \sqrt{ \Mean_{n}\left[\frac{1}{n}\sum_{i=1}^{n} \sup_{t\in [0,T]} \bigl|X^{n}_{i}(t) - \tilde{X}^{n}_{i}(t)\bigr|^{2} \right]},
\end{align*}
where the last inequality follows by definition of $\mathrm{d}_{\mathcal{Z}}$ and from the fact that $\rho^{n,i} = \tilde{\rho}^{n,i}$ for $i\in \{2,\ldots,n\}$. Using assumption~\hypref{HypLipschitz}, H{\"o}lder's inequality, Doob's maximal inequality, It{\^{o}}'s isometry, inequality \eqref{EqEmpWasserstein}, and Fubini's theorem, we find that for $i\in \{2,\ldots,n\}$, every $t\in [0,T]$,
\begin{align*}
	& \Mean_{n}\left[ \sup_{s\in [0,t]} \bigl|X^{n}_{i}(s) - \tilde{X}^{n}_{i}(s)\bigr|^{2} \right] \\
	&\leq 4(T+4)L^{2} \Mean_{n}\left[ \int_{0}^{t} \bigl|X^{n}_{i}(s) - \tilde{X}^{n}_{i}(s)\bigr|^{2}\,ds + \int_{0}^{t} \mathrm{d}_{2}\left(\mu^{N}(s),\tilde{\mu}^{N}(s)\right)^{2} ds \right] \\
	&\leq 4(T+4)L^{2} \int_{0}^{t} \Mean_{n}\left[ \bigl|X^{n}_{i}(s) - \tilde{X}^{n}_{i}(s)\bigr|^{2} + \frac{1}{n}\sum_{k=1}^{n} \bigl|X^{n}_{k}(s) - \tilde{X}^{n}_{k}(s)\bigr|^{2} \right] ds.
\end{align*}
Similarly, but also using assumption~\hypref{HypGrowth},
\[
\begin{split}
	&\Mean_{n}\left[ \sup_{s\in [0,t]} \bigl|X^{n}_{1}(s) - \tilde{X}^{n}_{1}(s)\bigr|^{2} \right] \leq C_{n} \\
	&\qquad + 8(T+2)L^{2} \int_{0}^{t} \Mean_{n}\left[ \bigl|X^{n}_{1}(s) - \tilde{X}^{n}_{1}(s)\bigr|^{2} + \frac{1}{n}\sum_{k=1}^{n} \bigl|X^{n}_{k}(s) - \tilde{X}^{n}_{k}(s)\bigr|^{2} \right]ds,
\end{split}
\]
where $C_{n}$ is equal to
\[
	80 T K^{2} \int_{0}^{T} \Mean_{n}\left[ 1 + |X^{n}_{1}(s)|^{2} + |u^{n}_{1}(s)|^{2} + |\tilde{u}^{n}_{1}(s)|^{2} + \frac{1}{n}\sum_{k=1}^{n} |X^{n}_{k}(s)|^{2} \right] ds.
\]
It follows that, for every $t\in [0,T]$,
\begin{multline*}
	\frac{1}{n}\sum_{i=1}^{n} \Mean_{n} \left[ \sup_{s\in [0,t]} \bigl|X^{n}_{i}(s) - \tilde{X}^{n}_{i}(s)\bigr|^{2} \right] \\
	\leq \frac{C_{n}}{n} + 8(T+4)L^{2} \int_{0}^{t} \Mean_{n}\left[\frac{1}{n}\sum_{i=1}^{n} \sup_{\tilde{s}\in [0,s]}\bigl|X^{n}_{i}(\tilde{s}) - \tilde{X}^{n}_{i}(\tilde{s})\bigr|^{2} \right]ds.
\end{multline*}
Therefore, by Gronwall's lemma,
\[
	\Mean_{n} \left[ \frac{1}{n}\sum_{i=1}^{n} \sup_{t\in [0,T]} \bigl|X^{n}_{i}(t) - \tilde{X}^{n}_{i}(t)\bigr|^{2} \right] \leq \frac{C_{n}}{n} \exp\left(8T(T+4)L^{2}\right).
\]
To complete the proof of \eqref{EqConnectionOMConv}, one checks that $\sup_{n\in \mathbb{I}} C_{n} < \infty$. But this is a consequence of \eqref{EqConnectionEnergy}, condition~\eqref{CondTightness}, and Lemma~\ref{LemmaGrowthBounds}. 

\textbf{Fourth step}. We are going to show that
\begin{equation} \label{EqConnectionLimsup}
	\limsup_{n\to\infty} J^{n}_{1}(\boldsymbol{\tilde{u}}^{n}) \leq \hat{J}\left(\mathfrak{m}(0),\Theta^{\mathfrak{m}}_{\epsilon};\mathfrak{m}\right).
\end{equation}
Let $n\in \mathbb{I}$. Recall that $\tilde{X}^{n}_{1}$ solves the equation
\[
\begin{split}
	\tilde{X}^{n}_{1}(t) &= \xi^{n}_{1} + \int_{0}^{t} b\left(s,\tilde{X}^{n}_{1}(s),\tilde{\mu}^{n}(s),\tilde{u}^{n}_{1}(s)\right)ds \\
	&\quad + \int_{0}^{t} \sigma\left(s,\tilde{X}^{n}_{1}(s),\tilde{\mu}^{n}(s)\right)dW^{n}_{1}(s),\quad t\in [0,T].
\end{split}
\]
Let $\bar{X}^{n}_{1}$ be the unique solution to
\[
\begin{split}
	\bar{X}^{n}_{1}(t) &= \bar{\xi}^{n} + \int_{0}^{t} b\left(s,\bar{X}^{n}_{1}(s),\mathfrak{m}(s),\tilde{u}^{n}_{1}(s)\right)ds \\
	&\quad + \int_{0}^{t} \sigma\left(s,\bar{X}^{n}_{1}(s),\mathfrak{m}(s)\right)dW^{n}_{1}(s),\quad t\in [0,T].
\end{split}
\]
Then, by uniqueness in law and construction, for every $n\in \mathbb{I}$,
\begin{multline*}
	\hat{J}\left(\mathfrak{m}(0),\Theta^{\mathfrak{m}}_{\epsilon};\mathfrak{m}\right) \\
	= \Mean_{n}\left[ \int_{0}^{T} f\bigl(t,\bar{X}^{n}_{1}(t),\mathfrak{m}(t),\tilde{u}^{n}_{1}(t)\bigr)dt + F\bigl(\bar{X}^{n}_{1}(T),\mathfrak{m}(T)\bigr)\right].
\end{multline*}
Using assumption~\hypref{HypLipschitz}, H{\"o}lder's inequality, It{\^{o}}'s isometry, and Fubini's theorem, we find that for every $t\in [0,T]$,
\begin{align*}
	&\Mean_{n}\left[ \left| \tilde{X}^{n}_{1}(t) - \bar{X}^{n}_{1}(t) \right|^{2} \right] \\
	\begin{split}
	&\leq 3\Mean_{n}\left[ \left|\xi^{n}_{1} - \bar{\xi}^{n} \right|^{2} \right] + 6(T+1)L^{2} \Mean_{n}\left[ \int_{0}^{T} \mathrm{d}_{2}\left(\tilde{\mu}^{n}(s), \mathfrak{m}(s)\right)^{2}\,ds \right] \\
	&\quad + 6(T+1)L^{2} \int_{0}^{t} \Mean_{n}\left[ \left| \tilde{X}^{n}_{1}(s) - \bar{X}^{n}_{1}(s) \right|^{2} \right] ds.
\end{split}
\end{align*}
The limit relation \eqref{EqConnectionOMConv} implies that $(\tilde{\mu}^{n}(0))_{n\in\mathbb{I}}$ converges to $\mathfrak{m}(0)$ in distribution as $\prbms[2]{\mathbb{R}^{d}}$-valued random variables and that
\[
	\sup_{t\in [0,T]} \Mean_{n}\left[ \mathrm{d}_{2}\left(\tilde{\mu}^{n}(t), \mathfrak{m}(t)\right)^{2} \right] \stackrel{n\to\infty}{\longrightarrow} 0.
\]
By choice of the random variables $\bar{\xi}^{n}$ according to Lemma~\ref{LemmaCoupling},
\[
	\Mean_{n}\left[ \left|\xi^{n}_{1} - \bar{\xi}^{n} \right|^{2} \right] \stackrel{n\to\infty}{\longrightarrow} 0.
\]
Therefore, by Gronwall's lemma,
\[
	\sup_{t\in [0,T]} \Mean_{n}\left[ \left| \tilde{X}^{n}_{1}(t) - \bar{X}^{n}_{1}(t) \right|^{2} \right] \stackrel{n\to\infty}{\longrightarrow} 0.
\]
Thanks to assumption~\hypref{HypCostLip} and H{\"o}lder's inequality,
\begin{align*}
	& \left| J^{n}_{1}(\boldsymbol{\tilde{u}}^{n}) - \hat{J}\left(\mathfrak{m}(0),\Theta^{\mathfrak{m}}_{\epsilon};\mathfrak{m}\right) \right| \\
\begin{split}
	&\leq \Mean_{n}\left[ \int_{0}^{T} \left| f\bigl(t,\tilde{X}^{n}_{1}(t),\tilde{\mu}^{n}(t),\tilde{u}^{n}_{1}(t)\bigr) - f\bigl(t,\bar{X}^{n}_{1}(t),\mathfrak{m}(t),\tilde{u}^{n}_{1}(t)\bigr)\right| dt \right] \\
	&\quad + \Mean_{n}\left[ \left| F\bigl(\tilde{X}^{n}_{1}(T),\tilde{\mu}^{n}(T)\bigr) - F\bigl(\bar{X}^{n}_{1}(T),\mathfrak{m}(T)\bigr) \right| \right],
\end{split} \\
%\begin{split}
%	&\leq \sqrt{10} L \Mean_{n}\left[\int_{0}^{T} \left( |\tilde{X}^{n}_{1}(t) - \bar{X}^{n}_{1}(t)|^{2} + \mathrm{d}_{2}\left(\tilde{\mu}^{n}(t), \mathfrak{m}(t)\right)^{2} \right)dt\right]^{1/2} \\
%	&\quad \cdot \Mean_{n}\left[\int_{0}^{T}\left(1 + |\tilde{X}^{n}_{1}(t)|^{2} + |\bar{X}^{n}_{1}(t)|^{2} + \mathrm{d}_{2}\left(\tilde{\mu}^{n}(t), \delta_{0}\right)^{5} + \mathrm{d}_{2}\left(\mathfrak{m}(t),\delta_{0}\right)^2 \right)dt \right]^{1/2} \\
%	&\quad + \sqrt{10} L \Mean_{n}\left[|\tilde{X}^{n}_{1}(T) - \bar{X}^{n}_{1}(T)|^{2} + \mathrm{d}_{2}\left(\tilde{\mu}^{n}(T), \mathfrak{m}(T)\right)^{2}\right]^{1/2} \\
%	&\quad \cdot \Mean_{n}\left[1 + |\tilde{X}^{n}_{1}(T)|^{2} + |\bar{X}^{n}_{1}(T)|^{2} + \mathrm{d}_{2}\left(\tilde{\mu}^{n}(T), \delta_{0}\right)^{2} + \mathrm{d}_{2}\left(\mathfrak{m}(T),\delta_{0}\right)^2 \right]^{1/2}
%\end{split} \\
\begin{split}
	&\leq \sqrt{10} L(1+\sqrt{T}) \sup_{t\in [0,T]}\Mean_{n}\left[|\tilde{X}^{n}_{1}(t) - \bar{X}^{n}_{1}(t)|^{2} + \mathrm{d}_{2}\left(\tilde{\mu}^{n}(t), \mathfrak{m}(t)\right)^{2} \right]^{1/2} \\
	&\quad \cdot \sup_{t\in [0,T]}\Mean_{n}\left[1 + |\tilde{X}^{n}_{1}(t)|^{2} + |\bar{X}^{n}_{1}(t)|^{2} + \mathrm{d}_{2}\left(\tilde{\mu}^{n}(t), \delta_{0}\right)^{2} + \mathrm{d}_{2}\left(\mathfrak{m}(t),\delta_{0}\right)^2 \right]^{1/2}.
\end{split}
\end{align*}
By \eqref{EqConnectionEnergy} together with Lemma~\ref{LemmaGrowthBounds} and an analogous estimate applied to $\bar{X}^{n}_{1}$, and since $\sup_{t\in [0,T]}\mathrm{d}_{2}\left(\mathfrak{m}(t),\delta_{0}\right)^2 < \infty$ by continuity, we have
\[
	\sup_{n\in \mathbb{I}} \sup_{t\in [0,T]}\Mean_{n}\left[|\tilde{X}^{n}_{1}(t)|^{2} + |\bar{X}^{n}_{1}(t)|^{2} + \mathrm{d}_{2}\left(\tilde{\mu}^{n}(t), \delta_{0}\right)^{2} + \mathrm{d}_{2}\left(\mathfrak{m}(t),\delta_{0}\right)^2 \right] < \infty.
\]
It follows that $J^{n}_{1}(\boldsymbol{\tilde{u}}^{n}) \to \hat{J}\left(\mathfrak{m}(0),\Theta^{\mathfrak{m}}_{\epsilon};\mathfrak{m}\right)$ as $n\to\infty$, which establishes \eqref{EqConnectionLimsup}.

\textbf{Fifth step}. The limit relation \eqref{EqConnectionLimsup} and the choice of $\Theta^{\mathfrak{m}}_{\epsilon}$ imply that
\[
	\limsup_{j\to\infty} J^{N_{j}}_{1}\left(\boldsymbol{\tilde{u}}^{N_{j}}\right) \leq \hat{V}(\mathfrak{m}(0);\mathfrak{m}) + \epsilon.
\]
By hypothesis, $\boldsymbol{u}^{n}$ is a local $\epsilon_{n}$-Nash equilibrium. By construction, $\boldsymbol{\tilde{u}}^{n}$ differs from $\boldsymbol{u}^{n}$ only in component number one ($= i^{n}_{\ast}$), and $\tilde{u}^{n}_{1}$ is $(\mathcal{F}^{n,1}_{t})$-adapted. Therefore,
\[
	J^{n}_{1}\left(\boldsymbol{u}^{n}\right) \leq J^{n}_{1}\left(\boldsymbol{\tilde{u}}^{n}\right) + \epsilon_{n}.
\]
By choice of the index $1 = i^{n}_{\ast}$ according to \eqref{CondCostSymmetry} and since $\epsilon_{n}\to 0$ by hypothesis,
\[
	\limsup_{n\to\infty} \frac{1}{n} \sum_{i=1}^{n} J^{n}_{i}\left(\boldsymbol{u}^{n}\right) \leq \limsup_{n\to\infty} J^{n}_{1}\left(\boldsymbol{u}^{n}\right) \leq \limsup_{n\to\infty} J^{n}_{1}\left(\boldsymbol{\tilde{u}}^{n}\right).
\]
It follows that
\[
	\limsup_{n\to\infty} \frac{1}{n} \sum_{i=1}^{n} J^{n}_{i}\left(\boldsymbol{u}^{n}\right) \leq \hat{V}(\mathfrak{m}(0);\mathfrak{m}) + \epsilon.
\]
On the other hand, thanks to the second part of Lemma~\ref{LemmaConvergence}, 
\[
	\liminf_{n\to\infty} \frac{1}{n} \sum_{i=1}^{n} J^{n}_{i}\left(\boldsymbol{u}^{n}\right) \geq \int_{\Omega} \hat{J}\bigl(\mathfrak{m}(0),Q_{\omega},\mathfrak{m}\bigr) \Prb(d\omega).
\]
It follows that 
\[
	\int_{\Omega} \hat{J}\bigl(\mathfrak{m}(0),Q_{\omega},\mathfrak{m}\bigr) \Prb(d\omega) \leq \hat{V}(\mathfrak{m}(0);\mathfrak{m}) + \epsilon.
\]
Since $\epsilon > 0$ was arbitrary and $\hat{J}(\mathfrak{m}(0),Q_{\omega},\mathfrak{m}) \geq \hat{V}(\mathfrak{m}(0);\mathfrak{m})$ for every $\omega\in \Omega$ by definition of $\hat{V}$, we conclude that
\[
	\hat{J}\bigl(\mathfrak{m}(0),Q_{\omega},\mathfrak{m}\bigr) = \hat{V}(\mathfrak{m}(0);\mathfrak{m})\quad \text{for $\Prb$-almost all }\omega\in \Omega.
\]
\end{proof}

\begin{rem} \label{RemRandomFlow}
	The proof of Theorem~\ref{ThConnection} gives some insight into why the assumption that the limit flow of measures $\mathfrak{m}$ is deterministic cannot simply be dropped. In the second step of the proof, we define a competitor strategy $\tilde{u}^{n}_{1}$ for the deviating player (player one after relabeling) in terms of the noise feedback function $\psi^{\mathfrak{m}}_{\epsilon}$. In general, for any $t\in [0,T]$, $\psi^{\mathfrak{m}}_{\epsilon}(t,\cdot,\cdot)$ depends on $\mathfrak{m}$ through its values for all times, not only through its values up to time $t$. Therefore, if $\mathfrak{m}$ were random, even taking for granted the measurable dependence of $\psi^{\mathfrak{m}}_{\epsilon}$ on $\mathfrak{m}$, we might end up with a non-adapted competitor strategy. Indeed, the natural choice for $\tilde{u}^{n}_{1}$, namely $\tilde{u}^{n}_{1}(t,\omega)\doteq \psi^{\mu^{n}_{\omega}(\cdot)}_{\epsilon}\left(t,\bar{\xi}^{n}(\omega),W^{n}_{1}(\cdot,\omega)\right)$, would in general yield a $\Gamma$-valued process that would not be an admissible strategy for player one in the $n$-player game.  
\end{rem}

%-------

\begin{appendix}

\section*{Appendix}

\section{Proof of Lemma~\ref{LemmaMPCharacterization}, second part}
\label{AppMPCharacterization}

Let $\Theta\in \prbms{\mathcal{Z}}$ be a solution of Eq.~\eqref{EqLimitDynamicsRel} with flow of measures $\mathfrak{m}$ in the sense of Definition~\ref{DefSolution}. Using the local martingale property of $M^{\mathfrak{m}}_{f}$ for $f$ a monomial of first or second order as in the proof of Proposition~5.4.6 in \citet[pp.\,315-316]{karatzasshreve91}, we find that, under $\Theta$ and with respect to the filtration $(\mathcal{G}_{t})$:
\begin{itemize}
	\item $\hat{W}$ is a $d_{1}$-dimensional vector of continuous local martingales with $\hat{W}(0) = 0$ and quadratic covariations
	\[
		\bigl\langle \hat{W}_{l}, \hat{W}_{\tilde{l}} \bigr\rangle(t) = t\cdot \delta_{l,\tilde{l}},\quad l,\tilde{l}\in \{1,\ldots,d_{1}\};
	\] 
	
	\item $\bar{X}\doteq \hat{X} - \hat{X}(0) - \int_{\Gamma\times[0,\cdot]} b\bigl(s,\hat{X}(s),\mathfrak{m}(s),\gamma\bigr)\hat{\rho}(d\gamma,ds)$ is a $d$-dimensional vector of continuous local martingales with quadratic covariations
	\[
		\bigl\langle \bar{X}_{j}, \bar{X}_{k} \bigr\rangle(t) = \int_{0}^{t} (\sigma\trans{\sigma})_{jk}\bigl(s,\hat{X}(s),\mathfrak{m}(s)\bigr)ds,\quad j,k\in \{1,\ldots,d\};
	\]
	
	\item $\hat{W}$, $\bar{X}$ have quadratic covariations
	\[
		\bigl\langle \bar{X}_{k}, \hat{W}_{l} \bigr\rangle(t) = \int_{0}^{t} \sigma_{kl}\bigl(s,\hat{X}(s),\mathfrak{m}(s)\bigr)ds,
	\]
	where $k\in \{1,\ldots,d\}$, $l\in \{1,\ldots,d_{1}\}$.
\end{itemize}
The local martingale property also holds with respect to the filtration $(\mathcal{G}^{\Theta}_{t+})$; see the solution to Problem~5.4.13 in \citet[pp.\,318-319,\,392]{karatzasshreve91} and Remark~4.2 in \citet{budhirajaetal12}. By L{\'e}vy's characterization of Brownian motion \citep[for instance, Theorem~3.3.16 in][p.\,157]{karatzasshreve91}, we see that $\hat{W}$ is a standard Wiener process with respect to $(\mathcal{G}^{\Theta}_{t+})$. As a consequence, the process
\[
	Y(t)\doteq \int_{0}^{t} \sigma(s,\hat{X}(s),\mathfrak{m}(s)\bigr)d\hat{W}(s),\quad t\in [0,T],
\]
is well defined and a $d$-dimensional vector of continuous local martingales
(under $\Theta$ with respect to $(\mathcal{G}^{\Theta}_{t+})$) with quadratic covariations
\begin{align*}
	\bigl\langle Y_{j}, Y_{k} \bigr\rangle(t) &= \int_{0}^{t} (\sigma\trans{\sigma})_{jk}\bigl(s,\hat{X}(s),\mathfrak{m}(s)\bigr)ds, &j,k\in \{1,\ldots,d\},& \\
	\bigl\langle Y_{j}, \hat{W}_{l} \bigr\rangle(t) &= \int_{0}^{t}\sigma_{jl}\bigl(s,\hat{X}(s),\mathfrak{m}(s)\bigr)ds, &j\in \{1,\ldots,d\},\; l\in \{1,\ldots,d_{1}\}.&
\end{align*}
The quadratic covariations between the components of the vectors of continuous local martingales $\bar{X}$, $Y$ are given by \citep[cf.\ Proposition~3.2.24 in][p.\,147]{karatzasshreve91} 
\begin{align*}
	\bigl\langle Y_{j}, \bar{X}_{k}\bigr\rangle(t) &= \sum_{l=1}^{d_{1}} \int_{0}^{t} \sigma_{jl}\bigl(s,\hat{X}(s),\mathfrak{m}(s)\bigr) d\bigl\langle \bar{X}_{k}, \hat{W}_{l} \bigr\rangle(s) \\
%	&= \sum_{l=1}^{d_{1}} \int_{0}^{t} \sigma_{jl}\bigl(s,\hat{X}(s),\mathfrak{m}(s)\bigr)\cdot \sigma_{kl}\bigl(s,\hat{X}(s),\mathfrak{m}(s)\bigr) ds,\\
	&= \int_{0}^{t} (\sigma\trans{\sigma})_{jk}\bigl(s,\hat{X}(s),\mathfrak{m}(s)\bigr)ds, \quad j,k\in \{1,\ldots,d\}.
\end{align*}
It follows that $\bar{X} - Y$ is a $d$-dimensional vector of continuous local martingales with $\bar{X}(0) = 0 = Y(0)$ and quadratic covariations
\[
	\bigl\langle \bar{X}_{j} - Y_{j}, \bar{X}_{k} - Y_{k} \bigr\rangle = \bigl\langle \bar{X}_{j}, \bar{X}_{k}\bigr\rangle - \bigl\langle Y_{j}, \bar{X}_{k}\bigr\rangle - \bigl\langle \bar{X}_{j}, Y_{k} \bigr\rangle + \bigl\langle Y_{j}, Y_{k} \bigr\rangle \equiv 0.
\]
This implies \citep[cf.\ Problem~1.5.12 in][p.\,35]{karatzasshreve91} that $\bar{X} = Y$ $\Theta$-almost surely, which establishes the solution property.

%-----

\section{Tightness functions}
\label{AppTightnessFncts}

Let $\mathcal{S}$ be a Polish space. A function $g\!: \mathcal{S} \rightarrow [0,\infty]$ is called a \emph{tightness function} on $\mathcal{S}$ if it is measurable and its sublevel sets $\{s\in \mathcal{S}: g(s) \leq c\}$ are pre-compact in $\mathcal{S}$ for all $c\in [0,\infty)$. If $g$ is a tightness function on $\mathcal{S}$, then the function $\prbms{\mathcal{S}} \ni \Theta \mapsto \int_{\mathcal{S}} g(s)\,\Theta(ds) \in [0,\infty]$ is a tightness function on $\prbms{\mathcal{S}}$; see, for instance, Theorem~A.3.17 in \citet[p.\,309]{dupuisellis97}.

\subsection{A tightness function on $\mathcal{R}_{2}$}
\label{AppTightnessFnctControls}

Let $\delta_{0} > 0$. Define a function $\tilde{g}\!: \mathcal{R}_{2}\rightarrow [0,\infty]$ by
\[
	\tilde{g}(r)\doteq \int_{\Gamma\times [0,T]} |\gamma|^{2+\delta_{0}} r(d\gamma,dt).
\]
We check that $\tilde{g}$ is a tightness function on $\mathcal{R}_{2}$. By construction, $\tilde{g}$ is measurable. For $c\in [0,\infty)$, set
\[
	A_{c}\doteq \left\{ r\in \mathcal{R}_{2} : \tilde{g}(r)\leq c \right\}.
\]
Fix $c\in [0,\infty)$. Then we have to show that $A_{c}$ is pre-compact in $\mathcal{R}_{2}$. This is equivalent to showing that
\begin{enumerate}[a)]
	\item $A_{c}$ is pre-compact in $\mathcal{R}$,
	\item if $(r_{n})_{n\in \mathbb{N}} \subset A_{c}$ is such that $r_{n} \to r$ in $\mathcal{R}$ for some $r\in \mathcal{R}$, then $r\in \mathcal{R}_{2}$ and $\int_{\Gamma\times [0,T]} |\gamma|^{2} r_{n}(d\gamma,dt) \to \int_{\Gamma\times [0,T]} |\gamma|^{2} r(d\gamma,dt)$ as $n\to\infty$.
\end{enumerate}
Pre-compactness of $A_{c}$ in $\mathcal{R}$ is equivalent to tightness of $A_{c}$. This holds since, for every $M > 0$, the set $\{\gamma\in \Gamma : |\gamma| \leq M\}$ is compact (by assumption~\hypref{HypCostCoercivity}, $\Gamma$ is closed) and, by Markov's inequality,
\[
	\sup_{r\in A_{c}} r\left\{ (\gamma,t)\in \Gamma\times [0,T] : |\gamma| > M \right\} \leq \frac{1}{M^{2+\delta_{0}}}\cdot \sup_{r\in A_{c}} \tilde{g}(r) \leq \frac{c}{M^{2+\delta_{0}}},
\] 
which tends to zero as $M \to\infty$.

As to the convergence of moments, let $(r_{n})_{n\in \mathbb{N}} \subset A_{c}$ be such that $r_{n} \to r$ in $\mathcal{R}$ for some $r\in \mathcal{R}$. Then, by Fatou's lemma and H{\"o}lder's inequality,
\[
	\liminf_{n\to\infty} \int_{\Gamma\times [0,T]} |\gamma|^{2} r_{n}(d\gamma,dt) \geq \int_{\Gamma\times [0,T]} |\gamma|^{2} r(d\gamma,dt),
\]
hence $r\in A_{c}\subset \mathcal{R}_{2}$. By convergence in $\mathcal{R}$, we have, for every $M > 0$,
\[
	\lim_{n\to\infty} \int_{\Gamma\times [0,T]} |\gamma|^{2}\wedge M\; r_{n}(d\gamma,dt) = \int_{\Gamma\times [0,T]} |\gamma|^{2}\wedge M\; r(d\gamma,dt).
\]
On the other hand, again by H{\"o}lder's and Markov's inequality, for every $n\in \mathbb{N}$, every $M > 0$,
\begin{align*}
	& \int_{\Gamma\times [0,T]} |\gamma|^{2}\cdot \mathbf{1}_{[M,\infty)}\left(|\gamma|^{2}\right) r_{n}(d\gamma,dt) \\
	&\leq \left(\int_{\Gamma\times [0,T]} |\gamma|^{2+\delta_{0}} r_{n}(d\gamma,dt)\right)^{\frac{1}{2+\delta_{0}}} \cdot r_{n}\left\{ (\gamma,t)\in \Gamma\times [0,T] : |\gamma|^{2} > M \right\}^{\frac{1+\delta_{0}}{2+\delta_{0}}} \\
	&\leq c^{\frac{1}{2+\delta_{0}}} \cdot c^{\frac{1+\delta_{0}}{2+\delta_{0}}}\cdot M^{-(1+\delta_{0}/2)}.
\end{align*}
It follows that
\[
	\sup_{n\in \mathbb{N}} \int_{\Gamma\times [0,T]} |\gamma|^{2}\cdot \mathbf{1}_{[M,\infty)}\left(|\gamma|^{2}\right) r_{n}(d\gamma,dt) \stackrel{M\to\infty}{\longrightarrow} 0,
\]
hence $\lim_{n\to\infty} \int_{\Gamma\times [0,T]} |\gamma|^{2} r_{n}(d\gamma,dt) = \int_{\Gamma\times [0,T]} |\gamma|^{2} r(d\gamma,dt)$.

\subsection{A tightness function on $\prbms[2]{\mathcal{Z}$}}
\label{AppTightnessFnctMeasures}

We check that the function $g$ defined by \eqref{ExTFunctMeasures} is a tightness function on $\prbms[2]{\mathcal{Z}}$. By construction, $g$ is measurable (by continuity, the suprema appearing inside the second integral and in the definition of the modulus of continuity can be restricted to countable index sets). Thus we have to show that, given any $c\in [0,\infty)$, the set
\[
	A(c)\doteq \left\{ \Theta\in \prbms[2]{\mathcal{Z}}: g(\Theta) \leq c \right\}
\]
is pre-compact in $\prbms[2]{\mathcal{Z}}$. Fix $c\in [0,\infty)$. The pre-compactness of $A(c)$ in $\prbms[2]{\mathcal{Z}}$ is equivalent to the following two conditions:
\begin{enumerate}[a)]
	\item \label{FnctGTightness} $A(c)$ is tight in $\prbms{\mathcal{Z}}$;
	\item \label{FnctGMoments} if $(\Theta^{n})_{n\in\mathbb{N}} \subset A(c)$ is such that $\Theta^{n}$ converges to $\bar{\Theta}$ in $\prbms{\mathcal{Z}}$ for some $\bar{\Theta}\in \prbms{\mathcal{Z}}$, then $\bar{\Theta}\in \prbms[2]{\mathcal{Z}}$ and $\int_{\mathcal{Z}} \mathrm{d}_{\mathcal{Z}}(s,s_{0})^{2}\, \Theta^{n}(ds) \to \int_{\mathcal{Z}} \mathrm{d}_{\mathcal{Z}}(s,s_{0})^{2}\,\bar{\Theta}(ds)$, where $s_{0}$ is some arbitrarily fixed element of $\mathcal{Z}$. 
\end{enumerate}

To verify \ref{FnctGTightness}), it is enough to check tightness of marginals, that is, to verify that $A_{\mathcal{X}}(c)\doteq \{ [\Theta]_{\mathcal{X}} : \Theta\in A_{c}\}$ is tight in $\prbms{\mathcal{X}}$, $A_{\mathcal{R}_{2}}(c)\doteq \{ [\Theta]_{\mathcal{R}_{2}} : \Theta\in A_{c}\}$ is tight in $\prbms{\mathcal{R}_{2}}$, and $A_{\mathcal{W}}(c)\doteq \{ [\Theta]_{\mathcal{W}} : \Theta\in A_{c}\}$ is tight in $\prbms{\mathcal{W}}$, where $[\Theta]_{\mathcal{X}}$, $[\Theta]_{\mathcal{R}_{1}}$, $[\Theta]_{\mathcal{W}}$ denote the marginal distributions of $\Theta$ on $\mathcal{X}$, $\mathcal{R}_{2}$, and $\mathcal{W}$, respectively. Thanks to Markov's inequality and the Ascoli-Arzel{\`a} criterion \citep[for instance, Theorem~8.2 in][p.\,55]{billingsley68}, $A_{\mathcal{X}}(c)$, $A_{\mathcal{W}}(c)$ are tight in $\prbms{\mathcal{X}}$ and $\prbms{\mathcal{W}}$, respectively. The tightness of $A_{\mathcal{R}_{2}}(c)$ in $\prbms{\mathcal{R}_{2}}$ follows from the fact that the mapping
\[
	 \mathcal{R}_{2} \ni r\mapsto \int_{\Gamma\times [0,T]} |\gamma|^{2+\delta_{0}}\, r(d\gamma,dt) \in [0,\infty]
\]
is a tightness function on $\mathcal{R}_{2}$; see Appendix~\ref{AppTightnessFnctControls}.

In order to check \ref{FnctGMoments}), let $(\Theta^{n})_{n\in\mathbb{N}} \subset A(c)$ be such that $\Theta^{n}$ converges to $\bar{\Theta}$ in $\prbms{\mathcal{Z}}$ for some $\bar{\Theta}\in \prbms{\mathcal{Z}}$. By a version of Fatou's lemma \citep[cf.\ Theorem~A.3.12][p.\,307]{dupuisellis97},
\[
	\liminf_{n\to\infty} \int_{\mathcal{Z}} \|\phi\|_{\mathcal{X}}^{2+\delta_{0}}\, \Theta^{n}(d\phi,dr,dw) \geq \int_{\mathcal{Z}} \|\phi\|_{\mathcal{X}}^{2+\delta_{0}}\, \bar{\Theta}(d\phi,dr,dw).
\]
By definition of $\mathrm{d}_{\mathcal{Z}}$ and of $g$, and thanks to H{\"o}lder's inequality, it follows that $\Theta\in \prbms[2]{\mathcal{Z}}$. By convergence of $(\Theta^{n})_{n\in\mathbb{N}}$ to $\bar{\Theta}$ in $\prbms{\mathcal{Z}}$, we have for every $M > 0$,
\[
	\lim_{n\to\infty} \int_{\mathcal{Z}} M\wedge \|\phi\|_{\mathcal{X}}^{2}\, \Theta^{n}(d\phi,dr,dw) = \int_{\mathcal{Z}} M\wedge \|\phi\|_{\mathcal{X}}^{2}\, \bar{\Theta}(d\phi,dr,dw).
\]
It suffices to show that (recall the notation for the marginal distributions)
\[
	\limsup_{M\to\infty}\; \sup_{n\in \mathbb{N}} \int_{\mathcal{X}} \mathbf{1}_{\{\|\phi\|_{\mathcal{X}}^{2} \geq M\}}\cdot \|\phi\|_{\mathcal{X}}^{2}\, [\Theta^{n}]_{\mathcal{X}}(d\phi) = 0.
\]
But this is true by H{\"o}lder's inequality, the Markov inequality and the fact that $\sup_{n\in\mathbb{N}} g(\Theta^{n}) \leq c < \infty$ by hypothesis since
\begin{align*}
	&\sup_{n\in \mathbb{N}} \int_{\mathcal{X}} \mathbf{1}_{\{\|\phi\|_{\mathcal{X}}^{2} \geq M\}}\cdot \|\phi\|_{\mathcal{X}}^{2}\, [\Theta^{n}]_{\mathcal{X}}(d\phi) \\
	&\leq \sup_{n\in \mathbb{N}}\left\{ [\Theta^{n}]_{\mathcal{X}}\left(\left\{ \|\phi\|_{\mathcal{X}}^{2} \geq M \right\}\right)^{\frac{\delta_{0}}{2+\delta_{0}}} \cdot \left(\int_{\mathcal{X}} \|\phi\|_{\mathcal{X}}^{2+\delta_{0}}\, [\Theta^{n}]_{\mathcal{X}}(d\phi) \right)^{\frac{2}{2+\delta_{0}}} \right\} \\
	&\leq M^{-\frac{\delta_{0}}{2+\delta_{0}}}\cdot c^{\frac{2\delta_{0}}{(2+\delta_{0})^{2}}}\cdot c^{\frac{2}{2+\delta_{0}}},
\end{align*}
which tends to zero as $M\to \infty$.

%-----

\section{Proof of Lemma~\ref{LemmaConvergence}: local martingale property}
\label{AppLimitPoints}

We have to show that, for $\Prb$-almost every $\omega\in \Omega$, any $f\!: \mathbb{R}^{d}\times \mathbb{R}^{d_{1}}\rightarrow \mathbb{R}$ monomial of first or second order, $M^{\mu_{\omega}}_{f}$ is a $(\mathcal{G}_{t})$-local martingale under $Q_{\omega}$; cf.\ \eqref{DefSolutionMart} in Definition~\ref{DefSolution}. Recall that $\mu_{\omega}$ is the flow of measures in $\mathcal{M}_{2}$ induced by $Q_{\omega}$, that is, $\mu_{\omega}(t) = Q_{\omega}\circ (\hat{X}(t))^{-1}$, $t\in [0,T]$. If $\Theta\in \prbms[2]{\mathcal{Z}}$, then the flow of measures induced by $\Theta$ is in $\mathcal{M}_{2}$; cf.\ Remark~\ref{RemMcKeanVlasov} above. Thus, we may write $M^{\Theta}_{f}$ meaning the process $M^{\mathfrak{m}}_{f}$ with $\mathfrak{m}$ the flow of measures in $\mathcal{M}_{2}$ given by $\mathfrak{m}(t) \doteq \Theta\circ (\hat{X}(t))^{-1}$, $t\in [0,T]$.

We closely follow the proof of Lemma~5.2 in \citet{budhirajaetal12}. The canonical space $\mathcal{Z}$ there is slightly bigger than our $\mathcal{Z}$ here (relaxed controls in $\mathcal{R}_{1}$ instead of $\mathcal{R}_{2}$), but this causes no problems since the smaller space gives $L^{2}$-integrability of controls (instead of $L^{1}$) and we have the corresponding distributional convergence of $Q^{n}$ to $Q$ as $\prbms[2]{\mathcal{Z}}$-valued random variables; cf.\ Lemma~\ref{LemmaTightness} above.

In verifying the local martingale property of $M^{\mu_{\omega}}_{f}$, we will work with randomized stopping times. This will ensure almost sure continuity of certain mappings even if the diffusion coefficient $\sigma\trans{\sigma}$ is degenerate. The randomized stopping times live on an extension $(\hat{\mathcal{Z}},\mathcal{B}(\hat{\mathcal{Z}}))$ of the measurable space $(\mathcal{Z},\mathcal{B}(\mathcal{Z}))$ and are adapted to a canonical filtration $(\hat{\mathcal{G}}_{t})$ in $\mathcal{B}(\hat{\mathcal{Z}})$ given by
\begin{align*}
	& \hat{\mathcal{Z}}\doteq \mathcal{Z}\times[0,1], & & \hat{\mathcal{G}}_{t}\doteq \mathcal{G}_{t} \times \mathcal{B}([0,1]),\quad t \in [0,T]. &
\end{align*}
Any random object defined on $(\mathcal{Z},\mathcal{B}(\mathcal{Z}))$ also lives on $(\hat{\mathcal{Z}},\mathcal{B}(\hat{\mathcal{Z}}))$, and no notational distinction will be made. Let $\lambda$ denote the uniform distribution on $\mathcal{B}([0,1])$. Any probability measure $\Theta $ on $\mathcal{B}(\mathcal{Z})$ induces a probability measure on $\mathcal{B}(\hat{\mathcal{Z}})$ given by $\Theta \otimes \lambda$. For $k\in \mathbb{N}$, define a stopping time $\tau _{k}$ on $(\hat{\mathcal{Z}},\mathcal{B}(\hat{\mathcal{Z}}))$ with respect to the filtration $(\hat{\mathcal{G}}_{t})$ by setting, for $((\phi,r,w),a)\in \mathcal{Z}\times [0,1]$,
\begin{align*}
	\tau_{k}((\phi,r,w),a)&\doteq \inf \left\{ t\in [0,T]: v\bigl((\phi,r,w),t\bigr) \geq k+a\right\},\\
\intertext{where}
	v\bigl((\phi,r,w),t\bigr)&\doteq \int_{\Gamma\times [0,t]} |y|^{2}\; r(dy,ds) +\sup_{s\in [0,t]}|\phi(s)|+\sup_{s\in [0,t]}|w(s)|.
\end{align*}
Then, given any $\Theta\in \prbms{\mathcal{Z}}$, $\tau_{k}\nearrow T$ as $k\to \infty$ and the mapping
\begin{equation*}
	\mathcal{Z}\times [0,1]\ni ((\phi,r,w),a)\mapsto \tau_{k}((\phi,r,w),a)\in [0,T]
\end{equation*}
is continuous with probability one under $\Theta\otimes \lambda$.

Notice that if $M_{f}^{\Theta}$ is a local martingale with respect to $(\hat{\mathcal{G}}_{t})$ under $\Theta \otimes \lambda$ with localizing sequence of stopping times $(\tau_{k})_{k\in\mathbb{N}}$, then $M_{f}^{\Theta}$ is also a local martingale with respect to $(\mathcal{G}_{t})$ under $\Theta$ with localizing sequence of stopping times $(\tau_{k}(.,0))_{k\in\mathbb{N}}$; see the appendix in \citet{budhirajaetal12}. Thus, it suffices to prove the martingale property of $M_{f}^{\Theta}$ up till time $\tau_{k}$ with respect to the filtration $(\hat{\mathcal{G}}_{t})$ and the probability measure $\Theta\otimes \lambda$.

Clearly, the process $M_{f}^{\Theta}(\cdot \wedge \tau_{k})$ is a $(\hat{\mathcal{G}}_{t})$-martingale under $\Theta\otimes \lambda$ if and only if
\begin{equation} \label{EqAppConvMartingale}
	\Mean_{\Theta\otimes \lambda}\left[ \Psi \cdot \left(M_{f}^{\Theta}(t_{1}\wedge \tau_{k}) - M_{f}^{\Theta }(t_{0}\wedge \tau_{k})\right)\right] = 0
\end{equation}
for all $t_{0},t_{1}\in \lbrack 0,T]$ with $t_{0}\leq t_{1}$, and $\hat{\mathcal{G}}_{t_{0}}$-measurable $\Psi \in \mathbf{C}_{b}(\hat{\mathcal{Z}})$. To verify the martingale property of $M_{f}^{\Theta}(.\wedge \tau _{k})$, it is enough to check that \eqref{EqAppConvMartingale} holds for any countable collection of times $t_{0}$, $t_{1}$ which is dense in $[0,T]$ and any countable collection of functions $\Psi \in \mathbf{C}_{b}(\hat{\mathcal{Z}})$ that generates the (countably many) $\sigma$-algebras $\hat{\mathcal{G}}_{t_{0}}$. Recall that the collection of test functions $f$ for which a martingale property must be verified consists of just monomials of degree one or two, and hence is finite. Thus, we can choose a countable collection $\mathcal{T}\subset \mathbb{N}\times [0,T]^{2}\times \mathbf{C}_{b}(\hat{\mathcal{Z}})\times \mathbf{C}^{2}(\mathbb{R}^{d}\!\times\!\mathbb{R}^{d_{1}})$ of test parameters such that whenever $\Theta\in \prbms[2]{\mathcal{Z}}$ satisfies $\eqref{EqAppConvMartingale}$ for all $(k,t_{0},t_{1},\Psi,f)\in \mathcal{T}$, then $M^{\Theta}_{f}$ is a $(\mathcal{G}_{t})$-local martingale under $\Theta$.

Let $(k,t_{0},t_{1},\Psi ,f)\in \mathcal{T}$. Define a mapping $\Phi = \Phi_{(k,t_{0},t_{1},\Psi,f)}\!: \prbms[2]{\mathcal{Z}} \rightarrow \mathbb{R}$ by
\begin{equation*}
	\Phi(\Theta)\doteq \Mean_{\Theta \otimes \lambda}\left[ \Psi \cdot \bigl(M_{f}^{\Theta}(t_{1}\wedge \tau_{k}) - M_{f}^{\Theta}(t_{0}\wedge \tau_{k})\bigr)\right].
\end{equation*}
We claim that $\Phi$ is continuous on $\prbms[2]{\mathcal{Z}}$. To check this, take $\Theta \in \prbms[2]{\mathcal{Z}}$ and any sequence $(\Theta _{l})_{l\in \mathbb{N}}\subset \prbms[2]{\mathcal{Z}}$ that converges to $\Theta$ in $\prbms[2]{\mathcal{Z}}$. Let $\mathfrak{m}_{l}$, $l\in \mathbb{N}$, $\mathfrak{m}$ be the induced flows of measures in $\mathcal{M}_{2}$, that is, $\mathfrak{m}_{l}(t)\doteq \Theta_{l}\circ (\hat{X}(t))^{-1}$, $\mathfrak{m}(t)\doteq \Theta\circ (\hat{X}(t))^{-1}$, $t\in [0,T]$. Recall the definition of $M_{f}^{\Theta} = M_{f}^{\mathfrak{m}}$ in \eqref{ExTestMartingale} and \eqref{ExGenerator} above. By Assumption~\hypref{HypGrowth} and definition of the stopping time $\tau _{k}$, the integrand in \eqref{ExTestMartingale} is bounded. By continuity of $b$, $\sigma$ according to Assumption~\hypref{HypLipschitz}, the almost sure continuity of $\tau_{k}$ under $\Theta\otimes\lambda$, the extended mapping theorem \citep[Theorem~5.5 in][p.\,34]{billingsley68} applied to the relaxed controls in \eqref{ExTestMartingale} (plus convergence of first moments by choice of the topology on $\mathcal{R}_{2}$), and the fact that $\Psi\in \mathbf{C}_{b}(\hat{\mathcal{Z}})$, it follows that the mapping
\[
	\hat{\mathcal{Z}}\ni \hat{z} \mapsto \Psi(\hat{z}) \cdot \left( M_{f}^{\mathfrak{m}}(t_{1}\wedge \tau_{k}(\hat{z}),\hat{z}) - M_{f}^{\mathfrak{m}}(t_{0}\wedge \tau_{k}(\hat{z}),\hat{z}) \right) \in \mathbb{R}
\]
is bounded and $\Theta\otimes\lambda$-almost surely continuous. By weak convergence and the mapping theorem \citep[Theorem~5.1 in][p.\,30]{billingsley68}, it follows that 
\begin{equation} \label{EqAppConvFirstPart}
\begin{split}
	&\Mean_{\Theta_{l}\otimes \lambda}\left[ \Psi \cdot \bigl(M_{f}^{\mathfrak{m}}(t_{1}\wedge \tau _{k})-M_{f}^{\mathfrak{m}}(t_{0}\wedge \tau_{k})\bigr)\right] \\
	&\quad \stackrel{l\to \infty}{\longrightarrow}\; \Mean_{\Theta\otimes \lambda}\left[ \Psi \cdot \bigl(M_{f}^{\mathfrak{m}}(t_{1}\wedge \tau_{k}) - M_{f}^{\mathfrak{m}}(t_{0}\wedge \tau_{k})\bigr)\right].
\end{split}
\end{equation}
Since $(\Theta_{l})_{l\in\mathbb{N}}$ converges to $\Theta$ in $\prbms[2]{\mathcal{Z}}$, we have that $\{\Theta_{l} : l\in \mathbb{N}\}\cup \{\Theta \}$ is compact in $\prbms[2]{\mathcal{Z}}$. By continuity of projections, dominated convergence and the definition of $\mathrm{d}_{\mathcal{Z}}$, we have $\lim_{l\to \infty} \mathrm{d}_{2}\bigl(\mathfrak{m}_{l}(t),\mathfrak{m}(t)\bigr) = 0$ uniformly in $t\in [0,T]$. This together with Assumption~\hypref{HypLipschitz} and the construction of $\tau _{k}$ implies that
\begin{equation*}
	\sup_{t\in [0,T],\hat{z}\in \hat{\mathcal{Z}}} \left|M_{f}^{\mathfrak{m}_{l}}(t\wedge \tau_{k}(\hat{z}),\hat{z}) - M_{f}^{\mathfrak{m}}(t\wedge \tau_{k}(\hat{z}),\hat{z})\right| \stackrel{l\to \infty}{\longrightarrow} 0.
\end{equation*}
Since $\Psi$ is bounded, it follows by dominated convergence that
\[
\begin{split}
	& \left|\Mean_{\Theta_{l}\otimes \lambda}\left[ \Psi \cdot \left(M_{f}^{\mathfrak{m}}(t_{1}\wedge \tau_{k}) - M_{f}^{\mathfrak{m}}(t_{0}\wedge \tau_{k})\right) \right] \right. \\
	&\quad \left. - \Mean_{\Theta_{l}\otimes \lambda }\left[ \Psi \cdot \left(M_{f}^{\mathfrak{m}_{l}}(t_{1}\wedge \tau_{k}) - M_{f}^{\mathfrak{m}_{l}}(t_{0}\wedge \tau_{k}) \right) \right] \right| \stackrel{l\rightarrow \infty }{\longrightarrow } 0.
\end{split}
\end{equation*}
In combination with \eqref{EqAppConvFirstPart}, this implies $\Phi(\Theta_{l})\to \Phi(\Theta)$ as $l\to \infty$.

By hypothesis, the sequence $(Q^{n})_{n\in \mathbb{I}}$ of $\prbms[2]{\mathcal{Z}}$-valued random variables converges to $Q$ in distribution. Hence the mapping theorem and the continuity of $\Phi$ imply $\Phi(Q^{n}) \to \Phi(Q)$ in distribution as $n\to \infty$. Let $n\in \mathbb{I}$. By construction of $Q^{n}$ and Fubini's theorem, for every $\omega\in \Omega_{n}$, 
\begin{align*}
	\Phi (Q_{\omega}^{n})& =\Mean_{Q_{\omega}^{n}\otimes \lambda} \left[\Psi\cdot \bigl(M_{f}^{\mu_{\omega }^{n}}(t_{1}\wedge \tau_{k}) - M_{f}^{\mu_{\omega}^{n}}(t_{0}\wedge \tau_{k})\bigr)\right] \\
	\begin{split}
	&= \frac{1}{n} \sum_{i=1}^{n} \int_{0}^{1} \Psi\bigl((X^{n}_{i}(\cdot,\omega),\rho^{n,i}_{\omega},W^{n}_{i}(\cdot,\omega)),a\bigr) \\
	&\qquad \cdot\Biggl( f\bigl(X^{n}_{i}(t_{1}\wedge \tau^{n,i}_{k}(\omega,a),\omega),W^{n}_{i}(t_{1}\wedge \tau^{n,i}_{k}(\omega,a),\omega)\bigr) \\
	&\qquad  - f\bigl(X^{n}_{i}(t_{0}\wedge \tau^{n,i}_{k}(\omega,a),\omega),W^{n}_{i}(t_{0}\wedge \tau^{n,i}_{k}(\omega,a),\omega)\bigr)  \\
	&\qquad - \int_{t_{0}\wedge \tau^{n,i}_{k}(\omega,a)}^{t_{1}\wedge \tau^{n,i}_{k}(\omega,a)} \mathcal{A}^{\mu^{n}_{\omega}}_{u^{n}_{i}(s,\omega),s}(f)\bigl(X^{n}_{i}(s,\omega),W^{n}_{i}(s,\omega)\bigr)ds \Biggr) da,
	\end{split}
\end{align*}
where $\mathcal{A}$ is defined by \eqref{ExGenerator} and $\tau_{k}^{n,i}(\omega,a)$ is defined like $\tau_{k}((\phi,r,w),a)$ with $\phi$ replaced by $X^{n}_{i}(\cdot,\omega )$, $r$ replaced by $\rho_{\omega}^{n,i}$, the relaxed control corresponding to $u^{n}_{i}(\cdot,\omega)$, and $w$ replaced by $W^{n}_{i}(\cdot,\omega )$.

Let $a\in [0,1]$. By It\^{o}'s formula, it holds $\Prb_{n}$-almost surely that
\begin{align*}
\begin{split}
	&f\bigl(X^{n}_{i}(t_{1}\wedge \tau_{k}^{n,i}), W^{n}_{i}(t_{1}\wedge \tau_{k}^{n,i})\bigr) - f\bigl(X^{n}_{i}(t_{0}\wedge \tau_{k}^{n,i}),W^{n}_{i}(t_{0}\wedge  \tau_{k}^{n,i})\bigr) \\
	&\quad - \int_{t_{0}\wedge\tau_{k}^{n,i}}^{t_{1}\wedge\tau_{k}^{n,i}} \mathcal{A}^{\mu^{n}}_{u_{i}^{n}(s),s}(f)\bigl(X^{n}_{i}(s),W^{n}_{i}(s)\bigr)ds
\end{split}\\
	\begin{split}
	&= \int_{t_{0}\wedge\tau_{k}^{n,i}}^{t_{1}\wedge\tau_{k}^{n,i}} \trans{\nabla_{x}f\bigl(X^{n}_{i}(s),W^{n}_{i}(s)\bigr)} \sigma\bigl(s,X^{n}_{i}(s),\mu^{n}(s)\bigr) dW^{n}_{i}(s) \\
	&\quad + \int_{t_{0}\wedge\tau_{k}^{n,i}}^{t_{1}\wedge\tau_{k}^{n,i}} \trans{\nabla_{y}f\bigl(X^{n}_{i}(s),W^{n}_{i}(s)\bigr)} dW^{n}_{i}(s),
	\end{split}
\end{align*}
where $\tau_{k}^{n,i}=\tau_{k}^{n,i}(\cdot,a)$ and $\tau_{k}^{n,i}$, $\mu^{n}$, $X^{n}_{i}$, $u^{n}_{i}$ all live on $(\Omega_{n},\mathcal{F}^{n})$. By Fubini's theorem and Jensen's inequality, it follows that
\begin{align*}
	&\Mean_{n}\left[ \Phi(Q^{n})^{2}\right] \\
	&\leq \int_{0}^{1} \Mean_{n}\left[ \Mean_{Q_{\omega }^{n}}\left[ \Psi (\cdot,a)\cdot \left( M_{f}^{Q^{n}_{\omega}}(t_{1}\wedge \tau_{k}(\cdot,a)) - M_{f}^{Q^{n}_{\omega}}(t_{0}\wedge \tau_{k}(\cdot,a))\right) \right]^{2}\right] da.
\end{align*}
Let again $a\in [0,1]$. By the It{\^o} isometry, the independence of the Wiener processes $W^{n}_{1},\ldots, W^{n}_{n}$, and because $\Psi(\cdot,a)$ is $\mathcal{G}_{t_{0}}$-measurable and $\tau_{k}(\cdot,a)$ is a stopping time with respect to $(\mathcal{G}_{t})$, it holds that
\begin{align*}
	& \Mean_{n}\left[ \Mean_{Q^{n}_{\omega}}\left[ \Psi (.,a)\cdot \left( M_{f}^{Q^{n}_{\omega}}(t_{1}\wedge \tau_{k}(\cdot,a)) - M_{f}^{Q^{n}_{\omega}}(t_{0}\wedge \tau_{k}(\cdot,a))\right) \right]^{2}\right] \\
	&=\begin{aligned}[t] 
	\Mean_{n}\Biggl[ \Bigl( \frac{1}{n} \sum_{i=1}^{n} \int_{t_{0}\wedge\tau^{n,i}_{k}(\cdot,a)}^{t_{1}\wedge\tau^{n,i}_{k}(\cdot,a)} \Psi(\cdot,a)\cdot \mathbf{1}_{\{\tau^{n,i}_{k}(\cdot,a)\geq t_{0}\}}\cdot \Bigl( \trans{\nabla_{y}f\bigl(X^{n}_{i}(s),W^{n}_{i}(s)\bigr)} \\
	+ \trans{\nabla_{x}f\bigl(X^{n}_{i}(s),W^{n}_{i}(s)\bigr)} \sigma\bigl(s,X^{n}_{i}(s),\mu^{n}(s)\bigr)\Bigr) dW^{n}_{i}(s) \Bigr)^{2} \Biggr]
	\end{aligned}\\
	&=\begin{aligned}[t] 
	\frac{1}{n^{2}} \sum_{i=1}^{n} \Mean_{n}\Biggl[ \int_{t_{0}\wedge\tau^{n,i}_{k}(\cdot,a)}^{t_{1}\wedge\tau^{n,i}_{k}(\cdot,a)} \Big| \Psi(\cdot,a)\cdot \mathbf{1}_{\{\tau^{n,i}_{k}(\cdot,a)\geq t_{0}\}}\cdot \Bigl( \trans{\nabla_{y}f\bigl(X^{n}_{i}(s),W^{n}_{i}(s)\bigr)} \\
	+ \trans{\nabla_{x}f\bigl(X^{n}_{i}(s),W^{n}_{i}(s)\bigr)} \sigma\bigl(s,X^{n}_{i}(s),\mu^{n}(s)\bigr)\Bigr) \Big|^{2} ds \Biggr]
	\end{aligned}\\
	& \stackrel{n\to \infty }{\longrightarrow} 0.
\end{align*}

Since $(\Phi(Q^{n}))_{n\in\mathbb{I}}$ converges to $\Phi(Q)$ in distribution, it follows that for each $(k,t_{0},t_{1},\Psi,f) \in \mathcal{T}$ we can choose a set $Z_{(k,t_{0},t_{1},\Psi,f)} \in \mathcal{F}$ such that $\Prb(Z_{(k,t_{0},t_{1},\Psi,f)}) = 0$ and
\[
	\Phi(Q_{\omega}) = \Phi_{(k,t_{0},t_{1},\Psi,f)}(Q_{\omega}) = 0 \text{ for all } \omega \in \Omega \setminus Z_{(k,t_{0},t_{1},\Psi,f)}.
\]
Let $Z$ be the union of all sets $Z_{(k,t_{0},t_{1},\Psi,f)}$, $(k,t_{0},t_{1},\Psi,f) \in \mathcal{T}$. Since $\mathcal{T}$ is countable, we have $Z \in \mathcal{F}$, $\Prb(Z) = 0$ and
\begin{equation*}
	\Phi_{(k,t_{0},t_{1},\Psi,f)}(Q_{\omega}) = 0 \quad\text{for all } \omega \in \Omega \setminus Z,\text{ all } (k,t_{0},t_{1},\Psi,f) \in \mathcal{T}.
\end{equation*}
By definition of $\Phi$, this implies that, for every test function $f$, $M^{\mu_{\omega}}_{f}$ is a $(\mathcal{G}_{t})$-local martingale under $Q_{\omega}$ for $\Prb$-almost every $\omega\in \Omega$.

\end{appendix}

%Bibliography
\bibliographystyle{abbrvnat}
%\bibliography{MathLit}

\end{document}